\setlist[enumerate,1]{label=\rm(\arabic*)}
\setlist[enumerate,2]{label=\rm(\alph*)}
\setlist[enumerate,3]{label=\rm(\roman*)}
\newcommand{\iso}{\cong}
\newcommand{\eg}{\textit{e.g.}}
\newcommand{\viz}{\textit{viz.}}
\newcommand{\ignore}[1]{\relax}
\newcommand{\bZ}{\mathbb{Z}}
\newcommand{\nbd}{\nobreakdash}
\newcommand{\id}{\ensuremath{\mathrm{id}}}
\newcommand{\inc}{\ensuremath{\mathrm{inc}}}
\newcommand{\tensor}{\otimes}
\newcommand{\FP}{{\upshape{(FP)}}}
\newcommand{\cT}[1]{\ce{^{$\R$}\mathbf{Nil}^{#1}(\Rd{0})}}
\newcommand{\nix}{\, \hbox{-} \,}
\numberwithin{equation}{section}
\newtheorem{theorem}[equation]{Theorem}
\newtheorem{corollary}[equation]{Corollary}
\newtheorem{proposition}[equation]{Proposition}
\newtheorem{lemma}[equation]{Lemma}
\theoremstyle{definition}
\newtheorem{definition}[equation]{Definition}
\newtheorem{remark}[equation]{Remark}
\newtheorem{example}[equation]{Example}
\let\oldtocsubsection=\tocsubsection
\renewcommand{\tocsubsection}[2]{\hspace{3.5em}\(\cdot\)~\oldtocsubsection{#1}{#2}}
\newcommand{\cC}{\ch\vb}
\newcommand{\cK}{\mathcal{K}}
\newcommand{\cS}{\mathcal{S}}
\newcommand{\cX}{\mathcal{X}}
\newcommand{\cY}{\mathcal{Y}}
\newcommand{\cZ}{\mathcal{Z}}
\newcommand{\inv}{^{-1}}
\DeclareMathOperator{\im}{im}
\newcommand{\sdot}{\cS_{\bullet}}
\newcommand{\R}{R}
\newcommand{\Rd}[1]{\R_{#1}}
\newcommand{\Rpn}[1]{\R_{\geq {#1}}}
\newcommand{\Rp}{\Rpn{0}}
\newcommand{\Rnn}[1]{\R_{\leq {#1}}}
\newcommand{\Rn}{\Rnn{0}}
\newcommand{\bP}{\mathbb{P}}
\newcommand{\pp}{\bP^{1}}
\newcommand{\ch}{\mathrm{Ch}^\flat\,}
\newcommand{\vb}{\mathrm{Vect}(\pp)}
\newcommand{\rbPr}[1]{\mathrm{\bf P}(#1)}   
\newcommand{\chpr}[1]{\ch \rbPr{#1}}
\newcommand{\chp}{\chpr{R_0}}
\newcommand{\Mod}[1]{\mathrm{Mod}\text{\rm -}#1}
\DeclareMathOperator{\torus}{\mathfrak{H}}
\DeclareMathOperator{\coker}{coker}
\DeclareMathOperator{\cyl}{cyl}
\DeclareMathOperator{\cone}{cone}
\DeclareMathOperator*{\dirlim}{colim}
\newcommand{\invlim}{\lim\limits_{\leftarrow}}
\newcommand{\tw}[3]{#1(#2,\,#3)}
\newcommand{\OO}[2]{\tw{\mathcal{O}} {#1} {#2}}
\newcommand{\kay}{\prescript{R}{}{K_{-1}}}
\newcommand{\NK}[2]{\ce{^{$R$}{\mathrm{NK}}^{$#1$}_{$#2$}}}
\newcommand{\nil}[2]{\ce{^{$R$}{\mathrm{Nil}}^{$#1$}_{$#2$}}}
\newcommand{\sker}[2]{\ce{^{$R$}_{ker}$K$_{$#2$}}}
\newcommand{\scoker}[2]{\ce{^{$R$}_{coker}$K$_{$#2$}}}
\newcommand{\sd}{\mathrm{sd}}
\newcommand{\qand}{\quad\text{and}\quad}
\newcommand{\qqand}{\qquad\text{and}\qquad}
\renewcommand{\rho}{\varrho}
\renewcommand{\theta}{\vartheta}
\begin{document}

\title[The fundamental theorem for strongly $\bZ$-graded rings]%
{The ``fundamental theorem'' for the\\algebraic $K$-theory %
  of strongly $\bZ$-graded rings}

\date{\today}

\author{Thomas H\"uttemann}

\address{Thomas H\"uttemann\\ Queen's University Belfast\\ School of
  Mathematics and Physics\\ Mathematical Sciences Research Centre\\ Belfast
  BT7~1NN\\ UK}

\email{t.huettemann@qub.ac.uk}

\urladdr{https://t-huettemann.github.io/}
\urladdr{https://pure.qub.ac.uk/en/persons/thomas-huettemann}

\subjclass[2010]{Primary 19D50; Secondary 19D35 16E20 18G35}

\begin{abstract}
  The ``fundamental theorem'' for algebraic $K$-theory expresses the
  $K$-groups of a \textsc{Laurent} polynomial ring $L[t,t\inv]$ as a
  direct sum of two copies of the $K$-groups of~$L$ (with a degree
  shift in one copy), and certain ``nil'' groups of~$L$. It is shown
  here that a modified version of this result generalises to strongly
  $\bZ$\nbd-graded rings; rather than the algebraic $K$-groups of~$L$,
  the splitting involves groups related to the shift actions on the
  category of $L$-modules coming from the graded structure. (These
  actions are trivial in the classical case). The nil groups are
  identified with the reduced $K$-theory of homotopy nilpotent twisted
  endomorphisms, and analogues of \textsc{Mayer}-\textsc{Vietoris} and
  localisation sequences are established.
\end{abstract}

\maketitle

\tableofcontents

\section*{Introduction}

\subsection*{The fundamental theorem for the algebraic $K$-theory of rings}

The ``fundamental theorem'', also know as the
\textsc{Bass}-\textsc{Heller}-\textsc{Swan} formula, expresses the
algebraic $K$-groups of a \textsc{Laurent} polynomial ring
$\Rd{0}[t,t\inv]$ as a direct sum
\begin{equation}
  K_{q}\big( \Rd{0}[t,t\inv] \big) \iso K_{q} \Rd{0} \oplus
  K_{q-1}(\Rd{0}) \oplus \NK+q(\Rd{0}) \oplus
  \NK-q(\Rd{0}) \ .\label{eq:classical}
\end{equation}

It will be shown that this result largely depends on the
structure of $\Rd{0}[t,t\inv]$ as a $\bZ$\nbd-graded ring, and that a
similar splitting can be established in much greater generality.

By way of analogy we think of any $\bZ$\nbd-graded ring
$R = \bigoplus_{k \in \bZ} \Rd{k}$ as a substitute for a
\textsc{Laurent} polynomial ring, and consider the subrings
$\Rn = \bigoplus_{k \leq 0} \Rd{0}$ and
$\Rp = \bigoplus_{k \geq 0} \Rd{k}$ as substitutes for the polynomial
rings $\Rd{0}[t\inv]$ and~$\Rd{0}[t]$. If the ring~$\R$ is
\textit{strongly graded} in the sense that
$\Rd{k} \Rd{\ell} = \Rd{k+\ell}$ for all integers $k$ and~$\ell$, the
analogy is appropriate, and many results known for (\textsc{Laurent})
polynomial rings can be proved to hold for the more general
setting, \eg, the characterisation of finite domination \textit{via}
\textsc{Novikov} homology \cite{MR3704930}, the splitting for the
algebraic $K$\nbd-theory of the projective line \cite{P1}, and the
connection between finite domination and non-commutative localisation
\cite{more_on_fd}.

This is not idle play: the class of strongly $\bZ$\nbd-graded rings is
much bigger than the class of \textsc{Laurent} polynomial rings. One
specific example of a strongly $\bZ$\nbd-graded rings is
$K[A,B,C,D]/(AB+CD=1)$ where $K$ is a field, $\deg(A) = \deg(C) = 1$
and $\deg(B) = \deg(D) = -1$. (The only units in this ring are the
non-zero elements of~$K$ in degree~$0$.) A natural infinite family of
examples is formed by the \textsc{Leavitt} path algebras associated
with row-finite directed graphs without sink satisfying a certain
condition~$Y$ \cite[Theorem~1.3]{nystedt2020strongly}; this includes
all \textsc{Leavitt} path algebras associated with finite directed
graphs without sink.

\medbreak

Back to the fundamental theorem, the graded structure of
$R = \bigoplus_{k \in \bZ} \Rd{k}$ induces ``shift functors''
$s_{k} \colon M \mapsto M \tensor_{\Rd{0}} \Rd{k}$ on categories of
$\Rd{0}$\nbd-modules; in case of a \textit{strongly} graded ring,
these functors preserve projectivity and satisfy the relation
$s_{k} \circ s_{\ell} \iso s_{k + \ell}$. The key point is to measure
how non-trivial the resulting $\bZ$\nbd-action on algebraic
$K$\nbd-groups is, which is done by considering the kernel~$A_{q}$ and
cokernel~$B_{q}$ of the \textit{shift difference map}
\begin{displaymath}
  \sd_{*} = \id - s_{-1} \colon K_{q} \Rd{0} \rTo K_{q} \Rd{0} \ .
\end{displaymath}
It turns out that the groups~$A_{q-1}$ and~$B_{q}$ play the role of
$K_{q-1}\Rd{0}$ and $K_{q}\Rd{0}$ in the classical
formulation~\eqref{eq:classical} of the fundamental theorem. More
precisely, $K_{q}\R$ will be shown to be an extension of~$A_{q-1}$ by
a direct sum of~$B_{q}$ with two appropriately defined nil terms
(Theorem~\ref{thm:fundamental}); the nil terms are identified as the
reduced algebraic $K$-theory of categories of homotopy nilpotent
endomorphisms (Theorem~\ref{thm:NK_is_nil}).



\subsection*{Relation with other work}

The ``classical'' fundamental theorem for the higher algebraic
$K$\nbd-theory of rings has been proved by \textsc{Quillen} and
\textsc{Grayson} \cite{MR0574096}.  It has been extended from the
$K$\nbd-theory of rings to the $K$\nbd-theory of schemes by
\textsc{Thomason} and \textsc{Trobaugh} \cite[Theorem~6.6]{MR1106918},
and to the algebraic $K$\nbd-theory of spaces \cite{MR1829311} by
\textsc{Klein}, \textsc{Vogell}, \textsc{Waldhausen},
\textsc{Williams} and the author. A version for skew \textsc{Laurent}
polynomial rings has been discussed by \textsc{Yao}
\cite{MR1325783}. More recently, the result has been established by
\textsc{L\"uck} and \textsc{Steimle} for skew \textsc{Laurent}
extensions of additive categories \cite{MR3441110}, and by
\textsc{Fontes} and \textsc{Ogle} \cite{fontes2018fundamental} in the
context of $\mathbb{S}$\nbd-algebras. Most of the recent accounts
follow the pattern laid out in~\cite{MR1829311}, which is in turn
loosely based on~\cite{MR0574096}.

The present paper goes beyond previous generalisations inasmuch as it
moves the focus away from the very special case of (skew)
\textsc{Laurent} polynomial rings to the essential information
contained in the \textit{graded} structure of the ring extension
$\Rd{0} \subset \R$.

\subsection*{Structure of the paper}

The paper is structured in a way that avoids forward references in
proofs. \S1 provides an overview of notation and main results. \S2
discusses induced chain complexes. \S3 introduces finite domination,
an important finiteness condition for chain complexes. In \S\S4--5 the
``projective line'' and its $K$\nbd-theory are reviewed. \S6 is
devoted to an analysis of the ``nil terms'' in the fundamental
theorem. \S7 contains the proof that the ``fundamental square'' of the
projective line (roughly speaking relating the $K$\nbd-groups
of~$\Rn$, $\R$ and~$\Rp$ with those of the projective line) is
homotopy cartesian, which leads to a proof of the
\textsc{Mayer}-\textsc{Vietoris} sequence in~\S8 and a proof of the
fundamental theorem in~\S9. In \S10 we establish a ``localisation
sequence'' for algebraic $K$\nbd-theory, and finish the paper by
identifying the nil groups as the reduced $K$\nbd-theory of categories
of homotopy nilpotent twisted endomorphisms in~\S11.


\subsection*{Acknowledgements}


The basic ideas for this paper were developed during a research visit
of the author to Beijing Institute of Technology. Their hospitality and
financial support is greatly appreciated.

\section{Notation and main results}

\subsection*{Notation and conventions}

The word ``ring'' will always refer to an associative unital ring,
homomorphisms of rings respect the unit, and ``modules'' are
understood to be unital and right, unless otherwise specified. Let
$\R = \bigoplus_{k \in \bZ} \Rd{k}$ be a $\bZ$\nbd-graded unital ring,
so that $\Rd{k} \R_{\ell} \subseteq \Rd{k+\ell}$ for all
$k,\ell \in \bZ$.  (Here $\Rd{k} \Rd{\ell}$ is the set of finite sums
of products $xy$ with $x \in \Rd{k}$ and $y \in \Rd{\ell}$.) The
component $\Rd{0}$ is a subring of~$\R$ with the same unit element
\cite[Proposition~1.4]{MR593823}. Two further subrings of note are
\begin{displaymath}
  \Rn = \bigoplus_{k \leq 0} \Rd{k} \qquad \text{and} \qquad \Rp =
  \bigoplus_{k \geq 0} \Rd{k} \ .
\end{displaymath}
There are ring inclusions
\begin{displaymath}
  \Rn \lTo^{i^{-}} \Rd{0} \rTo^{i^{+}} \Rp %
  \quad \text{and} \quad %
  \Rn \rTo_{j^{-}} \R \lTo_{j^{+}} \Rp \ ,
\end{displaymath}
and ring homomorphisms given by projection
\begin{displaymath}
  \Rn \rTo^{p^{-}} \Rd{0} \lTo^{p^{+}} \Rp \ ;
\end{displaymath}
they satisfy the relations
\begin{displaymath}
  j^{-} \circ i^{-} = j^{+} \circ i^{+} \ , \quad%
  p^{-} \circ i^{-} = \id_{\Rd{0}}
  \qand %
  p^{+} \circ i^{+} = \id_{\Rd{0}} \ .
\end{displaymath}
These various maps are used to define induction functors for modules,
\begin{displaymath}
  i^{-}_{*} \colon P \mapsto P \tensor_{\Rd{0}} \Rn
\end{displaymath}
and its relatives $i^{+}_{*}$, $j^{\mp}_{*}$ and $p^{\mp}_{*}$; the
resulting maps on algebraic $K$-groups are denoted by the same symbols
as the functors.

\subsection*{Strongly $\bZ$-graded rings}

The $\bZ$\nbd-graded ring~$\R$ is called \textit{strongly graded} if
$\Rd{k} \Rd{\ell} = \Rd{k+\ell}$ for all $k,\ell \in \bZ$, or
equivalently, if $\Rd{1} \Rd{-1} = \Rd{0} = \Rd{-1} \Rd{1}$. This
ensures that the ring multiplication yields $\Rd{0}$\nbd-bimodule
isomorphisms $\Rd{k} \tensor_{\Rd{0}} \Rd{-k} \iso \Rd{0}$ and, more
generally, $\Rd{k} \tensor_{\Rd{0}} \Rd{\ell} \iso \Rd{k + \ell}$;
consequently, each $\Rd{k}$ is an invertible $\Rd{0}$\nbd-bimodule,
and hence a finitely generated projective (left and right)
$\Rd{0}$\nbd-module \cite[Proposition~1.6]{MR3704930}. Similarly, one
verifies \cite[Lemma~1.9]{MR3704930} that
\begin{displaymath}
  \Rnn q = \bigoplus_{k \leq q} \Rd{k} %
  \quad \text{and} \quad %
  \Rpn {-p} = \bigoplus_{k \geq -p} \Rd{k}
\end{displaymath}
are finitely generated projective (left and right) modules over $\Rn$
and~$\Rp$, respectively, for all $p,q \in \bZ$.

\subsection*{The groups $\NK\pm q (\Rd{0})$}

Let $\R$ be a $\bZ$\nbd-graded ring.

\begin{definition}
  We define the \textit{nil groups of~$\Rd{0}$ relative to~$\R$} as
  \begin{gather*}
    \NK-q (\Rd{0}) = \coker \big( i^{-}_{*} \colon K_{q} (\Rd{0}) \rTo
    K_{q} (\Rn) \big) \\
    \intertext{and} %
    \NK+q (\Rd{0}) = \coker \big( i^{+}_{*} \colon K_{q} (\Rd{0}) \rTo
    K_{q} (\Rp) \big) \ .
  \end{gather*}
\end{definition}

Thus we have a split short exact sequence
\begin{displaymath}
  0 \rTo K_{q} (\Rd{0}) \pile{\rTo^{i^{+}_{*}} \\ \lDashto_{p^{+}_{*}}}
  K_{q} (\Rp) \rTo \NK+q (\Rd{0}) \rTo 0
\end{displaymath}
resulting in isomorphisms
\begin{equation}
  \label{eq:split-nil}
  K_{q} (\Rp) \iso K_{q} (\Rd{0}) \oplus \NK+q (\Rd{0}) %
  \quad \text{and} \quad %
  \NK+q \iso %
  \ker \big( p^{+}_{*} \colon K_{q} (\Rp) \rTo K_{q} (\Rd{0}) \big)%
  \ .
\end{equation}
Of course these remarks hold for $\NK-q (\Rd{0})$
and~$K_{q} (\Rn)$ \textit{mutatis mutandis}.

\subsection*{Shifts and shift differences}

The $\bZ$\nbd-graded over-ring $\R$ of the ring~$\Rd{0}$ defines
endofunctors of the category of $\Rd{0}$\nbd-modules, the
\textit{shift functors}, by
\begin{displaymath}
  s_{j} \colon P \mapsto P \tensor_{\Rd{0}} \Rd{j} \qquad (j \in \bZ)
  \ .
\end{displaymath}
If $\R$ is \textit{strongly} graded then $\Rd{j}$ is an invertible
$\Rd{0}$\nbd-bimodule whence $s_{j}$ defines an auto-equivalence of
the category of finitely generated projective $\Rd{0}$\nbd-modules,
with resulting isomorphisms on algebraic $K$\nbd-groups
\begin{displaymath}
  s_{j} \colon K_{q} \Rd{0} \rTo K_{q} \Rd{0} \ .
\end{displaymath}

\begin{definition}
  \label{def:shift_kernel}
  Suppose that $\R$ is a strongly $\bZ$\nbd-graded ring. The $q$th
  \textit{shift difference map of $\Rd{0}$ relative to~$\R$} is
  defined as
  \begin{equation}
    \label{eq:def_sd}
    \sd_{*} = \id - s_{-1} \colon K_{q} \Rd{0} \rTo K_{q} \Rd{0}
    \ . 
  \end{equation}
  The kernel of $\sd_{*} \colon K_{q} \Rd{0} \rTo K_{q} \Rd{0}$ is
  called the $q$th \textit{shift kernel of~$\Rd{0}$ relative to~$\R$},
  and is denoted by the symbol $\sker+q \Rd{0}$. The cokernel of this
  map is called the $q$th \textit{shift cokernel of~$\Rd{0}$ relative
    to~$\R$}, and is denoted by the symbol $\scoker+q \Rd{0}$.
\end{definition}

It might be worth pointing out that we could just as well use the
shift functor $s_{1}$ in place of~$s_{-1}$; in view of the
$\Rd{0}$\nbd-bimodule isomorphism
$\Rd{-1} \tensor_{\Rd{0}} \Rd{1} \iso \Rd{0}$, this would lead to the
same description of the shift kernel, and to an isomorphic description
of the shift cokernel. In any case, the exact sequence
\begin{displaymath}
  0 \rTo \sker+q \Rd{0} \rTo K_{q} \Rd{0} \rTo[l>=2.5em]^{\mathrm{sd}_{*}}
  K_{q} \Rd{0} \rTo \scoker+q \Rd{0} \rTo 0
\end{displaymath}
determines the $q$th shift kernel and $q$th shift cokernel up to
canonical isomorphism.

\begin{remark}
  If there is an $\Rd{0}$-bimodule isomorphism $\Rd{-1} \iso \Rd{0}$
  then $\sd_{*}$ is the zero map whence
  $\sker+q \Rd{0} = \scoker+q \Rd{0} = K_{q} \Rd{0}$. This happens,
  for example, in case $\R$ is a \textsc{Laurent} polynomial ring
  $R = \Rd{0}[t,t\inv]$ with a central indeterminate~$t$.
\end{remark}

\subsection*{The first negative $K$-group}

Let $\R = \bigoplus_{k \in \bZ} \Rd{k}$ be a $\bZ$\nbd-graded ring.

\begin{definition}
  We define the first negative $K$-group of~$\Rd{0}$ relative to~$\R$ as
  \begin{displaymath}
    \kay (\Rd{0}) = \coker (j^{-}_{*} + j^{+}_{*}) \ ,
  \end{displaymath}
  where the ring inclusions $j^{-}$ and~$j^{+}$ induce the map
  \begin{displaymath}
    j^{-}_{*} + j^{+}_{*} \colon K_{0} (\Rn) \oplus K_{0} (\Rp)
    \rTo K_{0} (R) \ .
  \end{displaymath}
\end{definition}

In the case of a \textsc{Laurent} polynomial ring
$\Rd{0} \subseteq \Rd{0}[t,t\inv]$ this recovers the \textsc{Bass}
$K$\nbd-group
$K_{-1}(\Rd{0}) = \prescript{\Rd{0}[t,t\inv]}{}{K_{-1}} (\Rd{0})$.

\subsection*{The fundamental theorem}

The fundamental theorem expresses $K_{q}\big( \Rd{0}[t,t\inv] \big)$
as a direct sum of groups $K_{q}(\Rd{0})$, $K_{q-1}(\Rd{0})$,
$\NK+q(\Rd{0})$ and $\NK-q(\Rd{0})$. In the more general context of
$\Rd{0} \subseteq \R$ with $\R$ strongly $\bZ$\nbd-graded, the
fundamental result reads as follows:

\begin{theorem}[The fundamental theorem for the algebraic $K$-theory of strongly $\bZ$-graded rings]
  \label{thm:fundamental}
  Let $\R$ be a strongly $\bZ$\nbd-graded ring. There
  are short exact sequences of \textsc{abel}ian groups
  \addtocounter{equation}{-1}
  \begin{subequations}
    \begin{gather}
      \begin{multlined}
        0 \rTo \NK-q (\Rd{0}) \oplus \scoker+q (\Rd{0}) \oplus %
        \NK+q (\Rd{0}) \qquad\qquad \\[0.8em] %
        \qquad \qquad \rTo K_{q} (R) \rTo %
        \sker+{q-1} (\Rd{0}) 
        \rTo 0 \qquad \text{(for \(q>0\))}
      \end{multlined}
      \\
      \intertext{and}
      \begin{multlined}
        0 \rTo \NK-0 (\Rd{0}) \oplus \scoker+0 (\Rd{0}) \oplus %
        \NK+0 (\Rd{0}) \qquad \\[0.8em] %
        \qquad \qquad \rTo K_{0} (R) \rTo %
        \kay \Rd{0} \rTo 0 \ .
      \end{multlined}
    \end{gather}
  \end{subequations}
\end{theorem}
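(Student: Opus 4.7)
The plan is to derive both short exact sequences from the Mayer--Vietoris long exact sequence of~\S8 combined with the identification $K_{q}(\pp) \iso K_{q}(\Rd{0}) \oplus K_{q}(\Rd{0})$ from~\S\S4--5. The homotopy cartesian fundamental square of~\S7 produces an exact sequence
\begin{equation*}
  \cdots \rTo K_{q}(\pp) \rTo^{\psi_{q}} K_{q}(\Rn) \oplus K_{q}(\Rp)
  \rTo^{\chi_{q}} K_{q}(R) \rTo K_{q-1}(\pp) \rTo \cdots,
\end{equation*}
in which $\chi_{q} = j^{-}_{*} - j^{+}_{*}$ (up to sign) and $\psi_{q}$ is the pair of restriction maps to the two affine halves of the projective line. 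For every $q$ this collapses to the extension
\begin{equation*}
  0 \rTo \coker \psi_{q} \rTo K_{q}(R) \rTo \ker \psi_{q-1} \rTo 0,
\end{equation*}
so the task reduces to identifying $\coker \psi_{q}$ and $\ker \psi_{q-1}$.

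The central computation uses the splittings $K_{q}(\Rn) \iso K_{q}(\Rd{0}) \oplus \NK-q(\Rd{0})$ and $K_{q}(\Rp) \iso K_{q}(\Rd{0}) \oplus \NK+q(\Rd{0})$ of~\eqref{eq:split-nil}. Under a basis of $K_{q}(\pp)$ coming from two canonical twist classes, the image of~$\psi_{q}$ should lie entirely in the $K_{q}(\Rd{0})^{2}$ summand of the target, because each twist bundle restricts on either half to a finitely generated projective module induced from~$\Rd{0}$ via $i^{\pm}$. One basis class restricts to the standard generator on both sides, while the other picks up a grading shift on one side; the induced map $K_{q}(\Rd{0})^{2} \to K_{q}(\Rd{0})^{2}$ therefore has the form $\bigl(\begin{smallmatrix} \id & \id \\ \id & s_{-1} \end{smallmatrix}\bigr)$ (up to orientation choices). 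Elementary row and column operations---legitimate because $p^{\pm}_{*}$ splits $i^{\pm}_{*}$---reduce this to $\mathrm{diag}(\id,\, \id - s_{-1}) = \mathrm{diag}(\id,\, \sd_{*})$. Reading off kernel and cokernel now gives $\ker \psi_{q} \iso \sker+q(\Rd{0})$ and $\coker \psi_{q} \iso \NK-q(\Rd{0}) \oplus \scoker+q(\Rd{0}) \oplus \NK+q(\Rd{0})$, which upon substitution produces the first stated sequence for $q \geq 1$.

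For $q=0$ the computation of $\coker \psi_{0}$ is unchanged, but $\ker \psi_{-1}$ is not directly accessible. One further step of the Mayer--Vietoris sequence identifies it with $\coker \chi_{0} = \coker\bigl( j^{-}_{*} - j^{+}_{*} \colon K_{0}(\Rn) \oplus K_{0}(\Rp) \to K_{0}(R) \bigr)$, and absorbing the sign into one summand makes this equal to $\kay(\Rd{0})$ by definition. The main obstacle to carrying out this plan is the explicit description of the matrix of~$\psi_{q}$: one must pin down the two canonical classes in $K_{q}(\pp)$ furnished by~\S\S4--5 and verify that the diagonal entry producing the shift difference map is exactly $\id - s_{-1}$, so that the groups of Definition~\ref{def:shift_kernel} emerge with the correct shift. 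Once the matrix is in place, the remaining diagonalisation and direct-summand bookkeeping are formal.
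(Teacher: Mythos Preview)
Your proposal is correct and follows essentially the same route as the paper's proof in \S9: the paper likewise splits the \textsc{Mayer}--\textsc{Vietoris} sequence into the extensions $0 \to \coker(\eta\beta\alpha) \to K_{q}(R) \to \ker(\eta\beta\alpha) \to 0$ and then computes the map $\eta\beta\alpha$ explicitly as the diagonal matrix $\mathrm{diag}\bigl(i^{-}_{*}(\id - s_{-1}),\, i^{+}_{*}\bigr)$, exactly the diagonalisation you anticipate via row and column operations. The ``main obstacle'' you flag---pinning down the matrix of~$\psi_{q}$---is resolved in the paper by choosing the basis $\bigl(-\Psi_{-1,0}+\Psi_{0,0},\,\Psi_{0,0}\bigr)$ for $K_{q}(\pp)$ and applying the target automorphism $\eta = \bigl(\begin{smallmatrix}\id & -i^{-}_{*}p^{+}_{*}\\ 0 & \id\end{smallmatrix}\bigr)$, which amounts precisely to your elementary operations.
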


\subsection*{The Mayer-Vietoris sequence}

Let $\R$ be a strongly $\bZ$\nbd-graded ring. By analogy with
algebraic geometry, one can consider a ``projective line'' which is
obtained by ``gluing $\mathrm{spec}\,\Rn$ and~$\mathrm{spec}\,\Rp$
along their intersection~$\mathrm{spec}\,\R$''; more precisely, one
can define the analogue of the category of quasi-coherent sheaves on
the projective line as the category of certain diagrams of
modules. The projective line~$\pp$ in this sense was introduced and
its $K$\nbd-theory computed by \textsc{Montgomery} and the author
\cite{P1}; the relevant parts of the theory will be surveyed in
\S\ref{sec:p1} below.

\begin{theorem}[{\textsc{Mayer}-\textsc{Vietoris} sequence}]
  \label{thm:MV-sequence}
  Let $\R$ be a strongly $\bZ$\nbd-graded ring. There is a long exact
  sequence of algebraic $K$-groups
  \begin{displaymath}
    \begin{array}[h]{cccccc}
      &&&& \llap{\(\ldots\ldots\)\quad} \rTo^{\gamma} & K_{q+1} (R) \\[0.7em] %
      \rTo^{\delta} & K_{q} (\pp) %
        & \rTo^{\beta} & K_{q} (\Rn) \oplus K_{q} (\Rp) %
                        & \rTo^{\gamma} & K_{q} (R) \\[0.7em] %
      \rTo^{\delta} & K_{q-1} (\pp) %
        & \rTo^{\beta} & K_{q-1} (\Rn) \oplus K_{q-1} (\Rp) %
                        & \rTo^{\gamma} & K_{q-1} (R) \\[0.7em] %
      &&& \vdots \\
      \rTo^{\delta} & K_{0} (\pp) %
        & \rTo^{\beta} & K_{0} (\Rn) \oplus K_{0} (\Rp) %
                        & \rTo^{\gamma} & K_{0} (R) \\[0.7em] %
      &&{\qquad} & \rTo & \kay (\Rd{0}) & \rTo  0 \ . %
    \end{array}
  \end{displaymath}
\end{theorem}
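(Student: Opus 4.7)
The plan is to obtain the long exact sequence as a formal consequence of the homotopy cartesian square established in~\S7 (the ``fundamental square'' of the projective line), together with a bookkeeping argument that extends the resulting sequence one step beyond $K_{0}(R)$ so as to terminate in $\kay(\Rd{0})$.

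First I would recall from~\S\S4--5 that $K_{*}(\pp)$ is defined (\textit{via} the category of quasi-coherent sheaves modelled on diagrams of modules) so that there are natural maps
\begin{displaymath}
  \alpha^{\pm} \colon K_{q}(\pp) \rTo K_{q}(\R_{\pm}) \ , \qquad j^{\pm}_{*} \colon K_{q}(\R_{\pm}) \rTo K_{q}(R) \ ,
\end{displaymath}
fitting into a square whose commutativity follows from $j^{-}\circ i^{-} = j^{+}\circ i^{+}$ (the compatibility between restriction to the two affine pieces and further restriction to the intersection). The content of~\S7 is that, after passing to $K$\nbd-theory spectra, this square is homotopy cartesian. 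Granting this, standard spectrum-level homotopy pullback machinery produces a long exact \textsc{Mayer}--\textsc{Vietoris} sequence
\begin{displaymath}
  \cdots \rTo K_{q}(\pp) \rTo^{\beta} K_{q}(\Rn)\oplus K_{q}(\Rp) \rTo^{\gamma} K_{q}(R) \rTo^{\delta} K_{q-1}(\pp) \rTo \cdots
\end{displaymath}
with $\beta = (\alpha^{-}, \alpha^{+})$ and $\gamma = j^{-}_{*} - j^{+}_{*}$ (up to sign conventions), the connecting map~$\delta$ being the one produced by the homotopy fibre sequence.

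Next I would handle the tail of the sequence. The homotopy cartesian square of connective $K$\nbd-theory spectra yields a long exact sequence that \emph{a priori} only runs down to $K_{0}(\pp) \rTo K_{0}(\Rn)\oplus K_{0}(\Rp) \rTo K_{0}(R)$; there is no guarantee that $\gamma$ is surjective at~$K_{0}$. By the very definition of $\kay(\Rd{0}) = \coker(j^{-}_{*}+j^{+}_{*})$ (using the sign convention matching~$\gamma$), the cokernel of the final map is canonically isomorphic to $\kay(\Rd{0})$. Appending this cokernel on the right therefore extends exactness by one further term, producing the bottom row of the stated sequence.

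The main obstacle will be \emph{not} the formal derivation above, which is a standard consequence of having a homotopy cartesian square, but rather confirming that the square used in~\S7 is indeed the one whose vertices are the $K$-theory spectra of $\pp$, $\Rn$, $\Rp$, and~$R$, with the correct restriction/induction maps, so that the induced connecting and comparison maps agree with $\beta$, $\gamma$ and~$\delta$ as stated. In particular I would need to check that the map $K_{0}(\Rn)\oplus K_{0}(\Rp) \rTo K_{0}(R)$ coming out of the homotopy pullback genuinely equals $j^{-}_{*} + j^{+}_{*}$ (up to the sign needed to make the \textsc{Mayer}-\textsc{Vietoris} diagram exact), so that its cokernel is $\kay(\Rd{0})$ on the nose; this is a matter of inspecting the definition of the structure maps on the spectrum level in~\S\S4--5 and~\S7, and of pinning down conventions.
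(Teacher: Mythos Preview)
Your high-level strategy (homotopy cartesian square $\Rightarrow$ long exact sequence, then append the cokernel at the end) matches the paper's, but you have underestimated the ``main obstacle'' you flag at the end. The fundamental square~\eqref{eq:fund} does \emph{not} have $K(\Rn)$, $K(\Rp)$, $K(\R)$ at its corners: all four corners are $\sdot$-constructions of the \emph{same} category $\cC=\ch\vb$, with four different notions of weak equivalence. The identification of the corners with module $K$-theory goes via the forgetful functors of Lemma~\ref{lem:forget-C-D}, which land in the auxiliary categories $\mathbb{D}_{-}$, $\mathbb{D}_{+}$, $\mathbb{D}_{0}$ (bounded complexes whose $\R$-components are stably induced from the other side), not in the full categories $\chpr{\Rn}$, $\chpr{\Rp}$, $\chpr{\R}$. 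So what the homotopy cartesian square actually produces is a \textsc{Mayer}--\textsc{Vietoris} sequence with entries $K_{q}\mathbb{D}_{-}$, $K_{q}\mathbb{D}_{+}$, $K_{q}\mathbb{D}_{0}$.

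This is not a matter of conventions. For $q>0$ one needs \textsc{Grayson} cofinality (Lemma~\ref{lem:correct_K0}) to replace $K_{q}\mathbb{D}_{\pm}$ by $K_{q}\Rd{\pm}$ and $K_{q}\mathbb{D}_{0}$ by $K_{q}\R$. At $q=0$ these cofinality maps are only \emph{injections}, so your ``append the cokernel'' step is not enough: the sequence coming out of the square ends with a surjection $K_{0}\mathbb{D}_{-}\oplus K_{0}\mathbb{D}_{+}\to K_{0}\mathbb{D}_{0}$, and one must separately verify exactness of the enlarged sequence at $K_{0}(\Rn)\oplus K_{0}(\Rp)$. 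The paper does this by a direct module-theoretic argument: given $(x,y)\in\ker\bar\gamma$, one writes $x=[P]-[\Rn^{p}]$, $y=[Q]-[\Rp^{q}]$, and uses the relation $j^{-}_{*}x=j^{+}_{*}y$ in $K_{0}(\R)$ to exhibit $P\tensor_{\Rn}\R$ as stably induced from~$\Rp$ (and symmetrically), so that $(x,y)$ actually lies in the image of $K_{0}\mathbb{D}_{-}\oplus K_{0}\mathbb{D}_{+}$ and hence in $\im\beta$. Without this check your tail-end argument has a gap precisely at the point where exactness is least automatic.
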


\subsection*{Homotopy nilpotent twisted endomorphisms and the localisation sequence}

Let $\R$ be a strongly $\bZ$\nbd-graded ring. We define $\cT{+}$ to be
the category of pairs $(Z,\zeta)$, with $Z$ an $\Rd{0}$\nbd-finitely
dominated bounded chain complexes of projective $\Rd{0}$\nbd-modules,
and
$\zeta \colon Z \tensor_{\Rd{0}} \Rd{1} \rTo Z \tensor_{\Rd{0}}
\Rd{0}$
a homotopy nilpotent twisted endomorphism, cf.~\S\ref{sec:nil}. Let
$\nil+q(\Rd{0})$ denote the $q$th algebraic $K$\nbd-group of~$\cT{+}$
(with respect to the quasi-isomorphisms as weak equivalences and the
levelwise split monomorphisms as cofibrations).

\begin{theorem}
  \label{thm:loc-and-nil}
  Let $\R$ be a strongly $\bZ$\nbd-graded ring. The forgetful functor
  $(Z,\zeta) \mapsto Z$, defined on~$\cT{+}$, induces a homomorphism
  $\nil+q (\Rd{0}) \rTo K_{q} \Rd{0}$ with
  kernel~$\NK-{q+1}(\Rd{0})$. The groups $\nil+q (\Rd{0})$ fit into a
  long exact sequences of algebraic $K$\nbd-groups
  \begin{multline*}
    \ldots \rTo K_{q+1} \R \rTo \nil+q (\Rd{0}) \rTo^{\phi} K_{q}
    \Rp
    \rTo K_{q} R \\[0.5em]
    \rTo \ldots \rTo \nil+0 (\Rd{0}) \rTo^{\phi} K_{0} \Rp \rTo
    K_{0} \R
  \end{multline*}
  with $\phi$ induced by the functor sending $(Z, \zeta)$ to the
  $\Rp$\nbd-module complex~$Z$ with $\Rp$ acting through~$\zeta$.
\end{theorem}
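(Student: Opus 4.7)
The plan is to realise the long exact sequence of the theorem as the fibre sequence of algebraic $K$\nbd-theory coming from a \textsc{Waldhausen} localisation, and then to extract the kernel of the forgetful map by combining the resulting exact sequence with the fundamental theorem (Theorem~\ref{thm:fundamental}).

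\textbf{Localisation setup.} First I would consider the \textsc{Waldhausen} category $\cP$ of bounded chain complexes of finitely generated projective $\Rp$\nbd-modules, with quasi-isomorphisms as weak equivalences, so that $K(\cP) \simeq K(\Rp)$ by \textsc{Gillet}-\textsc{Waldhausen}. Endow $\cP$ with the coarser class of weak equivalences given by $R$\nbd-quasi-isomorphisms (maps $f$ with $f \tensor_{\Rp} R$ a quasi-isomorphism) to form $\cP_{w}$, so that $K(\cP_{w}) \simeq K(R)$ (using that $\Rp \to R$ is flat, hence $R$ is a non-commutative \textsc{Ore}\nbd-type localisation of~$\Rp$). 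Writing $\cP^{w} \subset \cP$ for the full subcategory of $R$\nbd-acyclic objects, \textsc{Waldhausen}'s fibration theorem produces a homotopy fibre sequence
\begin{displaymath}
  K(\cP^{w}) \rTo K(\cP) \rTo K(\cP_{w}) \,.
\end{displaymath}
To identify $\cP^{w}$ with $\cT{+}$ up to $K$\nbd-equivalence, I construct a functor $\Psi \colon \cT{+} \rTo \cP^{w}$ sending $(Z, \zeta)$ to a bounded $\Rp$\nbd-projective resolution of~$Z$ whose $\Rp$\nbd-action iterates $\zeta$ through the canonical isomorphisms $\Rd{1}^{\tensor k} \iso \Rd{k}$ coming from strong gradedness; homotopy nilpotence of $\zeta$ is precisely the condition guaranteeing boundedness of the resolution and acyclicity of $Z \tensor_{\Rp} R$. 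An inverse is given by restricting scalars along $i^{+}$ and reading off $\zeta$ from the $\Rd{1}$\nbd-multiplication, with $R$\nbd-acyclicity automatically forcing the underlying $\Rd{0}$\nbd-complex to be finitely dominated. \textsc{Waldhausen}'s approximation theorem promotes $\Psi$ to a $K$\nbd-theory equivalence, yielding the long exact sequence of the theorem with $\phi$ induced by $\Psi$.

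\textbf{Kernel of the forgetful map.} The forgetful functor is split by $[Z] \mapsto [(Z, 0)]$, giving $\nil+q (\Rd{0}) = K_{q} \Rd{0} \oplus N_{q}$ with $N_{q}$ the kernel. The key computation uses the two\nbd-term $\Rp$\nbd-projective resolution
\begin{displaymath}
  0 \rTo Z \tensor_{\Rd{0}} \Rd{1} \tensor_{\Rd{0}} \Rp \rTo Z \tensor_{\Rd{0}} \Rp \rTo Z \rTo 0
\end{displaymath}
of $Z$ as $\Rp$\nbd-module via~$\zeta$ (differential given by multiplication minus $\zeta$; exactness guaranteed by homotopy nilpotence), which by \textsc{Waldhausen} additivity yields the formula $\phi([(Z, \zeta)]) = i^{+}_{*}\bigl([Z] - [s_{1} Z]\bigr)$ in $K_{q} \Rp$. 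This expression depends only on $[Z]$, not on~$\zeta$, whence $\phi|_{N_{q}} = 0$; and its restriction to the $K_{q} \Rd{0}$\nbd-summand is $i^{+}_{*} \circ (\id - s_{1})$, which by the remark following Definition~\ref{def:shift_kernel} has kernel~$\sker+q (\Rd{0})$. Combining these facts yields $\ker \phi = \sker+q (\Rd{0}) \oplus N_{q}$ inside $\nil+q (\Rd{0})$. Now the fundamental theorem identifies the image of $j^{+}_{*} \colon K_{q+1} \Rp \rTo K_{q+1} R$ with the direct summand $\scoker+{q+1} (\Rd{0}) \oplus \NK+{q+1} (\Rd{0})$ of $K_{q+1} R$; by exactness of the long exact sequence $\ker \phi \iso K_{q+1} R / \mathrm{image}(j^{+}_{*})$ is then an extension of $\sker+q (\Rd{0})$ by $\NK-{q+1} (\Rd{0})$, and combining with the direct-sum decomposition above, this extension is necessarily split and $N_{q} \iso \NK-{q+1} (\Rd{0})$ as required.

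\textbf{Main obstacle.} The principal difficulty lies in the \textsc{Waldhausen}\nbd-theoretic setup, particularly in identifying $K(\cP_{w}) \simeq K(R)$ and in the approximation-theorem step showing that $\Psi$ is a $K$\nbd-equivalence; both rely on controlled behaviour of the non-commutative localisation $\Rp \to R$, and use in an essential way that $\Rd{1}$ is an invertible $\Rd{0}$\nbd-bimodule. Once the localisation sequence is in hand, the identification of the kernel reduces, as sketched above, to a combination of additivity in $K$\nbd-theory and the fundamental theorem.
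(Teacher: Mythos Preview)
Your overall strategy---a \textsc{Waldhausen} fibration plus an identification of the fibre with~$\cT{+}$---is the right shape, but both halves contain gaps, and the paper takes a rather different route through the projective line.

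\textbf{The localisation sequence.} The paper does not work directly with $\chpr{\Rp}$. Instead it applies the fibration theorem to the right-hand vertical map of the \emph{fundamental square}~\eqref{eq:fund} of~$\pp$, identifies the fibre with~$\cT{+}$ (Theorem~\ref{thm:fibre-terms-are-nil}), and then identifies the other two corners with $K(\Rp)$ and~$K(\R)$ via the auxiliary categories $\mathbb{D}_{+}$, $\mathbb{D}_{0}$ and \textsc{Grayson} cofinality (Lemmas~\ref{lem:correct_K0} and~\ref{lem:forget-C-D}), treating the $K_{0}$\nbd-tail separately. Your direct route founders at the step $K(\cP_{w}) \simeq K(\R)$: for a general strongly $\bZ$\nbd-graded ring, $\R$ is \emph{not} an \textsc{Ore} localisation of~$\Rp$ (in the example $K[A,B,C,D]/(AB+CD-1)$ there is no homogeneous unit of positive degree), and the approximation theorem for $(\nix \tensor_{\Rp} \R) \colon \cP_{w} \to \chpr{\R}$ needs every bounded complex in $\chpr{\R}$ to lift to~$\cP$, which fails whenever $K_{0}(\Rp) \rTo K_{0}(\R)$ is not surjective. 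This obstruction is exactly what forces the paper's cofinality detour.

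\textbf{The kernel of the forgetful map.} Your formula $\phi = i^{+}_{*} \circ (\id - s_{1}) \circ o^{+}_{*}$, from the characteristic sequence plus additivity, is correct and yields $\ker\phi = \sker+q(\Rd{0}) \oplus N_{q}$. But the closing argument has two holes. First, the fundamental theorem identifies only the image of the \emph{combined} map $j^{-}_{*} - j^{+}_{*}$ (via $\coker(\eta\beta\alpha)$ in~\eqref{eq:identify_coker}); it does not give you $\im(j^{+}_{*})$ on its own, so your claim that this image is the summand $\scoker+{q+1}(\Rd{0}) \oplus \NK+{q+1}(\Rd{0})$ is unproved. Second, even granting that $\ker\phi$ sits in a short exact sequence $0 \to \NK-{q+1}(\Rd{0}) \to \ker\phi \to \sker+q(\Rd{0}) \to 0$, the bare isomorphism $\ker\phi \iso \sker+q(\Rd{0}) \oplus N_{q}$ does \emph{not} force $N_{q} \iso \NK-{q+1}(\Rd{0})$: an abelian group can be an extension of~$A$ by non-isomorphic subgroups (take $\bZ/2 \oplus \bZ/4$ as an extension of~$\bZ/2$). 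You would need to check that the quotient map to~$\sker+q(\Rd{0})$ agrees with the retraction $o^{+}_{*}$, which amounts to tracing the connecting map~$\partial$ through the fundamental theorem. The paper bypasses all of this by running a \emph{second} fibre sequence from the fundamental square, now in the horizontal direction, obtaining $K_{q}\pp \to K_{q}\Rp \to K_{q-1}\vb^{h_{+}} \to K_{q-1}\pp$; using the explicit inverse~$\Xi$ to the splitting of~$K_{*}(\pp)$ (Lemma~\ref{lem:homotopy_inverse_alpha}) it computes the relevant maps directly and reads off $\NK+q(\Rd{0}) \iso \ker(o^{-}_{*})$ without ever invoking Theorem~\ref{thm:fundamental}.
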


There is a symmetric version involving a category~$\cT{-}$, using
$\Rd{-1}$ in place of~$\Rd{1}$, and its $K$\nbd-groups
$\nil-q(\Rd{0})$. The complete version of the result is formulated and
proved in Theorems~\ref{thm:localisation-sequence}
and~\ref{thm:NK_is_nil} below.

\subsection*{Noetherian regular rings}

Suppose that $\R$ is a $\bZ$\nbd-graded ring (possibly not strongly
graded). Suppose further that $\R$ is right \textsc{noether}ian and
regular. For this situation, \textsc{van den Bergh} \cite{MR859452}
established an exact sequence
\begin{displaymath}
  \ldots \rTo K_{q+1} \R \rTo K_{q}^{\mathrm{gr}} \R \rTo^{\sigma}
  K_{q}^{\mathrm{gr}} \R \rTo K_{q} \R \rTo \ldots \ ,
\end{displaymath}
where $K_{q}^{\mathrm{gr}} \R$ denotes the ``graded $K$\nbd-theory
of~$\R$'', that is, the $K$\nbd-theory of the category of finitely
generated projective $\bZ$\nbd-graded $\R$\nbd-modules; the map
$\sigma$ is the difference of the maps induced by the identity and the
shift map $M \mapsto M(1)$. In case $\R$ is \textit{strongly} graded,
the categories of finitely generated projective $\Rd{0}$\nbd-modules
and of finitely generated projective graded $\R$\nbd-modules are
equivalent via the functor $P \mapsto P \tensor_{\Rd{0}} \R$
(\textsc{Dade} \cite[Theorem~2.8]{MR593823}). Thus we have the long
exact sequence
\begin{displaymath}
  \ldots \rTo K_{q+1} \R \rTo K_{q} \Rd{0} \rTo^{\sd_{*}} K_{q} \Rd{0}
  \rTo K_{q} \R \rTo \ldots
\end{displaymath}
which we can split at the shift difference map to obtain:

\begin{theorem}
  Suppose that $\R$ is a strongly $\bZ$\nbd-graded right
  \textsc{noether}ian regular ring. There are short exact sequences
  (for $q > 0$)
  \begin{displaymath}
    0 \rTo \scoker+q (\Rd{0}) \rTo K_{q} (R) \rTo \sker+{q-1} (\Rd{0})
    \rTo 0 \ . \tag*{\qedsymbol}
  \end{displaymath}
\end{theorem}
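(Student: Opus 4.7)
My plan is to extract the desired short exact sequence directly from the long exact sequence
\[
  \ldots \rTo K_{q+1} \R \rTo K_{q} \Rd{0} \rTo^{\sd_{*}} K_{q} \Rd{0}
  \rTo K_{q} \R \rTo K_{q-1} \Rd{0} \rTo^{\sd_{*}} K_{q-1} \Rd{0} \rTo
  \ldots
\]
displayed in the paragraph immediately preceding the theorem. That sequence already combines \textsc{van den Bergh}'s graded localisation sequence with \textsc{Dade}'s equivalence and has the graded shift identified with~$\sd_{*}$, so no further $K$\nbd-theoretic input will be required.

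The single piece of homological algebra I would invoke is the elementary observation that any five-term stretch $A \rTo^{\alpha} B \rTo^{\beta} C \rTo^{\gamma} D \rTo^{\delta} E$ of an exact sequence of abelian groups gives rise to a short exact sequence $0 \rTo \coker\alpha \rTo C \rTo \ker\delta \rTo 0$. Indeed, the left-hand map is induced by~$\beta$ via the factorisation through $\im\beta = \ker\gamma \subseteq C$, the right-hand map is the corestriction of~$\gamma$ onto $\im\gamma = \ker\delta$, and exactness in the middle is just the original exactness at~$C$.

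I would then apply this verbatim at $C = K_{q}(\R)$ with $\alpha = \delta = \sd_{*}$. The left-hand cokernel is then $\coker\bigl( \sd_{*} \colon K_{q}\Rd{0} \rTo K_{q}\Rd{0} \bigr) = \scoker+q(\Rd{0})$ by definition, and the right-hand kernel is $\ker\bigl( \sd_{*} \colon K_{q-1}\Rd{0} \rTo K_{q-1}\Rd{0} \bigr) = \sker+{q-1}(\Rd{0})$, producing precisely the claimed short exact sequence. The hypothesis $q > 0$ enters only to ensure that all five adjacent terms lie safely in the range covered by van den Bergh's sequence, so that negative $K$\nbd-groups need not be addressed.

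There is essentially no hard part: the theorem is a purely formal corollary of the long exact sequence displayed above. The one point that might conceivably have required attention — namely that the graded shift $M \mapsto M(1)$ corresponds under \textsc{Dade}'s equivalence to the module-theoretic shift~$s_{-1}$ used in Definition~\ref{def:shift_kernel} — has already been absorbed into the discussion preceding the theorem statement.
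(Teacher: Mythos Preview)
Your proposal is correct and is precisely the paper's own argument: the preceding paragraph ends with ``which we can split at the shift difference map to obtain'' the theorem, and the \qedsymbol{} on the statement signals that nothing more is offered. You have simply made explicit the elementary five-term splitting that the paper leaves implicit.
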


Comparison with Theorem~\ref{thm:fundamental} suggests that
$\NK\pm{q}(\Rd{0}) = 0$ in this situation. We will not pursue the
issue here.

\section{Induced modules and chain complexes}

\subsection*{The Grothendieck group of a ring}

Let $L$ be a rings. The group~$K_{0}(L)$ is, by definition, the
\textsc{Grothendieck} group of the category $\rbPr L$ of finitely
generated projective $L$\nbd-modules; we denote the element
corresponding to the module~$P$ by the symbol~$[P]$. The cokernel of
the map $K_{0}(\bZ) \rTo K_{0}(L)$ induced by the induction functor
$M \mapsto M \tensor_{\bZ} L$ is the reduced \textsc{Grothen\-dieck}
group $K_{0} (\bZ \downarrow L)$ of~$L$, more usually denoted by the
symbol $\tilde K_{0}(L)$. We write the element corresponding to the
module~$P$ by $\alpha \big([P]\big)$. The following equivalences are
well known:
\begin{subequations}
  \begin{gather}
    [P] = [Q] \quad \text{in } K_{0}(L) %
    \qquad \Leftrightarrow \qquad %
    \exists k \geq 0 \colon P \oplus L^{k} \iso Q \oplus L^{k} \\ %
    \intertext{and} %
    \alpha \big([P]\big) = \alpha \big([Q]\big) %
    \quad \text{in } K_{0}(\bZ \downarrow L) %
    \qquad \Leftrightarrow \qquad %
    \exists k,\ell \geq 0 \colon P \oplus L^{k} \iso Q \oplus
    L^{\ell} %
  \end{gather}
\end{subequations}
In particular, $\alpha \big([P]\big) = 0$ if and only if $P$~is stably
free.

\medbreak

We will repeatedly make use of the fact that $K_{0}(L)$ can be
described in other ways, for example using the machinery of
\textsc{Waldhausen} $K$\nbd-theory applied to the category $\chpr L$
of bounded complexes of finitely generated projective $L$\nbd-modules
(with quasi-isomorphisms as weak equivalences, and levelwise split
monomorphisms as cofibrations). This description is such that a chain
complex~$C$ in~$\chpr L$ gives rise to the element
$[C] = \chi (C) = \sum_{k} (-1)^{k} [C_{k}]$ of~$K_{0}(L)$. If $C$ is
contractible then $[C] = 0$.

\subsection*{Induced and stably induced modules}

Let $f \colon L \rTo S$ be a ring homomorphism, with induced map
\begin{displaymath}
f_{*} \colon K_{0} (L) \rTo K_{0} (S) \ , \quad [P] \mapsto [P
\tensor_{L} S] \ .
\end{displaymath}
We will use the notation $f_{*}$ also for the induction functor
$\nix \tensor_{L} S$ so that
$f_{*} \big( [P] \big) = \big[ f_{*}(P) \big]$.

\begin{definition}
  Let $Q$ be a finitely generated projective $S$\nbd-module.
  \begin{enumerate}
  \item We say that $Q$ is \textit{induced from~$L$} if there exists a
    finitely generated projective $L$\nbd-module $P$ such that
    $f_{*}(P) \iso Q$.
  \item The module $Q$ is called \textit{stably induced from~$L$} if
    there exist a number $q \geq 0$ and a finitely generated
    projective $L$\nbd-module $P$ such that
    $f_{*}(P) \iso Q \oplus S^{q}$.
  \end{enumerate}
\end{definition}

Algebraic $K$\nbd-theory provides an obstruction to modules being
stably induced, the obstruction group being
$K_{0} (L \downarrow S) = \coker (f_{*})$. It comes equipped with a
canonical map
\begin{equation}
  \label{eq:alpha}
  \alpha \colon K_{0} (S) \rTo K_{0} (L \downarrow S) \ .
\end{equation}

\begin{proposition}
  \label{prop:stably_induced_module}
  Let $Q$ be a finitely generated projective $S$-module. The following
  are equivalent:
  \begin{enumerate}
  \item The module $Q$ is stably induced from~$L$.
  \item The element $\alpha \big( [Q] \big)$
    of~$K_{0} (L \downarrow S)$ is trivial.
  \end{enumerate}
\end{proposition}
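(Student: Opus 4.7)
The plan is to work directly from the definition $K_0(L\downarrow S) = \coker(f_*)$, so that $\alpha([Q])=0$ simply means $[Q] \in \im f_*$ inside $K_0(S)$, and then to translate statements about equality of $K_0$-classes into honest module isomorphisms by means of the stable-isomorphism criterion recalled in~(a) at the start of this section.

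For the easy implication (1) $\Rightarrow$ (2), I would assume $f_*(P) \iso Q \oplus S^q$ and pass to $K_0(S)$; this gives $f_*([P]) = [Q] + q[S] = [Q] + q f_*([L])$, so
\[
    [Q] = f_*\big([P] - q[L]\big) \in \im f_* \ ,
\]
whence $\alpha([Q])=0$. No substantive work is required here.

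For the converse (2) $\Rightarrow$ (1), the hypothesis $\alpha([Q])=0$ supplies finitely generated projective $L$-modules $P_1, P_2$ with $[Q] = f_*([P_1]) - f_*([P_2])$ in $K_0(S)$, equivalently $f_*([P_1]) = [Q] + f_*([P_2])$. Applying criterion~(a) inside $K_0(S)$ yields an integer $k \geq 0$ and an isomorphism
\[
    f_*(P_1 \oplus L^k) \iso Q \oplus f_*(P_2) \oplus S^k \ .
\]
The one place where care is needed is that the right-hand side is \emph{not} yet of the form $Q \oplus S^{\bullet}$, because the summand $f_*(P_2)$ need not be free.

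The fix, which I regard as the main (though routine) step of the argument, is to absorb $P_2$ into a free module: since $P_2$ is finitely generated projective over $L$, choose a complement $P_2'$ with $P_2 \oplus P_2' \iso L^m$ for some $m \geq 0$, and apply $f_*(\nix) \oplus f_*(P_2')$ to the previous isomorphism. This produces
\[
    f_*\big(P_1 \oplus L^k \oplus P_2'\big) \iso Q \oplus f_*\big(P_2 \oplus P_2'\big) \oplus S^k \iso Q \oplus S^{m+k} \ ,
\]
exhibiting $Q$ as stably induced from $L$ with witness $P' = P_1 \oplus L^k \oplus P_2'$. Thus the only genuine obstacle in the argument, namely the presence of a projective but possibly non-free summand $P_2$ in the $K_0$-level equation, is overcome by the standard trick of adding a projective complement to free up $P_2$.
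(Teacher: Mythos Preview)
Your proof is correct and follows essentially the same approach as the paper's. The only cosmetic difference is that the paper normalises the preimage in $K_0(L)$ to the form $[M]-[L^r]$ at the outset (so the troublesome projective summand never appears), whereas you write it as $[P_1]-[P_2]$ and then free up $P_2$ by adding a complement at the end; both are the same standard manoeuvre.
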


For example, if $Q$ is a stably free $S$\nbd-module then $Q$~is
certainly stably induced from~$L$ so that $\alpha \big( [Q] \big) = 0$
in~$K_{0} (L \downarrow S)$

\begin{proof}[Proof of Proposition~\ref{prop:stably_induced_module}]
  If $Q \oplus S^{s} = P \tensor_{L} S$ then
  $[Q] = f_{*} \big( [P] \big) - f_{*} \big( [L^{s}] \big)$ lies in
  the image of~$f_{*}$ whence
  $\alpha \big( [Q] \big) = 0 \in K_{0} (L \downarrow S)$.

  Conversely, suppose that $\alpha \big( [Q] \big) = 0$. Then there
  exists $a \in K_{0} (L)$ such that $[Q] = f_{*} (a)$ in~$K_{0} (S)$.
  We can find a finitely generated projective $L$-module $M$ and a
  number $r \geq 0$ such that $a = [M] - [L^{r}]$ in $K_{0} (L)$.
  By applying~$f_{*}$ and re-arranging we find the equality
  \[ [Q \oplus S^{r}] = [Q] + [S^{r}] = f_{*}(a) + f_{*} \big( [L^{r}]
  \big) = f_{*} \big( [M] \big) \quad \in K_{0} (S) \ .\]
  This in turn implies that there exists $k \geq 0$ with
  \begin{displaymath}
    Q \oplus S^{r} \oplus S^{k} \iso f_{*}(M) \oplus \ S^{k} = f_{*}
    (M \oplus L^{k}) \ .
  \end{displaymath}
  This shows $Q$ to be stably induced.
\end{proof}

\subsection*{Stabilisation of chain complexes}

As a matter of notation, we write $D(k,M)$ for the chain complex
concentrated in degrees $k$ and~$k-1$ with non-trivial entries~$M$ and
differential the identity map of~$M$.

\begin{definition}
  Let $D$ be a chain complex of $S$\nbd-modules. We say that the chain
  complex~$D'$ is a \textit{stabilisation} of~$D$ if
  $D' = D \oplus \bigoplus_{k} D(k, F_{k})$ for finitely many finitely
  generated free $S$\nbd-modules $F_{k}$.
\end{definition}

If $D'$ is a stabilisation of~$D$ then there are mutually homotopy
inverse chain homotopy equivalences $s \colon D \rTo D'$ (the
inclusion) and $r \colon D' \rTo D$ (the projection) such that
$rs = \id_{D}$, and such that
$\coker(s) = \ker(r) = \bigoplus_{k} D(k, F_{k})$ is a contractible
bounded complex with finitely generated free chain, boundary and cycle
modules.

\subsection*{Induced and stably induced chain complexes. The chain
  complex lifting problem}

Let $f \colon L \rTo S$ be a ring homomorphism.

\begin{definition}
  Let $D$ be a bounded complex of finitely generated projective
  $S$\nbd-modules.
  \begin{enumerate}
  \item We say that $D$ is \textit{induced from~$L$} if there exists a
    bounded complex of finitely generated projective $L$\nbd-module
    $C$ such that $f_{*}(C) \iso D$.
  \item The complex $D$ is called \textit{stably induced from~$L$} if
    there exist a bounded complex $C$ of finitely generated
    projective $L$\nbd-modules such that $f_{*}(C)$ is isomorphic to a
    stabilisation of~$D$.
  \end{enumerate}
\end{definition}

Again, let $D$ be a bounded complex of finitely generated projective
$S$\nbd-modules; suppose that all chain modules~$D_{k}$ are induced
from~$L$. The \textit{chain complex lifting problem} is to decide
whether $D$ is homotopy equivalent to a complex induced from~$L$. We
will address two variations of the theme below and show that
\begin{itemize}
\item if $D$ is acyclic, then there exists an \textit{acyclic} bounded
  complex~$C$ of finitely generated projective $L$\nbd-modules such
  that $f_{*}(C)$ is isomorphic to a stabilisation of~$D$;
\item if $f$ satisfies a certain strong flatness condition, then $D$
  is stably induced from~$L$.
\end{itemize}

\subsection*{The chain complex lifting problem for acyclic complexes}

\begin{theorem}
  \label{thm:lifting_acyclic_complexes}
  Every acyclic complex of stably induced modules is stably induced
  from an acyclic complex. --- More precisely, let $D$ be an acyclic
  bounded chain complex of finitely generated projective
  $S$\nbd-modules concentrated in chain levels $0$ to~$n$ such that
  each chain module $D_{k}$ is stably induced from~$L$. Then there
  exist a stabilisation~$D'$ of~$D$ and an acyclic bounded
  complex $C'$ of finitely generated projective $L$\nbd-modules, both
  concentrated in chain levels $0$ to~$n$, such that $f_{*} (C')$ is
  isomorphic to~$D'$.
  \end{theorem}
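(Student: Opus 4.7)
My plan is to split the acyclic complex $D$ along its cycle modules, then promote stable inducedness from the chain modules to the cycle modules using the $K_0$\nbd-theoretic obstruction from Proposition~\ref{prop:stably_induced_module}, and finally realise the lift $C'$ by an elementary direct-sum construction.

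Since each $D_k$ is projective, the short exact sequences $0 \rTo Z_k \rTo D_k \rTo Z_{k-1} \rTo 0$, where $Z_k = \ker \partial_k$, split; each cycle module $Z_k$ is therefore a direct summand of the finitely generated projective module~$D_k$, in particular itself finitely generated projective, and $D_k \iso Z_k \oplus Z_{k-1}$. Note that $Z_{-1} = 0 = Z_n$ from the concentration range of~$D$. I then show by downward induction on~$k$, starting with $Z_n = 0$, that every $Z_k$ is stably induced from~$L$: the split sequence above gives $[D_k] = [Z_k] + [Z_{k-1}]$ in $K_0(S)$; applying $\alpha \colon K_0(S) \rTo K_0 (L \downarrow S)$ and combining $\alpha([D_k]) = 0$ (hypothesis) with $\alpha([Z_k]) = 0$ (inductive assumption) yields $\alpha([Z_{k-1}]) = 0$, whence $Z_{k-1}$ is stably induced by Proposition~\ref{prop:stably_induced_module}.

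With this in hand, I choose, for each~$k$, a finitely generated projective $L$\nbd-module~$P_k$, an integer $a_k \geq 0$ and an isomorphism $\phi_k \colon f_{*}(P_k) \iso Z_k \oplus S^{a_k}$, taking $P_{-1} = P_n = 0$. Set $C' = \bigoplus_{k=1}^{n} D(k, P_{k-1})$: this is an acyclic bounded complex of finitely generated projective $L$\nbd-modules concentrated in levels $0$ to~$n$, with $C'_j = P_j \oplus P_{j-1}$ and ``shift'' differential $(p, q) \mapsto (q, 0)$. Combining the isomorphisms $\phi_k$ with the chosen splittings $D_j \iso Z_j \oplus Z_{j-1}$, a direct computation identifies $f_{*}(C')$ with $D \oplus \bigoplus_{k=1}^{n} D(k, S^{a_{k-1}})$, which is a stabilisation~$D'$ of~$D$ as required.

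The hard part is the second step, converting the hypothesis ``each chain module is stably induced'' into ``each cycle module is stably induced.'' This is where Proposition~\ref{prop:stably_induced_module} enters decisively, trading the projective splitting $D_k \iso Z_k \oplus Z_{k-1}$ for additivity in the obstruction group $K_0 (L \downarrow S)$. Once the $Z_k$ are stably induced, the construction of $C'$ as a direct sum of elementary complexes $D(k, P_{k-1})$ is essentially formal; the only care needed is verifying that the shift-type differential on $C'$, read through the chosen isomorphisms, indeed produces a stabilisation of~$D$ rather than merely an acyclic complex with the correct chain modules level by level.
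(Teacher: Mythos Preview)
Your proof is correct and essentially identical to the paper's. The paper writes the decomposition as $D \iso \bigoplus_{k=1}^{n} D(k, E_{k})$ with $E_{k}$ playing the role of your cycle module $Z_{k-1}$, runs the induction upward from $E_{1} = D_{0}$ rather than downward from $Z_{n} = 0$, and then defines $C' = \bigoplus_{k} D(k, P_{k})$; modulo this index shift the two arguments coincide step for step, including the use of Proposition~\ref{prop:stably_induced_module} to propagate stable inducedness along the splittings.
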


\begin{proof}
  As $D$ is acyclic there are finitely generated projective
  $S$-modules $E_{k}$, for $1 \leq k \leq n$, such that
  \begin{equation}
    \label{eq:this_is_D}
    D \iso \bigoplus_{k=1}^{n} D(k, E_{k}) \ .
  \end{equation}
  By construction we have $E_{1} = D_{0}$ and consequently
  $\alpha \big( [E_{1}] \big) = 0$, since $D_{0}$ is stably induced
  from~$L$ by hypothesis. By iteration, assuming that
  $\alpha \big( [E_{k-1}] \big) = 0$ is known, we infer from
  $D_{k-1} = E_{k-1} \oplus E_{k}$ that
  $\alpha \big( [E_{k}] \big) = 0$ as well.

  Thus for $1 \leq k \leq n$ we can choose numbers $j_{k} \geq 0$ and
  finitely generated projective $L$-modules $P_{k}$ such that
  \begin{displaymath}
    E_{k} \oplus S^{j_{k}} \iso f_{*}(P_{k}) \ .
  \end{displaymath}
  We define chain complexes
  \begin{displaymath}
    D' = \bigoplus_{k=1}^{n} D(k, E_{k} \oplus S^{j_{k}}) %
    \qquad \text{and} \qquad %
    C' = \bigoplus_{k=1}^{n} D(k, P_{k}) \ ; %
  \end{displaymath}
  thus $C'$ and~$D'$ are acyclic complexes concentrated in chain
  levels $0$ to~$n$. The computation
  \begin{displaymath}
    D' = \bigoplus_{k=1}^{n} D(k, E_{k} \oplus S^{j_{k}}) = %
    \bigoplus_{k=1}^{n} \Big( D(k, E_{k}) \oplus D(k,S^{j_{k}})
    \Big) %
    \underset{\eqref{eq:this_is_D}}{\iso} %
    D \oplus \bigoplus_{k=1}^{n} D(k, S^{j_{k}})
  \end{displaymath}
  confirms $D'$ as a stabilisation of~$D$. Finally, by construction
  we have isomorphisms of chain complexes
  \begin{displaymath}
    f_{*} (C') = \bigoplus_{k=1}^{n} D\big( k, f_{*}(P_{k}) \big) \iso
    \bigoplus_{k=1}^{n} D(k, E_{k} \oplus S^{j_{k}}) = D'
  \end{displaymath}
  as required.
\end{proof}

\subsection*{The chain complex lifting problem for well-behaved ring homomorphisms}

As before, let $f \colon L \rTo S$ be a ring homomorphism. We consider
$S$ as an $L$-$L$-bimodule, with $L$ acting \textit{via}~$f$. To solve
the chain complex lifting problem, we will assume that the following
condition is satisfied:
\begin{equation}
  \begin{gathered}
    \text{The $L$-$L$-bimodule $S$ is a filtered colimit of
      $L$-$L$-sub-bimodules~$S_{j}$ which} \\ %
    \text{are finitely generated projective right
      $L$\nbd-modules such that $S_{j} \tensor_{L} S = S\,$.}
  \end{gathered}
  \label{eq:condition}
\end{equation}
In particular, $S$ is a flat right $L$\nbd-module in this case.

\begin{example}
  \label{ex:strong_implies_diamond}
  The ring inclusion $f \colon L = \Rn \rTo^{\subseteq} \R = S$ with
  $\R$ a strongly $\bZ$-\nbd-graded ring satisfies
  condition~\eqref{eq:condition} since
  $\R = \bigcup_{k \geq 0} \Rnn {k}$ with $\Rnn {k}$ a finitely
  generated projective (left and right) $\Rn$\nbd-module
  \cite[Lemma~I.2.2]{P1} such that $\Rpn{k} \tensor_{\Rn} \R = \R$
  \cite[Lemma~I.2.4]{P1}.
\end{example}

\begin{proposition}
\label{prop:induced_complex}
Let $D$ be a chain complex consisting of finitely generated
pro\-jec\-tive $S$\nbd-mod\-ules concentrated in chain levels~$0$
to~$n$. Suppose that each chain module $D_{k}$ is induced from~$L$ so
that $D_{k} \iso f_{*}(C'_{k})$ for a finitely generated projective
$L$\nbd-module $C'_{k}$. Suppose further that the ring
homomorphism~$f$ satisfies condition~\eqref{eq:condition}. Then there
exists a chain complex $C$ of finitely generated projective
$L$\nbd-modules, concentrated in chain levels~$0$ to~$n$, such that
$f_{*}(C) \iso D$.
\end{proposition}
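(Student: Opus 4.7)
The plan is to argue by induction on the length~$n$. The base case $n=0$ is immediate: take $C = C'_0$. For the inductive step, apply the induction hypothesis to the shifted complex~$D^{\geq 1}$ of length~$n-1$ (with chain modules $D_1, \ldots, D_n$, each induced from~$L$ by hypothesis) to obtain a bounded complex~$\hat C$ of finitely generated projective $L$\nbd-modules in degrees~$0$ to~$n-1$ together with an isomorphism $\hat\alpha \colon f_*(\hat C) \iso D^{\geq 1}$. It then remains to attach a bottom module~$C_0$ and a differential $C_1 := \hat C_0 \to C_0$ that, after applying~$f_*$, recovers $D_0$ and~$d^D_1$.

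Transporting $d^D_1$ through~$\hat\alpha_0$ produces an $S$\nbd-module map $e_1 \colon f_*(\hat C_0) \to f_*(C'_0)$, whose adjoint under the tensor--hom adjunction is an $L$\nbd-module map $\bar e_1 \colon \hat C_0 \to C'_0 \tensor_L S$. Condition~\eqref{eq:condition} expresses $C'_0 \tensor_L S$ as the filtered colimit $\dirlim_j C'_0 \tensor_L S_j$; since $\hat C_0$ is finitely generated projective, and hence compact, the map $\bar e_1$ factors through some $C'_0 \tensor_L S_j$, yielding $\tilde e_1 \colon \hat C_0 \to C'_0 \tensor_L S_j$. Declare $C_0 := C'_0 \tensor_L S_j$ (a finitely generated projective right $L$\nbd-module, being a direct summand of~$S_j^r$ for some~$r$) and use $\tilde e_1$ as the new bottom differential.

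Two verifications then complete the argument. First, $C$ is indeed a chain complex: the inclusion $C'_0 \tensor_L S_j \hookrightarrow C'_0 \tensor_L S$ is injective because $C'_0$ is flat as a right $L$\nbd-module, so it suffices to show that $\bar e_1 \circ d^{\hat C}_1$ vanishes, which follows by naturality of the adjunction from $e_1 \circ f_*(d^{\hat C}_1) = d^D_1 \circ d^D_2 = 0$. Second, the equality $S_j \tensor_L S = S$ supplied by condition~\eqref{eq:condition} gives a canonical identification $f_*(C_0) = C'_0 \tensor_L S_j \tensor_L S \iso C'_0 \tensor_L S \iso D_0$, under which unwinding the adjunction shows that $f_*(\tilde e_1)$ coincides with~$e_1$; the higher degrees are handled directly by~$\hat\alpha$.

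The principal obstacle is that the adjunction will not, in general, produce an $L$\nbd-module lift $\hat C_0 \to C'_0$ of~$e_1$, only a factorisation through some enlargement $C'_0 \tensor_L S_j$. It is precisely the absorption property $S_j \tensor_L S = S$ of condition~\eqref{eq:condition} that allows this enlargement to disappear upon inducing back to~$S$, which is why one must take $C_0$ to be $C'_0 \tensor_L S_j$ rather than $C'_0$ itself.
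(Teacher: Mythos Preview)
Your proof is correct and follows essentially the same approach as the paper: both use condition~\eqref{eq:condition} to factor the differentials of~$D$ through finite stages $C'_{k} \tensor_L S_j$ of the colimit, thereby lifting them to $L$\nbd-linear maps between finitely generated projective $L$\nbd-modules. The paper organises this as an explicit top-down iteration (handling the chain-complex condition by enlarging the index~$j$ so that consecutive lifted differentials compose to zero), whereas you package it as an induction on~$n$ and verify $\partial^2 = 0$ via the injectivity of $C'_0 \tensor_L S_j \hookrightarrow C'_0 \tensor_L S$; these are minor organisational variations on the same idea.
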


\begin{proof}
  We denote the differentials of~$D$ by
  $d_{k} \colon D_{k} \rTo D_{k-1}$. Set $C_{k} = \{0\}$ for $k<0$
  and for $k > n$, and choose $C_{n} = C'_{n}$.

  Since $C_{n}$ is finitely generated, the composite map
  \begin{displaymath}
    C_{n} \rTo C_{n} \tensor_{L} S \iso D_{n} \rTo^{d_{n}} D_{n-1} =
    f_{*} (C'_{n-1}) \underset{\eqref{eq:condition}}= \dirlim_{j}
    C'_{n-1} \tensor_{L} S_{j}   
  \end{displaymath}
  factors through some stage $j_{n}$ of the colimit system, resulting
  in a map
  \begin{displaymath}
    \partial_{n} \colon C_{n} \rTo C'_{n-1} \tensor_{L} S_{j_{n}} =:
    C_{n-1}
  \end{displaymath}
  with target a finitely generated projective $L$\nbd-module, by our
  hypotheses on~$f$, such that $f_{*} (\partial_{n}) \iso d_{n}$.

  Since $C_{n-1}$ is finitely generated, the composite map
  \begin{displaymath}
    C_{n-1} \rTo C_{n-1} \tensor_{L} S \iso D_{n-1} \rTo^{d_{n-1}}
    D_{n-2} =
    f_{*} (C'_{n-2}) \underset{\eqref{eq:condition}}= \dirlim_{j}
    C'_{n-2} \tensor_{L} S_{j}
  \end{displaymath}
  factors through some stage $j$ of the colimit system, resulting
  in a map
  \begin{displaymath}
    \tilde\partial_{n-1} \colon C_{n-1} \rTo C'_{n-2} \tensor_{L}
    S_{j} \ .
  \end{displaymath}
  By replacing $j$ with a larger index~$j_{n-1}$ (where ``larger''
  refers to the filtered properties of the indexing category), we may
  assume that the composite with $\partial_{n}$ is the zero map. In
  other words, we have a map
  \begin{displaymath}
    \tilde\partial_{n-1} \colon C_{n-1} \rTo C'_{n-2} \tensor_{L}
    S_{j_{n-1}} =: C_{n-2}
  \end{displaymath}
  with target a finitely generated projective $L$\nbd-module, by our
  hypotheses on~$f$, such that $f_{*} (\partial_{n}) \iso d_{n}$ and
  $\partial_{n-1} \circ \partial_{n} = 0$.

  The process is repeated iteratively, until we have constructed
  $C_{0}$ and~$\partial_{1}$.  
\end{proof}

\begin{proposition}
  \label{prop:make_induced}
  Let $D$ be a chain complex consisting of finitely generated
  projective $S$\nbd-mod\-ules concentrated in chain levels
  $0 \leq k \leq n$, such that each chain module $D_{k}$ is stably
  induced from~$L$.  Suppose that the ring homomorphism~$f$ satisfied
  condition~\eqref{eq:condition}. Then there exist a stabilisation
  $D'$ of~$D$ and a chain complex~$C'$ of finitely generated
  projective $L$\nbd-modules, both concentrated in chain levels
  $0 \leq k \leq n+1$, such that $f_{*}(C') \iso D'$.
\end{proposition}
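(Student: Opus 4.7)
The plan is to use the stable induction hypothesis to produce an explicit stabilisation $D'$ of~$D$ whose every chain module is actually induced (not merely stably induced) from~$L$, and then invoke Proposition~\ref{prop:induced_complex} to lift $D'$ to a complex~$C'$ of finitely generated projective $L$\nbd-modules.

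First I would use the hypothesis that each $D_{k}$ (for $0 \leq k \leq n$) is stably induced: pick numbers $j_{k} \geq 0$ and finitely generated projective $L$\nbd-modules~$P_{k}$ with $D_{k} \oplus S^{j_{k}} \iso f_{*}(P_{k})$. The idea is to cancel the stabilising free summands $S^{j_{k}}$ by inserting suitable elementary contractible complexes of the form $D(m,F)$ into~$D$, where the only resource we have for building a stabilisation is pieces $D(m,F)$ with $F$ finitely generated free.

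The natural choice is to set
\begin{displaymath}
  D' = D \oplus \bigoplus_{k=0}^{n} D(k+1,\, S^{j_{k}})\ ,
\end{displaymath}
which by construction is a stabilisation of~$D$ concentrated in chain levels $0 \leq k \leq n+1$. A direct degreewise inspection gives
\begin{displaymath}
  D'_{0} = D_{0} \oplus S^{j_{0}}, \qquad
  D'_{k} = D_{k} \oplus S^{j_{k-1}} \oplus S^{j_{k}} \text{ for } 1 \leq k \leq n, \qquad
  D'_{n+1} = S^{j_{n}}\ .
\end{displaymath}
Using $D_{k} \oplus S^{j_{k}} \iso f_{*}(P_{k})$ together with $S^{j_{k-1}} = f_{*}(L^{j_{k-1}})$, every $D'_{k}$ is visibly isomorphic to $f_{*}$ of a finitely generated projective $L$\nbd-module; the module $D'_{n+1} = f_{*}(L^{j_{n}})$ is induced for free reasons, and $D'_{0} = f_{*}(P_{0})$ likewise.

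Since the ring map~$f$ satisfies condition~\eqref{eq:condition} by hypothesis, Proposition~\ref{prop:induced_complex} applied to~$D'$ (which is a bounded complex of finitely generated projective $S$\nbd-modules concentrated in degrees $0$ to~$n+1$ with each chain module induced from~$L$) produces a complex~$C'$ of finitely generated projective $L$\nbd-modules concentrated in the same range with $f_{*}(C') \iso D'$, finishing the proof. The only point requiring any thought is the precise bookkeeping that ensures each $D'_{k}$ really is induced and not merely stably induced; the shift from $D(k,\nix)$ to $D(k+1,\nix)$ in the stabilising summands (which is also the reason the chain level range grows from $n$ to~$n+1$) is what turns the stable induction data $D_{k} \oplus S^{j_{k}} \iso f_{*}(P_{k})$ into a genuine induction statement at every chain degree of~$D'$.
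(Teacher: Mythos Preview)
Your proof is correct and is essentially identical to the paper's: both stabilise by $\bigoplus_{k} D(k+1,S^{j_{k}})$ and then invoke Proposition~\ref{prop:induced_complex}. The paper simply asserts that each $D'_{k}$ is ``the direct sum of a module induced from~$L$ with a finitely generated free module'' without your explicit degreewise breakdown, but the argument is the same.
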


\begin{proof}
  For $0 \leq k \leq n$ choose a number $s_{k} \geq 0$ such that
  $D_{k} \oplus S^{s_{k}}$ is induced from~$L$, and choose a finitely
  generated projective $L$\nbd-module $C_{k}$ with
  $f_{*} (C_{k}) \iso D_{k} \oplus S^{s_{k}}$. Set
  $D' = D \oplus \bigoplus_{k} D(k+1, S^{s_{k}})$. By construction,
  $D'$ is concentrated in chain levels $0$ to~$n+1$, and each chain
  module $D'$ is induced from~$L$ (as it is the direct sum of a module
  induced from~$L$ with a finitely generated free module). Hence
  Proposition~\ref{prop:induced_complex} yields a bounded complex~$C'$
  of finitely generated projective $L$\nbd-modules, concentrated in
  chain levels $0$ to~$n+1$, such that $f_{*}(C') \iso D'$.
\end{proof}

\section{Finite domination}

Let $C$ be a (possibly unbounded) chain complex of $K$\nbd-modules,
for some unital ring~$K$. We say that $C$ is \textit{$K$\nbd-finitely
  dominated}, or \textit{of type~\FP{} over~$K$}, if $K$ is homotopy
equivalent to a bounded complex of finitely generated projective
$K$\nbd-modules.

Finite domination satisfies the ``2-of-3 property'' with respect to
short exact sequences of chain complexes. We include a proof for
completeness.

\begin{lemma}
  \label{lem:K-fd-2-of-3}
  Let $C$, $D$ and~$E$ be bounded below complexes of projective
  $K$\nbd-modules, and suppose that there is a short exact sequence
  \begin{equation}
    \label{eq:K-fd-2-of-3}
    0 \rTo C \rTo^{f} D \rTo^{g} E \rTo 0 \ .
  \end{equation}
  If any two of the complexes are $K$\nbd-finitely dominated then so
  is the third.
\end{lemma}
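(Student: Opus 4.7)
The plan is to recognise the short exact sequence \eqref{eq:K-fd-2-of-3} as giving rise, in the homotopy category, to a distinguished triangle $C \rTo D \rTo E \rTo C[1]$ in which each of $C$, $D$, $E$ appears (up to shift) as the mapping cone of an actual chain map between the other two, and then to deduce the two-of-three statement for finite domination from the homotopy invariance of the mapping cone construction.

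First I would observe that each short exact sequence $0 \rTo C_n \rTo D_n \rTo E_n \rTo 0$ splits, since $E_n$ is projective. A choice of splittings identifies $D_n$ with $C_n \oplus E_n$, and the differential of~$D$ then takes the matrix form
\begin{displaymath}
  d_D (c, e) = \bigl( d_C (c) + \alpha_n (e),\ d_E (e) \bigr)
\end{displaymath}
for certain $K$\nbd-linear maps $\alpha_n \colon E_n \rTo C_{n-1}$. The relation $d_D^2 = 0$ reduces to $d_C \alpha + \alpha d_E = 0$, \ie, $\alpha$ is a chain map from~$E$ to the shifted complex~$C[-1]$, and this exhibits $D$, as an actual chain complex, as isomorphic to the mapping cone of the associated chain map $\alpha' \colon E[1] \rTo C$. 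Separately, the canonical chain map $\cone(f) \rTo E$ acting by~$g$ on the $D$\nbd-summand is a quasi-isomorphism between bounded-below complexes of projectives, hence a homotopy equivalence; dually, $C$ is homotopy equivalent to $\cone(g)[-1]$.

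The second ingredient is the following ``thickness'' lemma: if $X$ and $Y$ are bounded-below complexes of projective $K$\nbd-modules that are both $K$\nbd-finitely dominated, and $h \colon X \rTo Y$ is any chain map, then $\cone(h)$ is again $K$\nbd-finitely dominated. To prove it, one chooses homotopy equivalences $\phi_X \colon X \rTo X'$ and $\phi_Y \colon Y \rTo Y'$ with bounded complexes $X'$, $Y'$ of finitely generated projective $K$\nbd-modules, transports~$h$ through the homotopy inverses to obtain a chain map $h' \colon X' \rTo Y'$ making the resulting square homotopy commute, and invokes the standard fact that a homotopy commutative square of chain complexes with two-sided homotopy equivalences on the sides induces a homotopy equivalence $\cone(h) \simeq \cone(h')$. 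Since $\cone(h')$ is itself a bounded complex of finitely generated projective $K$\nbd-modules, the finite domination of~$\cone(h)$ follows.

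Assembling these pieces, the three cases of the lemma fall out immediately: if $C$ and $D$ are finitely dominated then so is $E \simeq \cone(f)$; if $D$ and $E$ are finitely dominated then so is $C \simeq \cone(g)[-1]$; and if $C$ and $E$ are finitely dominated then so is $D \iso \cone(\alpha')$. I expect the main obstacle to be this last case: unlike the other two, no chain map between (shifts of)~$E$ and~$C$ is supplied directly by the sequence, and one must extract it from the choice of levelwise splittings and then check that the resulting cone is chain-isomorphic to~$D$. Once this identification is in hand, the rest of the argument is formal.
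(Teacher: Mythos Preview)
Your proof is correct. Both your argument and the paper's hinge on the same core fact---your ``thickness lemma'' that the cone of a chain map between two finitely dominated (bounded-below, projective) complexes is again finitely dominated---and the paper establishes exactly this by writing down the explicit map $\cone(f) \rTo \cone(\beta f \alpha')$ via a chosen homotopy $\id_C \simeq \alpha'\alpha$. The organisation differs, however. You treat the three cases in parallel: $E \simeq \cone(f)$ and $C \simeq \cone(g)[-1]$ directly, and for the awkward middle case you use the levelwise splitting (available since $E$ consists of projectives) to exhibit $D$ itself as a mapping cone of a map between a shift of~$E$ and~$C$. The paper instead sidesteps the splitting entirely by \emph{rotating} the triangle: from the given sequence it passes to $0 \rTo D \rTo \cyl(g) \rTo \cone(g) \rTo 0$ (with $\cyl(g) \simeq E$ and $\cone(g) \simeq C[1]$), and then iterates once more, so that every case becomes a cyclic permutation of the single case ``$C,D$ finitely dominated $\Rightarrow E$ finitely dominated''. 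Your route is more hands-on and demands a little shift/sign bookkeeping in identifying $D$ with the cone of~$\alpha'$; the paper's rotation is more uniform but trades that for an extra layer of cylinder/cone machinery. The substance is the same.
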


\begin{proof}
  The given sequence gives rise to a short exact sequence of bounded
  complexes of projective $K$\nbd-modules
  \begin{displaymath}
    0 \rTo D \rTo \cyl(g) \rTo^{h} \cone(g) \rTo 0 \ ,
  \end{displaymath}
  together with homotopy equivalences $E \rTo \cyl(g)$ and
  $q \colon C[1] \rTo \cone(g)$. By iteration, there is a short exact
  sequence
  \begin{displaymath}
    0 \rTo \cyl(g) \rTo \cyl(h) \rTo \cone(h) \rTo 0 \ ,
  \end{displaymath}
  together with homotopy equivalences $C[1] \rTo \cyl(h)$ and
  $D[1] \rTo \cone(h)$. As finite domination is invariant under
  suspension and homotopy equivalences, it suffices to prove that
  \textit{if $C$ and~$D$ are $K$\nbd-finitely dominated so
    is~$E$}.

  Since we are dealing with bounded below complexes of projective
  modules, the canonical map $\cone(f) \rTo E$ is a homotopy
  equivalence.  As $C$ and~$D$ are $K$\nbd-finitely dominated there
  exist bounded complexes $C'$ and~$D'$ of finitely generated
  projective $K$\nbd-modules and chain homotopy equivalences
  $\alpha \colon C \rTo C'$ and $\beta \colon D \rTo D'$.  Choose a
  homotopy inverse~$\alpha'$ of~$\alpha$, and let
  $h \colon \id_{C} \simeq \alpha'\alpha$ be a homotopy. The chain map
  \begin{displaymath}
    \begin{pmatrix}
      \alpha \\
      \beta f h & \beta
    \end{pmatrix}
    \colon \cone(f) \rTo \cone(\beta f \alpha')
  \end{displaymath}
  is a quasi-isomorphism (since $\alpha$ and~$\beta$ are) and hence a
  homotopy equivalence; its target is a bounded complex of finitely
  generated projective $K$\nbd-modules. Thus
  $E \simeq \cone(f) \simeq \cone(\beta f \alpha')$ shows that $E$ is
  $K$\nbd-finitely dominated.
\end{proof}

\section{The projective line associated with a strongly $\bZ$-graded ring}

\label{sec:p1}

The projective line associated with a strongly $\bZ$-graded ring has
been introduced by \textsc{Montgomery} and the author \cite{P1}. We
recall definitions and $K$\nbd-theoretical results which will take a
central place when establishing the fundamental theorem.

\medbreak

\textbf{From now on we will assume throughout that
  $\R = \bigoplus_{k \in \bZ} R_{k}$ is a strongly $\bZ$\nbd-graded
  ring unless other hypotheses are specified.}

\begin{definition}[Sheaves and vector bundles on the projective line 
  {\cite[Definitions~II.1.1 and~II.2.1]{P1}}] 
  A {\it quasi-coherent sheaf on~$\pp$}, or just {\it sheaf} for
  short, is a diagram
  \begin{equation}
    \label{eq:sheaf}
    \cY = \quad \Big( Y^{-} \rTo^{\upsilon^{-}} Y^{0} \lTo^{\upsilon^{+}}
    Y^{+} \Big)
  \end{equation}
  where $Y^{-}$, $Y^{0}$ and $Y^{+}$ are modules over $\Rn$, $\R$
  and~$\Rp$, respectively, with an $\Rn$\nbd-linear homomorphisms
  $\upsilon^{-}$ and an $\Rp$\nbd-linear homomorphism $\upsilon^{+}$,
  such that the diagram of the adjoint $\R$\nbd-linear maps
  \begin{equation}
    \label{eq:sheaf_cond}
    Y^{-} \tensor_{\Rn} \R \rTo[l>=3em]^{\upsilon^{-}_\sharp}_{\iso} Y^{0}
    \lTo[l>=3em]^{\upsilon^{+}_{\sharp}}_{\iso} Y^{+} \tensor_{\Rp} \R
  \end{equation}
  consists of isomorphisms. This latter condition will be referred to
  as the \textit{sheaf condition}. A \textit{morphism}
  $f=(f^{-}, f^{0}, f^{+}) \colon \cY \rTo \cZ$ between sheaves is a
  commutative diagram of the form
  \begin{diagram}
    \cY &\quad & %
    Y^{-} \quad & \rTo^{\upsilon^{-}} & Y^{0} & \lTo^{\upsilon^{+}} &
    Y^{+} \\
    \dTo && \dTo<{f^{-}} && \dTo<{f^{0}} && \dTo<{f^{+}} \\
    \cZ && %
    Z^{-} & \rTo^{\zeta^{-}} & Z^{0} & \lTo^{\zeta^{+}} & Z^{+}
  \end{diagram}
  with $f^{-}$, $f^{0}$ and~$f^{+}$ homomorphisms of modules over
  $\Rn$, $\R$ and~$\Rp$, respectively.

  We call the sheaf $\cY$ a {\it vector bundle\/} if its constituent
  modules are finitely generated projective modules over their
  respective ground rings. The category of vector bundles (and all
  morphisms of sheaves between them) is denoted by~$\vb$.
\end{definition}

Specific examples of vector bundles are the {\it twisting sheaves\/}
\begin{displaymath}
  \OO k \ell = \quad \Big( \Rnn k \rTo^{\subseteq} \R \lTo^{\supseteq}
  \Rpn {-\ell} \Big)
\end{displaymath}
where $k$ and~$\ell$ are integers. (Note that the diagrams
$\OO k \ell$ may not be sheaves if the $\bZ$\nbd-graded ring~$\R$
fails to be \textit{strongly} graded.) --- Taking tensor product with
twisting sheaves defines an interesting operation on the category of
sheaves on~$\pp$:

\begin{definition}[Twisting {\cite[Definition~II.2.4]{P1}}]
  Let $\cY$ be a sheaf, and let $k, \ell \in \bZ$. We define the
  {\it $(k,\ell)${\rm th} twist of~$\cY$}, denoted $\tw \cY k \ell$,
  to be the sheaf
  \begin{displaymath}
    \tw \cY k \ell = \quad \Big( Y^{-} \tensor_{\Rn} \Rnn k \rTo Y^{0} 
    \tensor_{R} R \lTo Y^{+} \tensor_{\Rp} \Rpn {-\ell} \Big) \ ,
  \end{displaymath}
  with structure maps induced by those of~$\cY$ and the inclusion
  maps.
\end{definition}

\begin{definition}
  \label{def:psi}
  Let $k, \ell \in \bZ$. The {\it $(k,\ell)$th canonical sheaf
    functor}~$\Psi_{k,\ell}$ is defined by
  \begin{displaymath}
    \Psi_{k, \ell} \colon \rbPr {\Rd{0}} \rTo \vb \ , \quad P \mapsto P
    \tensor \OO k \ell
  \end{displaymath}
  where $\rbPr {\Rd{0}}$ is the category of finitely generated
  projective $\Rd{0}$\nbd-modules, and the symbol $P \tensor \OO k \ell$ denotes
  the sheaf
  \begin{displaymath}
    P \tensor_{R_{0}} \Rnn k \rTo P \tensor_{R_{0}} \R \lTo P
    \tensor_{R_{0}} \Rpn {-\ell} \ .
  \end{displaymath}
\end{definition}

\begin{definition}
  \label{def:cohomology}
  The \textit{sheaf cohomology modules} of the sheaf~$\cY$
  of~\eqref{eq:sheaf} are defined by the exact sequence
  \begin{displaymath}
    0 \rTo H^{0} \cY %
    \rTo Y^{-} \oplus Y^{+} %
    \rTo[l>=3em]^{\upsilon^{-} - \upsilon^{+}} Y^{0} %
    \rTo H^{1} \cY \rTo 0 \ ,
  \end{displaymath}
  that is, $H^{0} \cY = \ker (\upsilon^{-} - \upsilon^{+})$ and
  $H^{1} \cY = \coker (\upsilon^{-} - \upsilon^{+})$. We will also use
  the notation $\Gamma \cY$ for $H^{0} \cY$ and speak of
  \textit{global sections} of~$\cY$.
\end{definition}

The $\Rd{0}$\nbd-modules $H^{0} \cY$ and $H^{1} \cY$ depend
functorially on~$\cY$. Considering $\cY$ as a diagram of
$\Rd{0}$\nbd-modules we have isomorphisms
$H^{q} \cY = \invlim{}^{q} \cY$ for $q=0,1$.

\medbreak

One can explicitly compute the sheaf cohomology of twisting sheaves by
direct inspection \cite[Proposition~II.3.4]{P1}. Similarly, one can
show:

\begin{proposition}
  \label{prop:Gamma_Psi}
  For $P \in \rbPr{\Rd{0}}$ and $k,\ell \in \bZ$ there is an
  isomorphism
  \begin{displaymath}
    \Gamma \Psi_{k,\ell} P \iso \bigoplus_{j=-\ell}^{k} s_{j} P
  \end{displaymath}
  where $s_{j} P = P \tensor_{\Rd{0}} \Rd{j}$. The isomorphism is
  natural in~$P$ so that there are isomorphisms of functors
  \begin{align*}
    \Gamma \circ \Psi_{k,\ell} & \iso 0 \quad \text{if \(k+\ell <
                                 0\)}, \\
    \Gamma \circ \Psi_{0,0} & \iso \id \ , \\
    \Gamma \circ \Psi_{k, -k} & \iso s_{k} \ .
  \end{align*}
  Moreover, if $k + \ell \geq -1$ then
  $H^{1} \circ \Psi_{k,\ell} \cY = 0$. \qed
\end{proposition}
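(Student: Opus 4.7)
The plan is to unpack the definition of sheaf cohomology and read off both statements directly from the graded direct-sum decomposition of the modules appearing in $\Psi_{k,\ell} P = P \tensor \OO k \ell$. Using that $\R$ is strongly graded, so that $\Rnn k = \bigoplus_{j \leq k} \Rd j$, $\Rpn{-\ell} = \bigoplus_{j \geq -\ell} \Rd j$ and $\R = \bigoplus_{j \in \bZ} \Rd j$, and that tensoring with~$P$ over~$\Rd 0$ preserves direct sums, the defining exact sequence of Definition~\ref{def:cohomology} applied to $\Psi_{k,\ell} P$ becomes
\begin{displaymath}
  0 \rTo \Gamma \Psi_{k,\ell} P \rTo \Big(\bigoplus_{j \leq k} s_j P\Big) \oplus \Big(\bigoplus_{j \geq -\ell} s_j P\Big) \rTo[l>=3em]^{\delta} \bigoplus_{j \in \bZ} s_j P \rTo H^{1} \Psi_{k,\ell} P \rTo 0 \ ,
\end{displaymath}
where $\delta$ sends $\big((a_j), (b_j)\big)$ to the element with $j$th component $a_j - b_j$, with the conventions $a_j = 0$ for $j > k$ and $b_j = 0$ for $j < -\ell$.

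First I would compute $\ker\delta$. Componentwise vanishing forces $a_j = b_j$ for $-\ell \leq j \leq k$, $a_j = 0$ for $j < -\ell$, and $b_j = 0$ for $j > k$. If $k + \ell \geq 0$ the overlap is non-empty, and the only free data are the common values $(b_j)_{-\ell \leq j \leq k}$, which identify $\Gamma \Psi_{k,\ell} P$ with $\bigoplus_{j=-\ell}^k s_j P$. If $k + \ell < 0$ the two index ranges are disjoint, every component is forced to zero, and $\Gamma \Psi_{k,\ell} P = 0$, in agreement with the empty-sum convention. The two special identifications $\Gamma \circ \Psi_{0,0} \iso \id$ and $\Gamma \circ \Psi_{k,-k} \iso s_k$ are then the cases $(k, \ell) = (0,0)$ and $\ell = -k$, using $s_0 P = P \tensor_{\Rd 0} \Rd 0 = P$.

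For the vanishing of $H^1$ I would check surjectivity of $\delta$ whenever $k + \ell \geq -1$. The two half-lines $(-\infty, k]$ and $[-\ell, \infty)$ cover $\bZ$ precisely when $k + 1 \geq -\ell$; under this hypothesis each component $c_j$ on the right-hand side is hit by $\delta$, either via $a_j = c_j$ (if $j \leq k$) or via $b_j = -c_j$ (if $j \geq -\ell$).

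Naturality in~$P$ is immediate: every map in the exact sequence above is obtained by tensoring the fixed bimodule data of~$\OO k \ell$ with~$P$ over~$\Rd 0$, hence commutes with any $\Rd 0$\nbd-linear map $P \rTo P'$. There is no genuine obstacle—the proof is essentially a careful bookkeeping of index ranges in the two direct-sum decompositions and of the boundary map $\upsilon^- - \upsilon^+$; the only point that requires a moment's thought is the empty-sum convention that makes the unified formula $\Gamma \Psi_{k,\ell} P \iso \bigoplus_{j=-\ell}^{k} s_j P$ valid in all cases.
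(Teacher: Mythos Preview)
Your proof is correct and is precisely the ``direct inspection'' the paper alludes to; the paper does not give a proof of its own, merely citing the analogous computation for twisting sheaves in \cite[Proposition~II.3.4]{P1} and stating the result with a \qed. One small remark: the direct-sum decompositions $\Rnn k = \bigoplus_{j \leq k} \Rd j$ etc.\ hold for any $\bZ$\nbd-graded ring, not just strongly graded ones, so the phrase ``using that $\R$ is strongly graded'' is slightly misplaced---strong grading is needed upstream (so that $\OO k \ell$ is a genuine sheaf and $s_j P$ is projective) rather than in the computation itself.
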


\section{The algebraic $K$-theory of the projective line}

We let $\ch \vb$ denote the category of bounded chain complexes of
vector bundles; similarly, we denote by $\ch \vb_{0}$ the category of
bounded chain complexes in the category~$\vb_{0}$. A map~$f$ of vector
bundles will be called an {\it $h$-equiv\-a\-lence}, or a {\it
  quasi-isomorphism\/}, if $f^{?}$ is a quasi-isomorphism of chain
complexes of modules for each decoration $? \in \{-,0,+\}$. As weak
equivalences are defined homologically, they satisfy the saturation
and extension axioms. All categories mentioned have a cylinder functor
given by the usual mapping cylinder construction which satisfies the
cylinder axiom.

\begin{definition}[{\cite[Definition~III.1.1]{P1}}]
  \label{def:K_P1}
  The $K$-theory space of the projective line is defined to be
  \begin{displaymath}
    K(\pp) = \Omega |h\mathcal{S}_{\bullet} \ch\vb | \ ,
  \end{displaymath}
  where ``$h$'' stands for the category of $h$\nbd-equivalences.
\end{definition}

It is technically more convenient to use a certain subcategory
of~$\vb$, on which the global sections functor~$\Gamma$ is exact:

\begin{definition}
  The category $\vb_{0}$ is the full subcategory of~$\vb$ consisting
  of those objects $\cY$ satisfying
  \begin{displaymath}
    H^{1} \cY(k,\ell) = 0 \quad \text{for all \(k,\ell \in \bZ\) with
      \(k+\ell \geq 0\).}
  \end{displaymath}
\end{definition}

\begin{lemma}[{\cite[Corollary~III.1.4]{P1}}]
  \label{lem:reduce_to_vb0}
  The inclusion $\vb_{0} \subseteq \vb$ induces a homotopy equivalence
  \begin{displaymath}
    h \mathcal{S}_{\bullet} \ch \vb_{0} \rTo^{\simeq} h
    \mathcal{S}_{\bullet} \ch \vb  \ ,
  \end{displaymath}
  and hence a homotopy equivalence
  $\Omega |h \mathcal{S}_{\bullet} \ch \vb_{0}| \rTo^{\simeq}
  K(\pp)$. \qed
\end{lemma}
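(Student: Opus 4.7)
The plan is to apply \textsc{Waldhausen}'s approximation theorem to the inclusion $\ch \vb_{0} \subseteq \ch \vb$. Both hypotheses need to be checked. The first axiom is immediate: $h$-equivalences are defined componentwise on the underlying module complexes, so the inclusion both preserves and reflects them.

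For the second (approximation) axiom the essential task reduces to showing that every complex $\cD \in \ch \vb$ admits an $h$-equivalence $\cC' \rTo \cD$ with $\cC' \in \ch \vb_{0}$. Once such a $\vb_{0}$-representative of $\cD$ is available, the required factorisation of a given morphism $f \colon \cC \rTo \cD$ (with $\cC \in \ch \vb_{0}$) as a cofibration followed by a weak equivalence can be produced by standard mapping-cylinder manipulations carried out entirely inside $\ch \vb_{0}$, using that $\vb_{0}$ is closed under finite direct sums and inherits cylinders componentwise from $\vb$.

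The construction of $\cC'$ rests on Proposition~\ref{prop:Gamma_Psi}, which ensures that each canonical sheaf $\Psi_{k,\ell} P$ with $k + \ell \geq -1$ belongs to $\vb_{0}$. The key technical lemma to establish is a generation statement: every vector bundle $\cY$ admits an epimorphism $\Psi_{k,\ell} P \rTo \cY$ for some $k+\ell \geq -1$ and some finitely generated projective $\Rd{0}$-module $P$. Granting this, one resolves $\cY$ by iteratively covering the kernel, obtaining a projective-line analogue of a \textsc{Serre} resolution; the resolution has bounded length because sheaf cohomology on $\pp$ is concentrated in degrees $0$ and~$1$ by Definition~\ref{def:cohomology}. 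For a bounded complex $\cD \in \ch \vb$ one assembles the term-wise resolutions into a bounded double complex whose totalisation lies in $\ch \vb_{0}$ and maps quasi-isomorphically to $\cD$.

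The main obstacle is the generation statement itself. For a strongly $\bZ$\nbd-graded ring, it should follow from the sheaf condition~\eqref{eq:sheaf_cond} together with the finite generation of $Y^{-}$ over $\Rn$ and of $Y^{+}$ over $\Rp$: for $k$ and $\ell$ sufficiently large, finitely many generators of $Y^{-} \tensor_{\Rn} \Rnn{k}$ and $Y^{+} \tensor_{\Rp} \Rpn{-\ell}$ can be matched via the isomorphisms $\upsilon^{\pm}_{\sharp}$ to produce global sections of $\tw{\cY}{k}{\ell}$ that generate the sheaf, yielding a surjection from a free object of the form $\Psi_{k,\ell} \Rd{0}^{n}$. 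Verifying this matching rigorously -- in particular, checking that the combinatorics of the grading genuinely produces a \emph{surjection} rather than merely a dense image -- is the subtle step and is where the strongly-graded hypothesis is expected to be used essentially.
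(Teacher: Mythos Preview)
The paper does not give a proof of this lemma: it is imported from \cite[Corollary~III.1.4]{P1} and stated without argument. The only further hint in the present paper appears in the proof of Theorem~\ref{thm:NK_is_nil}, where it is remarked that ``the arguments of Lemma~III.1.3 and Corollary~III.1.4 of~\cite{P1} carry over verbatim'' to a variant situation, which is consistent with your strategy of invoking the approximation theorem after a suitable resolution lemma.

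Your outline is therefore aimed in the right direction, but one step is not justified as written. The sentence ``the resolution has bounded length because sheaf cohomology on~$\pp$ is concentrated in degrees~$0$ and~$1$'' is not a valid inference on its own: cohomological dimension~$1$ does not automatically bound the length of a resolution by objects of~$\vb_{0}$. What actually makes the resolution short is that, once you have an epimorphism $\cE \rTo \cY$ with $\cE \in \vb_{0}$ which remains surjective on~$H^{0}$ after every twist $(a,b)$ with $a+b \geq 0$, the long exact cohomology sequence forces $H^{1}$ of the kernel to vanish in the same range, so the kernel already lies in~$\vb_{0}$ and the resolution stops after one step. Arranging this stronger surjectivity is part and parcel of the generation statement you flag as the main obstacle, and is presumably the content of the cited Lemma~III.1.3 in~\cite{P1}; your sketch of that step via matching generators through~$\upsilon^{\pm}_{\sharp}$ is plausible but, as you say, is where the real work lies.
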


For $k+ \ell \geq -1$ the functor
$\Psi_{k,\ell} \colon \chp \rTo \vb_{0}$ is an exact functor between
\textsc{Waldhausen} categories (with quasi-isomorphisms and
$h$\nbd-equivalences as weak equivalences, and cofibrations the
monomorphisms with cokernel an object of the category under
consideration).

\begin{theorem}[{\cite[Theorem~III.5.1]{P1}}]
  \label{thm:HM}
  Suppose that $R = \bigoplus_{k \in \bZ} R_{k}$ is a strongly
  $\bZ$\nbd-graded ring. There is a homotopy equivalence of
  $K$\nbd-theory spaces
  \begin{displaymath}
    K(R_{0}) \times
    K(R_{0}) \rTo K(\pp)
  \end{displaymath}
  induced by the functor
  \begin{align*}
    \Psi_{-1,0} + \Psi_{0,0} \colon \chp \times \chp & \rTo \ch \vb_{0} \\
    (C,D) & \rlap{\ \(\mapsto\)} \hphantom{\rTo} \,\Psi_{-1,0}(C)
            \oplus \Psi_{0,0} (D) \ . \tag*{\qedsymbol}
  \end{align*}
\end{theorem}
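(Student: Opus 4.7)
The plan is to follow the classical Beilinson-resolution strategy from algebraic geometry, adapted to our Waldhausen categorical setting. By Lemma~\ref{lem:reduce_to_vb0} we may replace $\ch \vb$ with $\ch \vb_0$. Both canonical sheaf functors $\Psi_{-1,0}$ and $\Psi_{0,0}$ satisfy $k+\ell \geq -1$, so by Proposition~\ref{prop:Gamma_Psi} they take values in $\vb_0$ and are exact between Waldhausen categories; their direct sum thus induces a well-defined map of $K$-theory spaces. To show this map is a homotopy equivalence, I would construct a candidate homotopy inverse and verify the compositions on $K$-theory via a resolution argument.

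For the candidate inverse, I would use the global sections functor $\Gamma$ combined with an appropriate twisted variant. Specifically, for $\cY \in \vb_0$ consider the pair $\bigl(\Gamma(\tw \cY {-1} 0),\, \Gamma \cY\bigr)$. Using Proposition~\ref{prop:Gamma_Psi} one computes directly that on an object of the form $\Psi_{-1,0}(C) \oplus \Psi_{0,0}(D)$ this recovers $(C, D)$ up to natural isomorphism, so the composition ``inverse after $\Psi_{-1,0}+\Psi_{0,0}$'' is (naturally isomorphic to) the identity. The nontrivial direction is the other composition, where one must show that the two-term complex
\begin{equation*}
  \Psi_{-1,0}\bigl(\Gamma(\tw \cY {-1} 0)\bigr) \rTo \Psi_{0,0}(\Gamma \cY)
\end{equation*}
built from the counits of the relevant adjunctions is quasi-isomorphic to~$\cY$.

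This last assertion is the Beilinson resolution in our setting: every vector bundle $\cY \in \vb_0$ fits into a short exact sequence
\begin{equation*}
  0 \rTo \Psi_{-1,0}(P) \rTo \Psi_{0,0}(Q) \rTo \cY \rTo 0
\end{equation*}
with $P, Q \in \rbPr{\Rd{0}}$, and moreover naturally so, with $Q = \Gamma \cY$ and $P = \Gamma (\tw \cY {-1} 0)$. To prove this I would read off $Q = \Gamma \cY$ from Definition~\ref{def:cohomology}, verify that the evaluation map $\Psi_{0,0}(\Gamma \cY) \rTo \cY$ is a surjection of sheaves whose kernel lies in the essential image of $\Psi_{-1,0}$ (using that $\cY$ has vanishing higher cohomology after twist, since $\cY \in \vb_0$), and then identify the kernel as $\Psi_{-1,0}(P)$ by computing its global sections via the long exact cohomology sequence. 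Strong gradedness is crucial here: it ensures that all the relevant modules remain finitely generated projective, and it underpins the computation of $\Gamma \circ \Psi_{k,\ell}$. With the resolution in hand, Waldhausen's Additivity Theorem applied to the two-term filtration $\Psi_{-1,0}(P) \rightarrowtail \Psi_{0,0}(Q) \twoheadrightarrow \cY$ shows that on $K$-theory the composite of $\Psi_{-1,0}+\Psi_{0,0}$ with our candidate inverse equals the identity.

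The main obstacle is the Beilinson resolution itself --- both establishing it for an arbitrary $\cY \in \vb_0$ (as opposed to for a single line bundle), and ensuring naturality and exactness in the Waldhausen sense so that Additivity applies cleanly. Concretely, one must show that the kernel of $\Psi_{0,0}(\Gamma \cY) \twoheadrightarrow \cY$ is again a vector bundle (i.e.\ finitely generated projective in each component) and arises from a finitely generated projective $\Rd{0}$-module; this requires combining the strong-gradedness of~$\R$ (via \cite[Proposition~1.6]{MR3704930}) with the sheaf condition \eqref{eq:sheaf_cond} and the cohomological vanishing defining~$\vb_0$. A secondary technicality is passing from objects to bounded complexes: one handles this by applying the resolution levelwise and taking total complexes, using that both $\Psi_{-1,0}$ and $\Psi_{0,0}$ commute with direct sums and preserve quasi-isomorphisms.
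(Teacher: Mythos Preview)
The paper does not prove this theorem; it is quoted from \cite[Theorem~III.5.1]{P1} and stated with a \qedsymbol. There is thus no proof here to compare against directly, though Lemma~\ref{lem:homotopy_inverse_alpha} later constructs an explicit homotopy inverse for the closely related map~$\alpha$ and gives a clear indication of what is actually involved.

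Your overall strategy (Beilinson-type resolution plus Additivity) is sound, but the specific candidate inverse and the claimed short exact sequence are wrong. Take $\cY = \Psi_{-1,0}(C)$ with $C \neq 0$. By Proposition~\ref{prop:Gamma_Psi} we have $\Gamma\cY = \Gamma\Psi_{-1,0}(C) = 0$ since $-1+0 < 0$, and likewise $\Gamma\big(\tw{\cY}{-1}{0}\big) = \Gamma\Psi_{-2,0}(C) = 0$. So your candidate inverse sends $\Psi_{-1,0}(C) \oplus \Psi_{0,0}(D)$ to $(0,D)$, not $(C,D)$; the ``direct computation'' you announce simply fails. For the same reason the evaluation map $\Psi_{0,0}(\Gamma\cY) \to \cY$ is the zero map $0 \to \Psi_{-1,0}(C)$ in this case, which is certainly not surjective, and no short exact sequence $0 \to \Psi_{-1,0}(P) \to \Psi_{0,0}(Q) \to \cY \to 0$ with $Q = \Gamma\cY = 0$ can exist.

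The underlying issue is that a Beilinson decomposition on~$\pp$ requires \emph{derived} global sections: the component in the $\Psi_{-1,0}$ direction is governed by an $H^{1}$ of a negative twist, not by an~$H^{0}$. The definition of~$\vb_{0}$ only forces $H^{1}\cY(k,\ell) = 0$ for $k+\ell \geq 0$, so twisting to $(-1,0)$ leaves the vanishing range. If you look at the inverse~$\Xi$ in Lemma~\ref{lem:homotopy_inverse_alpha} (for the reparametrised map~$\alpha$) you will see a five-term expression in $\Gamma\cY$, $\Gamma\cY(1,0)$, $\Gamma\cY(1,-1)$ and their suspensions; the suspensions encode precisely the missing $H^{1}$ contributions via additivity. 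A correct argument along your lines needs a candidate of comparable complexity, and the verification that both composites are homotopic to the identity is correspondingly more elaborate than the single short exact sequence you propose.
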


\section{The nil terms}

\label{sec:nil}

\newcommand{\twend}{\mathrm{Tw}^{+}\mathrm{End}(\Rd{0})}
\newcommand{\twendm}{\mathrm{Tw}^{-}\mathrm{End}(\Rd{0})}

\subsection*{The category of twisted endomorphisms}

Let $M$ be an $\Rd{0}$\nbd-module. A \textit{(positive) twisted
  endomorphism of~$M$} is an $\Rd{0}$\nbd-linear map
  \begin{displaymath}
\alpha \colon M \tensor_{\Rd{0}} \Rd{1} \rTo M \tensor_{\Rd{0}} \Rd{0} \ .
\end{displaymath}
The collection of such pairs $(M, \alpha)$, for various modules~$M$
and their twisted endomorphisms, forms a category $\twend$; a morphism
$f \colon (M, \alpha) \rTo (N, \beta)$ is an $\Rd{0}$\nbd-linear map
$f \colon M \rTo N$ with
$\beta \circ (f \tensor \id_{R_1}) = (f \tensor \id_{\Rd{0}}) \circ
\alpha$.
--- The category $\twendm$ of (negative) twisted endomorphisms of the
form $M \tensor_{\Rd{0}} \Rd{-1} \rTo M \tensor_{\Rd{0}} \Rd{0}$ is
defined analogously.

\medbreak

In case of a strongly $\bZ$\nbd-graded ring~$\R$ we have the equality
$\Rd{n+1} = \Rd{n}\Rd{1}$, or (what is the same) the isomorphism
$\Rd{n+1} \iso \Rd{n} \tensor_{\Rd{0}} \Rd{1}$. Given an object
$(M, \alpha)$ of~$\twend$, we can thus recursively define the
\textit{$n$\textrm{\upshape th} iteration of~$\alpha$} to be the map
\begin{displaymath}
  \alpha^{(n)} \colon M \tensor_{\Rd{0}} \Rd{n} \rTo %
  M \tensor_{\Rd{0}} \Rd{0} \qquad (n \geq 0)
\end{displaymath}
determined by
\begin{displaymath}
  \alpha^{(0)} = \id  \qqand \alpha^{(1)} = \alpha \ ,  
\end{displaymath}
with $\alpha^{(n+1)}$ being the composition
\begin{multline*}
  M \tensor_{\Rd{0}} \Rd{n+1} \iso %
  M \tensor_{\Rd{0}} \Rd{n} \tensor_{\Rd{0}} \Rd{1} %
  \rTo^{\alpha^{(n)} \tensor \id_{\Rd{1}}} %
  M \tensor_{\Rd{0}} \Rd{0} \tensor_{\Rd{0}} \Rd{1} \\ \iso %
  M \tensor_{\Rd{0}} \Rd{1} \rTo^{\alpha} M \tensor_{\Rd{0}} \Rd{0} \ .
\end{multline*}
We say that the twisted endomorphism $\alpha$ of~$M$ is
\textit{nilpotent} if $\alpha^{(n)} = 0$ for $n \gg 0$.

\medbreak

Every $\Rp$\nbd-module~$M$ determines an object $(M, \mu)$
of~$\twend$, with $M$ considered as an $\Rd{0}$\nbd-module by
restriction of scalars, and $\mu$ defined by
$m \tensor r \mapsto mr \tensor 1$ for $m \in M$ and $r \in
\Rd{1}$. There results a functor
\begin{displaymath}
  \Phi = \Phi^{+} \colon \Mod{\Rp} \rTo \twend \ ;
\end{displaymath}
an analogous construction provides us with a functor
\begin{displaymath}
  \hphantom{\Phi =} \Phi^{-} \colon \Mod{\Rn} \rTo \twendm \ .
\end{displaymath}

\begin{lemma}
  The functors $\Phi=\Phi^{+}$ and $\Phi^{-}$ are isomorphisms of
  categories.
\end{lemma}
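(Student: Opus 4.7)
The plan is to produce explicit inverses $\Psi^{+} \colon \twend \rTo \Mod{\Rp}$ and $\Psi^{-} \colon \twendm \rTo \Mod{\Rn}$; by symmetry I concentrate on the positive case. Given $(M, \alpha) \in \twend$, I equip the underlying $\Rd{0}$\nbd-module $M$ with a right $\Rp$\nbd-action by declaring, for $r \in \Rd{n}$ with $n \geq 0$,
\[ m \cdot r \;:=\; \alpha^{(n)}(m \tensor r) \;\in\; M \]
and extending $\Rd{0}$\nbd-linearly across the direct sum decomposition $\Rp = \bigoplus_{n \geq 0} \Rd{n}$. The strong grading is crucial here, as it provides the bimodule isomorphisms $\Rd{n} \tensor_{\Rd{0}} \Rd{1} \iso \Rd{n+1}$ used in the very definition of the iterates $\alpha^{(n)}$, and it identifies $M \tensor_{\Rd{0}} \Rd{0}$ with $M$.

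First I verify that the prescription above defines a genuine right $\Rp$\nbd-module structure. The unit axiom follows from $\alpha^{(0)} = \id$, and $\Rd{0}$\nbd-bilinearity is immediate because each $\alpha^{(n)}$ is $\Rd{0}$\nbd-bilinear. The essential point is associativity: for $r \in \Rd{k}$ and $s \in \Rd{\ell}$, the identity $(m \cdot r) \cdot s = m \cdot (rs)$ amounts to
\[ \alpha^{(\ell)} \big( \alpha^{(k)}(m \tensor r) \tensor s \big) \;=\; \alpha^{(k + \ell)} (m \tensor rs) \ . \]
I prove this by induction on $\ell$, the base case $\ell = 0$ being trivial and the inductive step being a direct unwinding of the recursive definition of $\alpha^{(\ell+1)}$ together with associativity of the tensor product over $\Rd{0}$ and the strong-grading isomorphism $\Rd{k+\ell} \tensor_{\Rd{0}} \Rd{1} \iso \Rd{k+\ell+1}$.

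Next I show that morphisms correspond. A morphism $f \colon (M, \alpha) \rTo (N, \beta)$ in $\twend$ is by definition an $\Rd{0}$\nbd-linear map satisfying $\beta \circ (f \tensor \id_{\Rd{1}}) = f \circ \alpha$. An easy induction on $n$ upgrades this to $\beta^{(n)} \circ (f \tensor \id_{\Rd{n}}) = f \circ \alpha^{(n)}$ for all $n \geq 0$, which is exactly the statement that $f$ commutes with the $\Rp$\nbd-actions built from $\alpha$ and $\beta$ \emph{via} $\Psi^{+}$. Conversely, any $\Rp$\nbd-linear map is automatically $\Rd{0}$\nbd-linear and satisfies the compatibility with $\alpha$ obtained by restricting the action to~$\Rd{1}$.

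Finally I check that $\Phi^{+}$ and $\Psi^{+}$ are strict inverses of one another. For an $\Rp$\nbd-module $M$, the twisted endomorphism $\mu$ produced by $\Phi^{+}$ satisfies $\mu^{(n)}(m \tensor r) = mr$ for $r \in \Rd{n}$ (by induction on $n$, using multiplicativity of the $\Rp$\nbd-action), so re-applying $\Psi^{+}$ returns the original module $M$ on the nose. Conversely, for $(M, \alpha) \in \twend$, the twisted endomorphism attached by $\Phi^{+}$ to $\Psi^{+}(M, \alpha)$ sends $m \tensor r$ with $r \in \Rd{1}$ to $m \cdot r = \alpha^{(1)}(m \tensor r) = \alpha(m \tensor r)$, recovering $\alpha$. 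The only real obstacle is careful bookkeeping in the associativity induction; no conceptual difficulty is hiding there.
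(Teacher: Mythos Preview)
Your proposal is correct and amounts to spelling out explicitly what the paper compresses into a one-line appeal to the theory of modules over tensor rings: since the strong grading makes $\Rp$ the tensor ring of~$\Rd{1}$ over~$\Rd{0}$, giving an $\Rp$\nbd-module is the same as giving an $\Rd{0}$\nbd-module with a twisted endomorphism, and your construction of~$\Psi^{+}$ together with the associativity induction is precisely the standard proof of this fact. There is no genuine difference in strategy, only in level of detail.
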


\begin{proof}
  This is just the theory of modules over tensor rings, since (thanks
  to the strong grading) $\Rp$ is the tensor ring of~$\Rd{1}$
  over~$\Rd{0}$, and $\Rn$ is the tensor ring of~$\Rd{-1}$
  over~$\Rd{0}$.
\end{proof}

As a matter of notation, for $(M, \alpha) \in \twend$ we will denote
the $\Rp$\nbd-module $\Phi\inv(M, \alpha)$ simply by the symbol~$M$;
if we wish to stress the twisted endomorphism, we shall
write~$M^{\alpha}$. The module structure of~$M^{\alpha}$ and the maps
$\alpha^{(n)}$ are related by the square diagram~\eqref{diag:alpha}
below;
\begin{diagram}[textflow,LaTeXeqno]
  \label{diag:alpha}
  M \tensor_{\Rd {0}} \Rd{n} & \rTo^{\alpha^{(n)}} &
  M \tensor_{\Rd{0}} \Rd{0} \\
  \dTo<{\iso} && \dTo<{\iso} \\
  M^{\alpha} \tensor_{\Rp} \Rpn{n} & \rTo[l>=4em] & M^{\alpha}
  \tensor_{\Rp} \Rp
\end{diagram}
the vertical maps, induced by inclusions, are isomorphisms of
$\Rd{0}$\nbd-modules (\cite[Proposition~I.2.12]{P1}), and the bottom
horizontal map is the obvious one, mapping
$x \tensor r \in M^{\alpha} \tensor_{\Rp} \Rpn{n}$ to
$x \tensor r \in M^{\alpha} \tensor_{\Rp} \Rp$. The diagram commutes.

\subsection*{The characteristic sequence of a twisted endomorphism}

For a strongly $\bZ$\nbd-graded ring~$\R$ we have the equality
$\Rd{1} \Rd{-1} = \Rd{0}$, by definition of strongly graded. In
particular, there exist finitely many elements
$x_{j}^{(1)} \in \Rd {1}$ and $y_{j}^{(-1)} \in \Rd {-1}$ so that
$\sum_{j} x_{j}^{(1)} y_{j}^{(-1)} = 1$. For any $\Rp$\nbd-module~$M$
there results a well-defined map of $\Rp$\nbd-modules
\begin{equation}
  \label{eq:tau}
  \chi_{M} = \chi \colon M \tensor_{\Rd {0}} \Rpn {1} \rTo M
  \tensor_{\Rd {0}} \Rp \ , \quad m \tensor r \mapsto m \tensor r -
  \sum_{j} m x_{j}^{(1)} \tensor y_{j}^{(-1)} r
\end{equation}
which does not depend on the specific choice of elements $x_{j}^{(1)}$
and $y_{j}^{(-1)}$ \cite[\S6]{more_on_fd}.

\begin{proposition}[Characteristic sequence of a twised endomorphism]
  \label{prop:characteristic}
  Let $(M, \alpha)$ be an object of~$\twend$. There is a short exact
  sequence of $\Rp$\nbd-modules, natural in $(M,\alpha)$,
  \begin{equation}
    \label{eq:characteristic}
    0 \rTo M^{\alpha} \tensor_{\Rd{0}} \Rpn {1} \rTo^{\chi_{M}} %
    M^{\alpha} \tensor_{\Rd{0}} \Rp \rTo^{\pi} M^{\alpha} \rTo 0 \ ,
  \end{equation}
  with $\pi(m \tensor r) = mr$. The sequence is split exact as a
  sequence of $\Rd{0}$\nbd-modules.
\end{proposition}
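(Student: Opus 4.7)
The plan is to verify the formal parts first and then exploit the $\bZ$\nbd-grading to handle the main exactness claim. Writing $A_{k} := M \tensor_{\Rd{0}} \Rd{k}$, the $\Rd{0}$-bimodule decompositions $\Rp = \bigoplus_{k \geq 0} \Rd{k}$ and $\Rpn{1} = \bigoplus_{k \geq 1} \Rd{k}$ yield $\Rd{0}$-module decompositions
\[
M \tensor_{\Rd{0}} \Rp \iso \bigoplus_{k \geq 0} A_{k}
\qquad\text{and}\qquad
M \tensor_{\Rd{0}} \Rpn{1} \iso \bigoplus_{k \geq 1} A_{k}.
\]
The key observation is that $\chi_{M}$ restricted to the summand $A_{k}$ (for $k \geq 1$) takes values in $A_{k} \oplus A_{k-1}$ and has the form $\id_{A_{k}} - \sigma_{k}$, where $\sigma_{k} \colon A_{k} \rTo A_{k-1}$ is given by $m \tensor r \mapsto \sum_{j} m x_{j}^{(1)} \tensor y_{j}^{(-1)} r$; well-definedness of $\sigma_{k}$ is inherited from that of $\chi_{M}$, established in \cite{more_on_fd}, and the degree calculation $y_{j}^{(-1)} r \in \Rd{k-1}$ forces the image to land in the asserted summands.

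I would then dispose of two easy verifications. First, $\pi \circ \chi_{M} = 0$: this follows from $\sum_{j} x_{j}^{(1)} y_{j}^{(-1)} = 1$ combined with associativity of the $\Rp$-action on~$M^{\alpha}$. Second, the $\Rd{0}$-linear map $s \colon M \rTo M \tensor_{\Rd{0}} \Rp$, $m \mapsto m \tensor 1$, satisfies $\pi \circ s = \id_{M}$, so $\pi$ is surjective; once exactness is established, this $s$ provides the asserted $\Rd{0}$-module splitting. Note that $s$ is not $\Rp$-linear, which is precisely why the splitting occurs only at the $\Rd{0}$-level.

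Exactness then drops out of the triangular-form observation. For injectivity of $\chi_{M}$: if $v = v_{1} + \ldots + v_{N}$ with $v_{k} \in A_{k}$ and $v_{N} \neq 0$, then the degree-$N$ component of $\chi_{M}(v)$ equals~$v_{N}$, which is nonzero. For the inclusion $\ker \pi \subseteq \im \chi_{M}$: given $x = x_{0} + \ldots + x_{N} \in \ker \pi$ with $x_{k} \in A_{k}$, I argue by induction on~$N$. The base case $N = 0$ reads $\pi(x_{0}) = x_{0} = 0$, so $x = 0$; for $N \geq 1$, the element $x - \chi_{M}(x_{N})$ still lies in $\ker \pi$ (because $\im \chi_{M} \subseteq \ker \pi$) and has top degree at most $N-1$, so by the inductive hypothesis it is in $\im \chi_{M}$, and hence so is~$x$. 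The only mildly delicate point is the identification of $\chi_{M}|_{A_{k}}$ with $\id - \sigma_{k}$ at the very start; after that the argument is a transparent induction, and naturality in $(M, \alpha)$ is formal from the construction.
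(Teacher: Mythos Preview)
Your argument is correct. The paper's own proof is a one-line citation: it identifies the sequence with the ``canonical resolution'' of \cite[Lemma~6.2]{more_on_fd} for the $\Rp$\nbd-module~$M^{\alpha}$ and stops there. You instead give a self-contained elementary proof using the graded decomposition $\Rp = \bigoplus_{k \geq 0} \Rd{k}$, observing that $\chi_{M}$ is ``upper triangular with identity on the diagonal'' with respect to the induced grading on $M \tensor_{\Rd{0}} \Rp$, which immediately yields injectivity and makes the kernel computation a clean downward induction on top degree. What your approach buys is transparency and independence from the external reference; what the paper's approach buys is brevity and a link to the general theory of canonical resolutions developed elsewhere. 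Your argument is presumably close in spirit to what the cited lemma does, but as written it stands on its own.
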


This generalises the usual characteristic sequence of a module
equipped with an endomorphism \cite[Proposition~XII.1.1]{MR0249491}.

\begin{proof}
  The sequence~\eqref{eq:characteristic} is nothing but the
  ``canonical resolution'' of \cite[Lemma~6.2]{more_on_fd} for the
  $\Rp$\nbd-module~$M^{\alpha}$ associated with~$(M,\alpha)$.
\end{proof}

\subsection*{Chain complexes of twisted endomorphisms}

A bounded chain complex in the category~$\twend$, that is, an object
of the category $\ch \twend$, consists of a pair $(C, \alpha)$ where
$C$ is a bounded chain complex of $\Rd{0}$\nbd-modules, and
\begin{displaymath} \alpha \colon C \tensor_{\Rd{0}} \Rd{1} %
  \rTo C \tensor_{\Rd{0}} \Rd{0}
\end{displaymath}
is a map of $\Rd{0}$\nbd-module complexes.

\begin{definition}
  The \textit{mapping half-torus of $(C, \alpha) \in \ch \twend$} is
  the complex of $\Rp$\nbd-modules
  \begin{displaymath}
    \torus(C,\alpha) = \cone \Big( \chi_{C} \colon %
    C \tensor_{\Rd{0}} \Rpn{1} %
    \rTo C \tensor_{\Rd{0}} \Rp \Big) \ ,
  \end{displaymath}
  with $\chi_{C} = \chi$ defined in~(\ref{eq:tau}).
\end{definition}

The mapping half-torus provides an exact additive functor defined on the
category $\twend$ to the category of chain complexes of
$\Rp$\nbd-modules. It comes equipped with a natural $\Rp$\nbd-linear
map $\torus(C,\alpha) \rTo C^{\alpha}$ induced from the short exact
sequence~\eqref{eq:characteristic}.

\begin{proposition}[{\cite[Corollaries~6.5 and~6.8]{more_on_fd}}]
  \label{prop:R0-Rp-fd}
  Suppose that $(C,\alpha)$ is a bounded chain complex in~$\twend$.
  \begin{enumerate}
  \item The map $\torus(C,\alpha) \rTo C^{\alpha}$ is a
    quasi-iso\-mor\-phism.
  \item If $C$ is a complex of projective $\Rd{0}$\nbd-modules, the
    map $\torus(C,\alpha) \rTo C^{\alpha}$ is a chain homotopy
    equivalence of $\Rd{0}$\nbd-module complexes.
  \item If $C$ is an $\Rd{0}$\nbd-finitely dominated complex of
    projective $\Rp$\nbd-modules, then $\torus(C,\alpha)$ is
    $\Rp$\nbd-finitely dominated. \qed
  \end{enumerate}
\end{proposition}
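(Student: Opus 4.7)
The overall strategy is to apply Proposition~\ref{prop:characteristic} at each chain level of~$(C,\alpha)$. Naturality in~$(M,\alpha)$ produces a short exact sequence of bounded chain complexes of $\Rp$-modules
\begin{displaymath}
  0 \rTo C \tensor_{\Rd{0}} \Rpn{1} \rTo^{\chi_{C}} C \tensor_{\Rd{0}} \Rp \rTo^{\pi} C^{\alpha} \rTo 0
\end{displaymath}
which is levelwise $\Rd{0}$-split by $s \colon m \mapsto m \tensor 1$; the mapping cone $\torus(C,\alpha) = \cone(\chi_{C})$ carries a natural map to~$C^{\alpha}$ induced by~$\pi$.

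For part~(1), I invoke the standard fact that for a short exact sequence of chain complexes $0 \rTo A \rTo B \rTo D \rTo 0$ the natural map $\cone(A \rTo B) \rTo D$ is a quasi-isomorphism (compare the two long exact sequences in homology). For part~(2), the strongly graded hypothesis guarantees that $\Rp$ and $\Rpn{1}$ are direct sums of finitely generated projective $\Rd{0}$-modules, hence are $\Rd{0}$-projective; combined with the assumption that $C$ consists of projective $\Rd{0}$-modules, this shows that $C \tensor_{\Rd{0}} \Rpn{1}$, $C \tensor_{\Rd{0}} \Rp$, $\torus(C,\alpha) = \cone(\chi_C)$, and $C^{\alpha}$ are all bounded complexes of projective $\Rd{0}$-modules. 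The quasi-isomorphism of part~(1), viewed as a map of $\Rd{0}$-module complexes between bounded complexes of projectives, is therefore automatically a chain homotopy equivalence over~$\Rd{0}$.

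For part~(3), a complex of projective $\Rp$-modules is automatically a complex of projective $\Rd{0}$-modules, since $\Rp$ is $\Rd{0}$-projective by the preceding paragraph. Using $\Rd{0}$-finite domination of~$C$, I select an $\Rd{0}$-chain homotopy equivalence $C \simeq P$ for a bounded complex~$P$ of finitely generated projective $\Rd{0}$-modules. Tensoring over~$\Rd{0}$ with~$\Rpn{1}$ and~$\Rp$, both of which are finitely generated projective right $\Rp$-modules by the strongly graded hypothesis recorded earlier in the paper, yields $\Rp$-chain homotopy equivalences $C \tensor_{\Rd{0}} \Rpn{1} \simeq P \tensor_{\Rd{0}} \Rpn{1}$ and $C \tensor_{\Rd{0}} \Rp \simeq P \tensor_{\Rd{0}} \Rp$, whose targets are bounded complexes of finitely generated projective $\Rp$-modules. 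Hence both source and target of~$\chi_{C}$ are $\Rp$-finitely dominated, and Lemma~\ref{lem:K-fd-2-of-3} applied to the short exact sequence
\begin{displaymath}
  0 \rTo C \tensor_{\Rd{0}} \Rp \rTo \cone(\chi_{C}) \rTo (C \tensor_{\Rd{0}} \Rpn{1})[1] \rTo 0
\end{displaymath}
delivers the $\Rp$-finite domination of~$\torus(C,\alpha)$. The main obstacle I anticipate is the careful tracking of module structures, in particular ensuring that the $\Rd{0}$-chain homotopy equivalences really induce $\Rp$-chain homotopy equivalences after base change; everything else is essentially bookkeeping, supported by the fact that the strongly graded hypothesis has already been packaged in the right form in the earlier sections of the paper.
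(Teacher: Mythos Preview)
Your proposal is correct. The paper itself does not supply a proof of this proposition at all: it simply cites \cite[Corollaries~6.5 and~6.8]{more_on_fd} and places a \qed{} symbol. Your argument is precisely the expected one, built from the ingredients the paper has already assembled --- the characteristic sequence of Proposition~\ref{prop:characteristic}, the fact that each~$\Rd{k}$ (and hence $\Rp$, $\Rpn{1}$) is finitely generated projective over~$\Rd{0}$ on both sides, and the 2-of-3 property from Lemma~\ref{lem:K-fd-2-of-3}. The only delicate point, which you flag yourself, is that an $\Rd{0}$\nbd-linear chain homotopy $C \simeq P$ becomes an $\Rp$\nbd-linear chain homotopy after applying $\nix \tensor_{\Rd{0}} \Rp$ (respectively $\nix \tensor_{\Rd{0}} \Rpn{1}$); this holds because these are additive functors landing in $\Rp$\nbd-modules, and additive functors preserve chain homotopies.
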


\subsection*{Homotopy nilpotent twisted endomorphisms}

Let $(C, \alpha)$ be a chain complex in the category~$\twend$. We say
that $\alpha$ is \textit{homotopy nilpotent} if the chain map
\begin{equation}
  \label{eq:alpha-n}
  \alpha^{(n)} \colon C \tensor_{\Rd{0}} \Rd{n} \rTo %
  C \tensor_{\Rd{0}} \Rd{0}
\end{equation}
is null homotopic for some (equivalently, all) sufficiently large
$n \geq 0$. Using diagram~\eqref{diag:alpha}, this amounts to saying
that the map of $\Rd{0}$\nbd-module complexes
\begin{displaymath}
  C^{\alpha} \tensor_{\Rp} \Rpn{n} \rTo C^{\alpha} \tensor_{\Rp} \Rp = C^{\alpha}
\end{displaymath}
(induced by the inclusion map $\Rpn{n} \subseteq \Rp$) is null
homotopic for $n \gg 0$. Since $\Rpn {n}$ is an invertible
$\Rp$\nbd-bimodule with inverse~$\Rpn{-n}$, this in turn is equivalent
to
\begin{equation}
  \label{eq:null}
  C^{\alpha} = C^{\alpha} \tensor_{\Rp} \Rp \rTo C^{\alpha} \tensor_{\Rp} \Rpn{-n}
\end{equation}
being null homotopic for $n \gg 0$. Under suitable finiteness
assumptions, this is equivalent to the stronger condition that the
map~\eqref{eq:null} is null homotopic for $n \gg 0$ as a map of
$\Rp$\nbd-module complexes:

\begin{lemma}
  \label{lem:middle-acyclic}
  Let $C$ be an $\Rp$\nbd-finitely dominated complex of
  $\Rp$\nbd-modules. The following statements are equivalent:
  \begin{enumerate}
  \item The induced complex $C \tensor_{\Rp} \R$ is acyclic.
  \item For $n \gg 0$ the obvious map
    $\nu_{0,n} \colon C \tensor_{\Rp} \Rpn{0} \rTo C \tensor_{\Rp}
    \Rpn {-n}$,
    considered as a map of $\Rp$\nbd-module chain complexes, is null
    homotopic.
  \item For $n \gg 0$ the obvious map
    $\nu_{0,n} \colon C \tensor_{\Rp} \Rpn{0} \rTo C \tensor_{\Rp}
    \Rpn {-n}$,
    considered as a map of $\Rd{0}$\nbd-module chain complexes, is
    null homotopic.
  \end{enumerate}
  If these equivalent conditions hold, the complex~$C$ is
  $\Rd{0}$-finitely dominated.
\end{lemma}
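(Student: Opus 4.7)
The plan is to establish the cycle $(1) \Rightarrow (2) \Rightarrow (3) \Rightarrow (1)$ and then to deduce $\Rd{0}$\nbd-finite domination, which will be the main obstacle.

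For $(1) \Rightarrow (2)$, I would replace $C$ by a bounded complex $P$ of finitely generated projective $\Rp$\nbd-modules, using $\Rp$\nbd-finite domination. Then $P \tensor_{\Rp} \R$ is a bounded acyclic complex of projective $\R$\nbd-modules, hence contractible; choose an $\R$\nbd-linear chain contraction $h$. Under the tensor-hom adjunction, $h$ corresponds to an $\Rp$\nbd-linear map $\tilde h \colon P \rTo P \tensor_{\Rp} \R$ of degree $+1$ satisfying $d \tilde h + \tilde h d = \nu_{0,\infty}$, the canonical inclusion. Since each $P_k$ is finitely generated over $\Rp$ and $P \tensor_{\Rp} \R = \dirlim_n P \tensor_{\Rp} \Rpn{-n}$, the values of $\tilde h$ on a finite set of generators involve only finitely many elements of $\R$; combined with the boundedness of~$P$, this forces a uniform $N$ for which $\tilde h$ factors $\Rp$\nbd-linearly through $P \tensor_{\Rp} \Rpn{-N}$, producing the required null-homotopy of $\nu_{0,N}$.

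The implication $(2) \Rightarrow (3)$ is immediate by restriction of scalars. For $(3) \Rightarrow (1)$, recall from Example~\ref{ex:strong_implies_diamond} (and its positive analogue) that $\R = \dirlim_n \Rpn{-n}$ is a filtered colimit of finitely generated projective $\Rp$\nbd-modules, so both $\R$ and each~$\Rpn{-n}$ are flat over~$\Rp$; hence $H_*(C \tensor_{\Rp} \R) \iso H_*(C) \tensor_{\Rp} \R$ and likewise for~$\Rpn{-n}$. Any $\Rd{0}$\nbd-null-homotopy of $\nu_{0,n}$ induces the zero map on homology, so the canonical map $H_*(C) = H_*(C) \tensor_{\Rp} \Rp \rTo H_*(C) \tensor_{\Rp} \R$, $x \mapsto x \tensor 1$, is zero. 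Since $H_*(C) \tensor_{\Rp} \R$ is generated as a right $\R$\nbd-module by elements of this form, it vanishes, so $C \tensor_{\Rp} \R$ is acyclic.

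For the $\Rd{0}$\nbd-finite-domination claim, with $P$ still an $\Rp$\nbd-projective model of $C$, the strong-grading isomorphism $\Rpn{n} \iso \Rp \tensor_{\Rd{0}} \Rd{n}$ combined with diagram~\eqref{diag:alpha} identifies $\nu_{0,n}$ with $\alpha^{(n)} \tensor_{\Rd{0}} \id_{\Rd{-n}}$; since $\Rd{-n}$ is an invertible $\Rd{0}$\nbd-bimodule, condition~(3) is equivalent to the iterated map $\alpha^{(n)} \colon P \tensor_{\Rd{0}} \Rd{n} \rTo P$ being $\Rd{0}$\nbd-null-homotopic for some~$n$. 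Tensoring the short exact sequence $0 \rTo \Rpn{n} \rTo \Rp \rTo \Rp/\Rpn{n} \rTo 0$ of $\Rp$\nbd-bimodules with $P$ over~$\Rp$ yields a short exact sequence of $\Rd{0}$\nbd-module complexes
\begin{equation*}
  0 \rTo P \tensor_{\Rp} \Rpn{n} \rTo P \rTo P \tensor_{\Rp} (\Rp/\Rpn{n}) \rTo 0 \ ,
\end{equation*}
in which the left term is $\Rd{0}$\nbd-isomorphic to $P \tensor_{\Rd{0}} \Rd{n}$, with the inclusion corresponding exactly to $\alpha^{(n)}$. Since $\Rp/\Rpn{n} \iso \bigoplus_{0 \leq j < n} \Rd{j}$ is a finitely generated projective $\Rd{0}$\nbd-bimodule, the quotient $P \tensor_{\Rp} (\Rp/\Rpn{n})$ is a bounded complex of finitely generated projective $\Rd{0}$\nbd-modules. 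Null-homotopy of $\alpha^{(n)}$ then splits the corresponding exact triangle in the derived category of $\Rd{0}$\nbd-modules, exhibiting $P$ as a retract of $P \tensor_{\Rp} (\Rp/\Rpn{n})$, and hence as $\Rd{0}$\nbd-finitely dominated.
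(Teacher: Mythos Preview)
Your argument is correct and follows the same overall strategy as the paper. Two points of execution differ slightly. For $(3)\Rightarrow(1)$ you invoke flatness of~$\R$ and of~$\Rpn{-n}$ over~$\Rp$ to identify $H_*(C\tensor_{\Rp}\R)$ with $H_*(C)\tensor_{\Rp}\R$ and then kill it by observing that the unit map $x\mapsto x\tensor 1$ vanishes; the paper instead shows that \emph{all} translated maps $\nu_{k,n}$ are $\Rd{0}$\nbd-null-homotopic (via $\nu_{k,n}\iso\nu_{0,n}\tensor_{\Rd{0}}\Rd{-k}$) and concludes by a cofinality argument in the filtered colimit $\dirlim_k H_*(C\tensor_{\Rp}\Rpn{-k})$. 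Your route is a little more direct. For the finite-domination step the paper uses the injection $\nu_{0,N}\colon C\hookrightarrow C\tensor_{\Rp}\Rpn{-N}$ with cokernel $C\tensor_{\Rp}(\Rpn{-N}/\Rp)$, while you use the mirror injection $P\tensor_{\Rp}\Rpn{n}\hookrightarrow P$ with cokernel $P\tensor_{\Rp}(\Rp/\Rpn{n})$; in both cases the cokernel is a bounded complex of finitely generated projective $\Rd{0}$\nbd-modules because the relevant bimodule quotient is a finite direct sum of the~$\Rd{j}$, and null-homotopy of the injection exhibits the original complex as a homotopy retract of that cokernel. So the two arguments are essentially dual versions of the same idea.
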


\begin{proof}
  All three statements and the conclusion on finite domination are
  invariant under homotopy equivalence of complexes of
  $\Rp$\nbd-modules. As $C$ is $\Rp$\nbd-finitely dominated we may,
  and will, assume from the outset that $C$ is a bounded complex of
  finitely generated projective $\Rp$\nbd-modules.

  \smallbreak

  We use the notation
  \begin{gather*}
    \nu \colon C \tensor_{\Rp} \Rp \rTo C \tensor_{\Rp} \R \\
    \intertext{and} \nu_{k,n} \colon C \tensor_{\Rp} \Rpn {-k} \rTo C
    \tensor_{\Rp}
    \Rpn {-k-n} \\
  \end{gather*}
  for the obvious $\Rp$\nbd-linear maps sending $x \tensor r$
  to~$x \tensor r$.  

  \smallbreak

  (1)~$\Rightarrow$~(2): The map~$\nu$ is certainly null homotopic as
  its target is contractible by hypothesis, and a choice of null
  homotopy yields a map
  \begin{displaymath}
    H \colon \operatorname{cyl} (\id_{C}) \rTo C
    \tensor_{\Rp} \R = \bigcup_{n \geq 0} C \tensor_{\Rp}
    \Rpn {-n}
  \end{displaymath}
  restricting to~$\nu$ and~$0$, respectively, on the ``ends'' of the
  cylinder. Since the source is a bounded complex of finitely
  generated projective modules, the map~$H$ factors through a finite
  stage of the increasing union, resulting in a number $N \geq 0$ and
  a map
  \begin{equation}
    \label{eq:Hprime-null}
    H' \colon \operatorname{cyl} (\id_{C}) \rTo %
    C \tensor_{\Rp} \Rpn {-N} \ .
  \end{equation}
  By construction, $H'$ is a null homotopy of $\nu_{0,N}$, and hence
  $\nu_{0,n} \simeq 0$ for all $n \geq N$.

  \smallbreak

  (2)~$\Rightarrow$~(3): This is a tautology.

  \smallbreak

  (3)~$\Rightarrow$~(1): Let $n$ be sufficiently large so that
  $\nu_{0,n} \simeq 0$ as a map of $\Rd{0}$\nbd-module complexes. As
  $\nu_{k,n}$ is isomorphic to $\nu_{0,n} \tensor_{\Rd{0}} \Rd{-k}$,
  as a map of $\Rd{0}$\nbd-modules (\cite[Proposition~I.2.12]{P1}), we
  conclude that $\nu_{k,n} \simeq 0$ for all $k \geq 0$. In
  particular, the maps $\nu_{k,n}$ induce the trivial map on
  homology. Since
  $C \tensor_{\Rp} \R = \dirlim_{n \geq 0} C \tensor_{\Rp} \Rpn{-n}$,
  and since homology commutes with filtered colimits, we conclude by
  cofinality that
  \begin{displaymath}
    H_{*} (C \tensor_{\Rp} \R) = \dirlim_{n \geq 0}
  H_* (C \tensor_{\Rp} \Rpn{-n}) = \dirlim_{k \geq 0} H_*
  (C \tensor_{\Rp} \Rpn{-kn}) \ ,
  \end{displaymath}
  the last colimit taken with respect to the maps $H_{*}(\nu_{kn,n})$
  which are trivial. It follows that the colimit is trivial, whence
  $C \tensor_{\Rp} \R$ is acyclic.

  \medbreak

  Suppose now that the equivalent conditions of the Lemma are
  satisfied. Going back to~\eqref{eq:Hprime-null} and the notation
  used there, we know that the chain map
  \begin{displaymath}
    \nu_{0,N} \colon C \iso C \tensor_{\Rp} \Rp \rTo C \tensor_{\Rp}
    \Rpn {-N}
  \end{displaymath}
  is null homotopic. Hence its mapping cone is homotopy equivalent to
  the mapping cone
  $M = C \oplus \Sigma \big( C \tensor_{\Rp} \Rpn{-N} \big)$ of the
  zero map between the same complexes, which contains $C$ as a direct
  summand. On the other hand, the map $\nu_{0,N}$ is injective (since
  $C$ is assumed to consist of projective $\Rp$\nbd-modules), hence
  its mapping cone is quasi-isomorphic to the cokernel~$K$
  of~$\nu_{0,N}$. The $\ell$th chain module $K_{\ell}$ of~$K$ is
  isomorphic, as an $\Rp$\nbd-module, to
  $C_{\ell} \tensor_{\Rp} (\Rpn{-N} / \Rp)$. Since $C_{\ell}$ is a
  direct summand of 
  $\Rp^{n_{\ell}}$, for suitable $n_{\ell} \geq 0$, the module
  $K_{\ell}$ is a direct summand of
  \begin{multline*}
    \Rp^{n_{\ell}} \tensor_{\Rp} (\Rpn{-N} / \Rp) \iso %
    \Big( \Rp \tensor_{\Rp} (\Rpn{-N} / \Rp) \Big)^{n_{\ell}} \\%
    \iso \big( \Rpn{-N} / \Rp \big)^{n_{\ell}} %
    \iso \Big( \bigoplus_{q=-N}^{-1} \Rd{q} \Big)^{n_{\ell}} \ ,
  \end{multline*}
  the last isomorphism being $\Rd{0}$\nbd-linear. Since $\R$ is
  strongly graded each $\Rd{q}$ is a finitely generated projective
  $\Rd{0}$\nbd-module so that $K_{\ell}$ is a finitely generated
  projective $\Rd{0}$\nbd-module as well. Thus the (bounded) complex
  $K$ is $\Rd{0}$\nbd-finitely dominated. Since $C$ consists of
  projective $\Rd{0}$\nbd-modules so does~$M$, and as $M$ is
  quasi-isomorphic to~$K$ by the above arguments there is in fact an
  $\Rd{0}$\nbd-linear homotopy equivalence $M \simeq K$. It follows
  that $M$ is $\Rd{0}$\nbd-finitely dominated, hence so is its direct
  summand~$C$. --- The finiteness result also follows from Theorem~8.1
  of~\cite{more_on_fd}; in the notation used there the ring
  $\R_{*}(\!(t\inv)\!)$ contains~$\R$ so that
  $C \tensor_{\Rp} \R_{*}(\!(t\inv)\!) \iso C \tensor_{\Rp} \R
  \tensor_{\R} \R_{*}(\!(t\inv)\!)$ is acyclic as required.
\end{proof}

From Lemma~\ref{lem:middle-acyclic}, and from the fact that the
maps~\eqref{eq:alpha-n} and~\eqref{eq:null} are isomorphic as
$\Rd{0}$\nbd-linear maps, we conclude:

\begin{corollary}
  \label{cor:nilpotent_iff_acyclic}
  Let $(C, \alpha)$ be an object of~$\ch \twend$. Suppose that the
  associated $\Rp$\nbd-module complex $C^{\alpha}$ is
  $\Rp$\nbd-finitely dominated. The twisted endomorphism $\alpha$ is
  homotopy nilpotent if and only if $C^{\alpha} \tensor_{\Rp} \R$ is
  acyclic. \qed
\end{corollary}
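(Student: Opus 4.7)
The plan is to reduce the corollary to a direct application of Lemma~\ref{lem:middle-acyclic}, since most of the substantive content is already established there. The discussion preceding the corollary has essentially spelled out the reduction in words; the proof just needs to assemble the steps into an explicit chain of equivalences.

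First I would unpack the definition: $\alpha$ is homotopy nilpotent iff $\alpha^{(n)} \colon C \tensor_{\Rd{0}} \Rd{n} \rTo C \tensor_{\Rd{0}} \Rd{0}$ is null homotopic for $n \gg 0$. The commutative square~\eqref{diag:alpha} identifies $\alpha^{(n)}$, via $\Rd{0}$-linear isomorphisms, with the canonical map $C^{\alpha} \tensor_{\Rp} \Rpn{n} \rTo C^{\alpha} \tensor_{\Rp} \Rp$. Since null-homotopy is preserved under isomorphism, homotopy nilpotence of $\alpha$ is equivalent to this canonical map being null homotopic as a map of $\Rd{0}$-module complexes for $n \gg 0$.

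Next I would use the fact that $\Rpn{n}$ is an invertible $\Rp$-bimodule with inverse $\Rpn{-n}$: tensoring the canonical map $C^{\alpha} \tensor_{\Rp} \Rpn{n} \rTo C^{\alpha}$ with $\Rpn{-n}$ produces the map $\nu_{0,n} \colon C^{\alpha} \rTo C^{\alpha} \tensor_{\Rp} \Rpn{-n}$, and conversely tensoring $\nu_{0,n}$ with $\Rpn{n}$ recovers the original map (up to the natural isomorphisms $\Rpn{n} \tensor_{\Rp} \Rpn{-n} \iso \Rp \iso \Rpn{-n} \tensor_{\Rp} \Rpn{n}$). As $\Rpn{\pm n}$ are invertible bimodules, tensoring with them preserves and reflects null homotopies, so homotopy nilpotence of $\alpha$ is equivalent to condition~(3) of Lemma~\ref{lem:middle-acyclic} for the complex $C^{\alpha}$.

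Finally, since $C^{\alpha}$ is $\Rp$-finitely dominated by hypothesis, Lemma~\ref{lem:middle-acyclic} applies and yields the equivalence of condition~(3) with condition~(1), namely acyclicity of $C^{\alpha} \tensor_{\Rp} \R$. Combining the two equivalences gives the statement of the corollary. There is no real obstacle: the only small care needed is to verify cleanly that the two reformulations (via diagram~\eqref{diag:alpha} and via invertibility of $\Rpn{n}$) preserve null-homotopy in both directions, so that the chain of equivalences is genuinely bidirectional.
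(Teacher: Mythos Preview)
Your proposal is correct and follows essentially the same approach as the paper: the paper simply states that the corollary follows from Lemma~\ref{lem:middle-acyclic} together with the fact that the maps~\eqref{eq:alpha-n} and~\eqref{eq:null} are isomorphic as $\Rd{0}$-linear maps, which is precisely the reduction you spell out via diagram~\eqref{diag:alpha} and the invertibility of~$\Rpn{n}$.
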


For later use we record the following useful fact:

\begin{lemma}
  \label{lem:R-acyclyc-R0-fd}
  Let $(Z, \zeta)$ be an object of~$\ch \twend$. Suppose that the
  associated $\Rp$\nbd-module complex $Z^{\zeta}$ is an
  $\Rp$\nbd-finitely dominated complex of projective
  $\Rp$\nbd-modules. Suppose further that $Z^{\zeta} \tensor_{\Rp} \R$
  is acyclic. Then $Z$ is an $\Rd{0}$\nbd-finitely dominated bounded
  complex of projective $\Rd{0}$\nbd-modules.
\end{lemma}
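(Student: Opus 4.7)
The plan is to read off each of the three conclusions (boundedness, projectivity over $\Rd{0}$, and $\Rd{0}$-finite domination) from the hypotheses, the last of them by appeal to Lemma~\ref{lem:middle-acyclic}. The key observation to start with is that, by construction of the functor~$\Phi$, the underlying $\Rd{0}$-module complex of $Z^{\zeta}$ is exactly $Z$: the $\Rp$-module structure on $Z^{\zeta}$ restricts along the inclusion $\Rd{0} \subseteq \Rp$ to the original $\Rd{0}$-structure on~$Z$. In particular, boundedness of $Z^{\zeta}$ is the same as boundedness of~$Z$, which disposes of the first part of the conclusion.

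For projectivity of the chain modules $Z_{\ell}$ as $\Rd{0}$-modules I would invoke the strongly graded hypothesis on~$\R$. Each $\Rd{k}$ with $k \geq 0$ is a finitely generated projective right $\Rd{0}$-module, so $\Rp = \bigoplus_{k \geq 0} \Rd{k}$ is a projective (though in general not finitely generated) right $\Rd{0}$-module. Restriction of scalars along $\Rd{0} \subseteq \Rp$ therefore sends projective $\Rp$-modules to projective $\Rd{0}$-modules; applying this chain-level to the assumption that $Z^{\zeta}$ consists of projective $\Rp$-modules gives that $Z$ is a complex of projective $\Rd{0}$-modules.

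For the remaining assertion of $\Rd{0}$-finite domination I would feed $C := Z^{\zeta}$ into Lemma~\ref{lem:middle-acyclic}. The standing hypothesis of that lemma (that $C$ is an $\Rp$-finitely dominated complex of $\Rp$-modules) is part of our hypothesis on $Z^{\zeta}$, and its condition~(1) ($C \tensor_{\Rp} \R$ acyclic) is our second hypothesis. The final sentence of Lemma~\ref{lem:middle-acyclic} then delivers $\Rd{0}$-finite domination of $C = Z^{\zeta}$, which by the first paragraph is the same complex of $\Rd{0}$-modules as~$Z$. I do not anticipate a genuine obstacle here: the technical work has already been carried out in the proof of Lemma~\ref{lem:middle-acyclic}, and what remains is simply unpacking the definitions of $Z^{\zeta}$ and of restriction of scalars along $\Rd{0} \subseteq \Rp$.
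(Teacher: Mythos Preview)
Your proposal is correct and essentially matches the paper's proof: both read off projectivity over~$\Rd{0}$ from the strong grading hypothesis and then invoke Lemma~\ref{lem:middle-acyclic} to obtain $\Rd{0}$\nbd-finite domination. The only cosmetic difference is that the paper first replaces $Z^{\zeta}$ by a homotopy equivalent bounded complex~$C$ of finitely generated projective $\Rp$\nbd-modules before applying Lemma~\ref{lem:middle-acyclic}, whereas you apply it directly to~$Z^{\zeta}$; since that reduction is already built into the proof of Lemma~\ref{lem:middle-acyclic}, your shortcut is perfectly fine.
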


\begin{proof}
  Since $\R$ is \textit{strongly} $\bZ$\nbd-graded the complex~$Z$
  consists of projective $\Rd{0}$\nbd-modules. By hypothesis,
  $Z^\zeta$ is homotopy equivalent to a bounded complex~$C$ of
  finitely generated projective $\Rp$\nbd-modules. Since
  $Z^{\zeta} \tensor_{\Rp} \R$ is acyclic so is $C \tensor_{\Rp} \R$.
  This forces~$C$,
  hence~$Z^{\zeta}$, to be $\Rd{0}$\nbd-finitely dominated in view of
  Lemma~\ref{lem:middle-acyclic}.
\end{proof}

\subsection*{The nil category}

We are now in a position to define the nil category, the category of
homotopy nilpotent endomorphisms of chain complexes satisfying a
suitable finiteness constraint. In fact there are two variants,
corresponding to the two subrings $\Rp$ and~$\Rn$ of~$\R$.

\begin{definition}
  The \textit{positive nil category~$\cT{+}$ of~$\Rd{0}$ relative
    to~$\R$} is the full subcategory of $\ch (\twend)$ consisting of
  those chain complexes $(C, \alpha)$ such that
  \begin{enumerate}
  \item the underlying $\Rd{0}$\nbd-module complex~$C$ consists of
    projective $\Rd{0}$\nbd-modules, and is $\Rd{0}$\nbd-finitely
    dominated;
  \item the chain map
    $\alpha \colon C \tensor_{\Rd{0}} \Rd{1} \rTo C \tensor_{\Rd{0}}
    \Rd{0}$
    is homotopy nilpotent in the sense that $\alpha^{(n)} \simeq 0$
    for $n \gg 0$.
  \end{enumerate}
  The \textit{negative nil category~$\cT{-}$ of~$\Rd{0}$ relative
    to~$\R$} is defined analogously, using the category $\twendm$ of
  negative twisted endomorphisms in place of $\twend$.
\end{definition}

\begin{remark}
  \label{rem:nil-cat}
  Let $(C,\alpha)$ be an object of~$\cT{+}$. By
  Proposition~\ref{prop:R0-Rp-fd}, the $\Rp$\nbd-linear map
  $\torus(C,\alpha) \rTo C^{\alpha}$ is a quasi-isomorphism with
  $\Rp$\nbd-finitely dominated source. The same map is an
  $\Rd{0}$\nbd-linear homotopy
  equivalence.
  Thus the second condition in the definition of $\cT{+}$ is
  equivalent to the condition
  $\torus(C,\alpha) \tensor_{\Rp} \R \simeq 0$, by
  Corollary~\ref{cor:nilpotent_iff_acyclic}.
\end{remark}

A morphism in the category $\cT{+}$ is called a \textit{cofibration} if
it is injective and its cokernel consists of projective
$\Rd{0}$\nbd-modules. We say the morphism is a \textit{weak
  equivalence}, or a \textit{$q$\nbd-equivalence}, if it is a
quasi-isomorphism of $\Rd{0}$\nbd-module complexes.

\begin{lemma}
  These definitions equip $\cT{+}$ with the structure of a category with
  cofibrations and weak equivalences.
\end{lemma}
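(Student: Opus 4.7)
My plan is to verify each of \textsc{Waldhausen}'s axioms for a category with cofibrations and weak equivalences. The category $\cT{+}$ has a zero object, namely the zero chain complex equipped with the zero twisted endomorphism. The routine axioms --- that isomorphisms are both cofibrations and weak equivalences, that $0 \rTo (C,\alpha)$ is always a cofibration, and that cofibrations are stable under composition --- will follow immediately from the corresponding levelwise properties, since all defining conditions on the underlying $\Rd{0}$\nbd-complex (projectivity, $\Rd{0}$\nbd-finite domination, homotopy nilpotency) are preserved under these manipulations.

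The first substantive step is the pushout axiom. Given a cofibration $(A,\alpha) \rightarrowtail (B,\beta)$ and a morphism $(A,\alpha) \rTo (C,\gamma)$ in $\cT{+}$, I would form the levelwise pushout $P = B \cup_{A} C$ of the underlying $\Rd{0}$\nbd-complexes, equipped with the twisted endomorphism~$\pi$ induced from $\beta$ and~$\gamma$ (well\nbd-defined, since tensoring with the finitely generated projective $\Rd{0}$\nbd-bimodule~$\Rd{1}$ commutes with pushouts). Because the cokernel of $A \rightarrowtail B$ is a projective $\Rd{0}$\nbd-complex, the sequence $0 \rTo A \rTo B \rTo B/A \rTo 0$ is degreewise split, hence so is $0 \rTo C \rTo P \rTo B/A \rTo 0$; this guarantees both that $P$ consists of projective $\Rd{0}$\nbd-modules and that $(C,\gamma) \rTo (P,\pi)$ is a cofibration, and Lemma~\ref{lem:K-fd-2-of-3} applied to this sequence yields $\Rd{0}$\nbd-finite domination of~$P$.

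The main obstacle will be showing that the induced twisted endomorphism~$\pi$ is homotopy nilpotent, since this is the one property of $\cT{+}$ not immediately visible at the level of the underlying complex. Here I would invoke Remark~\ref{rem:nil-cat} to restate the nilpotency of~$\pi$ as acyclicity of $\torus(P,\pi) \tensor_{\Rp} \R$, and then exploit the exactness of the mapping half\nbd-torus functor (afforded by projectivity of $\Rpn{1}$ and $\Rp$ as right $\Rd{0}$\nbd-modules). The degreewise split pushout square yields a short exact sequence
\begin{displaymath}
  0 \rTo \torus(A,\alpha) \rTo \torus(B,\beta) \oplus \torus(C,\gamma)
  \rTo \torus(P,\pi) \rTo 0
\end{displaymath}
of $\Rp$\nbd-module complexes which remains short exact after $\tensor_{\Rp} \R$; the associated long exact homology sequence, combined with acyclicity of the first two terms (by hypothesis $(A,\alpha), (B,\beta), (C,\gamma) \in \cT{+}$), then forces acyclicity of $\torus(P,\pi) \tensor_{\Rp} \R$, as desired.

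Finally, for the glueing axiom I would take a commutative diagram in $\cT{+}$ with cofibrations in one column and weak equivalences in the other, and compare the resulting pushouts $P$ and~$P'$ via the induced map of degreewise split short exact sequences $0 \rTo C \rTo P \rTo B/A \rTo 0$ and $0 \rTo C' \rTo P' \rTo B'/A' \rTo 0$. The map $B/A \rTo B'/A'$ is a quasi\nbd-isomorphism by the five lemma applied to the quasi\nbd-isomorphisms $A \rTo A'$ and $B \rTo B'$, and a further application of the five lemma to the long exact sequences above then yields that $P \rTo P'$ is a quasi\nbd-isomorphism of $\Rd{0}$\nbd-complexes, completing the verification.
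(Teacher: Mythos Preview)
Your proof is correct and follows essentially the same approach as the paper's. Both arguments reduce the pushout axiom to (i) verifying condition~(1) of~$\cT{+}$ for~$P$ via Lemma~\ref{lem:K-fd-2-of-3}, and (ii) verifying homotopy nilpotency of~$\pi$ by applying the exact half\nbd-torus functor, tensoring with~$\R$, and invoking Remark~\ref{rem:nil-cat} (ultimately Corollary~\ref{cor:nilpotent_iff_acyclic}). The only organisational difference is that the paper works throughout with the two cokernel sequences $0 \rTo A \rTo B \rTo K \rTo 0$ and $0 \rTo C \rTo P \rTo K \rTo 0$, whereas for step~(ii) you use the single \textsc{Mayer}--\textsc{Vietoris} sequence $0 \rTo \torus(A,\alpha) \rTo \torus(B,\beta) \oplus \torus(C,\gamma) \rTo \torus(P,\pi) \rTo 0$; this is a harmless repackaging of the same information. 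Your explicit five\nbd-lemma argument for the gluing axiom is also fine --- the paper simply defers this to the ambient category of projective $\Rd{0}$\nbd-complexes.
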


\begin{proof}
  This is mostly straightforward; the gluing lemma holds, for example,
  since it holds in the category of bounded complexes of projective
  $\Rd{0}$\nbd-modules. What needs explicit verification is that the
  requisite pushouts exist within the category~$\cT{+}$.  Using the
  functor~$\Phi$ and its inverse $(A,\alpha) \mapsto A^{\alpha}$ we
  identify $\cT{+}$ with a full subcategory of the category~$\cK$ of
  bounded chain complexes of $\Rp$\nbd-modules. Let $(A,\alpha)$,
  $(B,\beta)$ and $(C,\gamma)$ be objects in~$\cT{+}$, let
  $f \colon (A,\alpha) \rTo (B,\beta)$ be a cofibration, and let
  $g \colon (A,\alpha) \rTo (C,\gamma)$ be an arbitrary morphism
  in~$\cT{+}$. We can then form the pushout diagram
  \begin{diagram}
    A^{\alpha} & \rTo^{f} & B^{\beta} \\
    \dTo<{g} & & \dTo \\
    C^{\gamma} & \rTo^{f'} & P^{\pi}
  \end{diagram}
  in the category~$\cK$, and claim that $(P, \pi)$ is on object
  of~$\ch(\twend)$.

  As $f$ is a cofibration in~$\cT{+}$ it is an injective map, and the
  same is thus true for its pushout~$f'$. Let $K^{\kappa}$ denote the
  cokernel of~$f$, associated with $(K, \kappa) \in \ch(\twend)$. From
  general properties of pushout squares we know that
  $\coker(f') \iso K^{\kappa}$. Hence we obtain a commutative ladder
  diagram of short exact sequences of chain complexes:
  \begin{diagram}[flow]
    0 & \rTo & A^{\alpha} & \rTo & B^{\beta} & \rTo & K^{\kappa} &
    \rTo & 0 \\
    && \dTo && \dTo && \dTo>{\iso} \\
    0 & \rTo & C^{\gamma} & \rTo & P^{\pi} & \rTo & K^{\kappa} &
    \rTo & 0
  \end{diagram}
  Again, since $f$ is a cofibration its cokernel $K^{\kappa}$ consists
  of projective $\Rd{0}$\nbd-modules. Thus both sequences 
  are levelwise split short exact sequences of $\Rd{0}$\nbd-module
  complexes, and $P$ consists of projective $\Rd{0}$\nbd-modules. By
  Lemma~\ref{lem:K-fd-2-of-3}, applied to the top row, the
  complex~$K^\kappa$ is $\Rd{0}$\nbd-finitely dominated, which
  implies, by applying Lemma~\ref{lem:K-fd-2-of-3} to the bottom row,
  that so is~$P^{\pi}$.

  Application of the exact half-torus function yields another
  commutative ladder diagram of short exact sequences,
  \begin{diagram}[flow]
    0 & \rTo & \torus(A,\alpha) & \rTo & \torus(B,\beta) & \rTo &
    \torus(K,\kappa) & \rTo & 0 \\
    && \dTo && \dTo && \dTo>{\iso} \\
    0 & \rTo & \torus(C,\gamma) & \rTo & \torus(P,\pi) & \rTo &
    \torus(K,\kappa) & \rTo & 0
  \end{diagram}
  consisting of bounded complexes of finitely generated projective
  $\Rp$\nbd-modules; in particular, both short exact sequences are
  levelwise split. By Remark~\ref{rem:nil-cat} the chain complexes
  $\torus(A,\alpha)$, $\torus(B,\beta)$ and~$\torus(C,\gamma)$ are
  $\Rp$\nbd-finitely dominated. Hence $\torus(K,\kappa)$ and
  $\torus(P,\pi)$ are $\Rp$\nbd-finitely dominated as well, by two
  applications of Lemma~\ref{lem:K-fd-2-of-3}.

  The bottom row yields a short exact sequence
  \begin{displaymath}
    0 \rTo \torus(C,\gamma) \tensor_{\Rp} \R \rTo \torus(P,\pi)
    \tensor_{\Rp} \R \rTo \torus(K,\kappa) \tensor_{\Rp} \R \rTo 0 \ .
  \end{displaymath}
  The first and third entry are acyclic, by
  Corollary~\ref{cor:nilpotent_iff_acyclic}, hence so is the middle
  entry. Applying the Corollary again leads us to conclude that
  $(P,\pi)$ is an object of~$\cT{+}$.
\end{proof}

With this lemma in place, we can now introduce the notation
\begin{subequations}
  \begin{align}
    \nil+q(\Rd{0}) & = K_{q} \big( \cT{+} \big) = %
                     \pi_{q} \Omega |q \sdot \cT{+}| %
    \label{eq:notation_nil+} \\ %
    \intertext{and}
    \nil-q(\Rd{0}) & = K_{q} \big( \cT{-} \big) = %
                     \pi_{q} \Omega |q \sdot \cT{-}| \ . %
    \label{eq:notation_nil-} %
  \end{align}
\end{subequations}

\newcommand{\fdr}{\mathbf{FD}}

For $L$ a unital ring write $\fdr(L)$ for the category of
$L$\nbd-finitely dominated bounded complexes of projective
$L$\nbd-modules. It is well-known, and can be verified without
difficulty using \textsc{Waldhausen}'s approximation theorem, that the
inclusion $\chpr L \rTo^{\subseteq} \fdr(L)$ induces a homotopy
equivalence on $K$\nbd-theory spaces
\begin{equation}
  \label{eq:FD}
  K(L) = \Omega |q\sdot \chpr L| \rTo^{\simeq} \Omega |q \sdot
  \fdr(L)| \ ,
\end{equation}
where ``$q$'' stands for quasi-isomorphisms as usual, and the
cofibrations are the injective maps with levelwise projective
cokernel. Hence the forgetful functors
\begin{displaymath}
  o^{\mp} \colon \cT{\mp} \rTo \fdr(\Rd{0}) \ , \quad (Z, \zeta)
  \mapsto Z
\end{displaymath}
yield group homomorphisms
\begin{equation}
  \label{eq:maps-o}
  o^{-}_{*} \colon \nil-q(\Rd{0}) \rTo K_{q} (\Rd{0}) %
  \qquad \text{and} \qquad %
  o^{+}_{*} \colon \nil+q(\Rd{0}) \rTo K_{q} (\Rd{0}) \ ,
\end{equation}
and we will establish in Theorem~\ref{thm:NK_is_nil} that there are
isomorphisms
\begin{displaymath}
  \NK-{q+1}(\Rd{0}) \iso \ker(o^{+}) %
  \qquad \text{and} \qquad %
  \NK+{q+1}(\Rd{0}) \iso \ker(o^{-}) \ , %
\end{displaymath}
identifying the groups $\NK{\mp}{q+1} (\Rd{0})$ with the $q$th reduced
algebraic $K$\nbd-group of the category~$\cT{\pm}$.

\section{The fundamental square} 

\subsection{The fundamental square of the projective line}

Let $f = (f^{-}, f^{0}, f^{+})$ be a morphism $\cY \rTo \cZ$ in the
category~$\cC$ of bounded chain complexes of vector bundles on the
projective line associated with a strongly $\bZ$-graded ring~$\R$. We
say that $f$ is an \textit{$h_{?}$\nbd-equivalence}, for the
decoration $? \in \{-,\, 0,\, +\}$, if the component~$f^{?}$ is a
quasi-isomorphism of chain complexes. Together with the previous
notion of cofibrations, \viz, levelwise split injections in each
component, this equips $\cC$ with three new structures of a category
with cofibrations and weak equivalences. Note that an
$h^{-}$\nbd-equivalence is automatically an $h^{0}$\nbd-equivalence as
well since in the commutative square
\begin{diagram}
  Y^{-} \tensor_{\Rn} \R & \rTo^{\upsilon^{-}_{\sharp}} & Y^{0} \\
  \dTo<{f^{-} \tensor \R} && \dTo<{f^{0}} \\
  Z^{-} \tensor_{\Rn} \R & \rTo^{\zeta^{-}_{\sharp}} & Z^{0}
\end{diagram}
the horizontal maps are isomorphisms (sheaf condition), and the left
vertical map is a quasi-isomorphism because $\R$ is a flat left
$\Rn$\nbd-module thanks to the strong grading. --- Similarly, every
$h^{+}$\nbd-equivalence is an $h^{0}$\nbd-equivalence.

Thus the identity functor yields a commutative square of
$K$\nbd-theory spaces
\begin{equation}
  \begin{diagram}
    h \mathcal{S}_{\bullet} \cC & \rTo & h_{+} \mathcal{S}_{\bullet} \cC \\
    \dTo && \dTo \\
    h_{-} \mathcal{S}_{\bullet} \cC & \rTo & h_{0}
    \mathcal{S}_{\bullet} \cC
  \end{diagram}
  \label{eq:fund}
\end{equation}
which we call the \textit{fundamental square} of the projective line.
Associated with it, by taking vertical homotopy fibres, is the map
\begin{equation}
  \label{eq:map_alpha}
  \alpha \colon h \mathcal{S}_{\bullet} \cC^{h_{-}} %
  \rTo h_{+} \mathcal{S}_{\bullet} \cC^{h_{0}}
\end{equation}
where the notation is as in \textsc{Waldhausen}'s fibration theorem
\cite[Theorem~1.6.4]{MR802796}. Explicitly, $\cC^{h_{-}}$ is the full
subcategory of~$\cC$ consisting of those objects $\cY$ for which
$\cY \rTo 0$ is an $h_{-}$\nbd-equivalence, that is, which satisfy
$Y^{-} \simeq *$ and $Y^{0} \simeq *$, and $\cC^{h_{0}}$ is the full
subcategory of~$\cC$ consisting of those objects $\cY$ for which
$\cY \rTo 0$ is an $h_{0}$\nbd-equivalence, that is, which satisfy
$Y^{0} \simeq *$.

\subsection*{The fibres of the fundamental square}

The (homotopy) fibres of the maps in the fundamental square can be
identified explicitly: they are homotopy equivalent to $K$\nbd-theory
spaces of categories of homotopy nilpotent twisted endomorphisms. We
will use this identification presently to conclude that the
fundamental square is homotopy cartesian.

\begin{theorem}[Fibres of the fundamental square]
  \label{thm:fibre-terms-are-nil}
  \addtocounter{equation}{-1}
  \begin{subequations}
    The functor
    \begin{displaymath}
      F \colon \ch \vb^{h_{0}} \rTo \cT{+} \ , \quad \cY = \big( Y^{-} \rTo
      Y^{0} \lTo Y^{+} \big) \ \mapsto \ \Phi (Y^{+}) \ ,
    \end{displaymath}
    defined on the category of bounded chain complexes of vector
    bundles~$\cY$ with $Y^{0} \simeq *$, induces a homotopy equivalence
    \begin{align}
      h_{+} \sdot \ch \vb^{h_{0}} \rTo^{\sim} q \sdot \cT{+}  %
      \ , \label{map:h+} \\ %
      \intertext{and, by restriction, a homotopy equivalence} %
      h \sdot \ch \vb^{h_{-}} \rTo^{\sim} q \sdot \cT{+} %
      \label{map:h}
    \end{align}
    where the letter ``$q$'' stands for weak equivalences in the
    category~$\cT{+}$.
  \end{subequations}
\end{theorem}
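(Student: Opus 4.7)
My plan is to apply \textsc{Waldhausen}'s approximation theorem to the exact functor~$F$, treating both asserted homotopy equivalences in one stroke. First I would verify that $F$ is well\nbd-defined: for $\cY \in \ch \vb^{h_{0}}$ the sheaf condition gives $Y^{+} \tensor_{\Rp} \R \iso Y^{0} \simeq 0$, whence by Lemma~\ref{lem:R-acyclyc-R0-fd} the complex~$Y^{+}$ is $\Rd{0}$\nbd-finitely dominated (and is a complex of projective $\Rd{0}$\nbd-modules because $\Rp$ is projective over~$\Rd{0}$), and by Corollary~\ref{cor:nilpotent_iff_acyclic} the associated twisted endomorphism is homotopy nilpotent; thus $F(\cY) = \Phi(Y^{+})$ lies in~$\cT{+}$. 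Exactness of~$F$ is routine. For the second assertion I would observe that on the subcategory $\ch \vb^{h_{-}}$ the $h$\nbd-equivalences coincide with the $h_{+}$\nbd-equivalences --- the condition on~$f^{-}$ is automatic between acyclic complexes, and the condition on~$f^{0}$ then follows from that on~$f^{+}$ via the sheaf condition and flatness of~$\R$ over~$\Rp$ --- so both statements reduce to the same approximation argument.

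The first approximation axiom is immediate: a morphism~$f$ in $\ch \vb^{h_{0}}$ is an $h_{+}$\nbd-equivalence precisely when $f^{+}$ is a quasi\nbd-isomorphism, which is equivalent to $F(f) = \Phi(f^{+})$ being a $q$\nbd-equivalence in~$\cT{+}$.

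For the second approximation axiom, given $\cY \in \ch \vb^{h_{0}}$ and a morphism $g \colon F(\cY) \rTo (C, \alpha)$ in~$\cT{+}$ --- corresponding to an $\Rp$\nbd-linear chain map $Y^{+} \rTo C^{\alpha}$ --- I would construct the required factorisation as follows. Using the quasi\nbd-isomorphism $\torus(C,\alpha) \rTo C^{\alpha}$ of Proposition~\ref{prop:R0-Rp-fd} together with a cylinder and cofibrant\nbd-replacement argument, produce a cofibration $Y^{+} \rTo Z^{+}$ of bounded complexes of finitely generated \textit{free} $\Rp$\nbd-modules (freeness ensured by adding $D(k, \Rp^{n_{k}})$ summands) together with a quasi\nbd-isomorphism $Z^{+} \rTo C^{\alpha}$ factoring the given map. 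Setting $Z^{0} = Z^{+} \tensor_{\Rp} \R$ yields an acyclic complex of finitely generated free $\R$\nbd-modules by Corollary~\ref{cor:nilpotent_iff_acyclic}. Since each chain module of~$Z^{0}$ is free, and therefore induced from~$\Rn$ \textit{via} the isomorphism $\Rn \tensor_{\Rn} \R \iso \R$, Theorem~\ref{thm:lifting_acyclic_complexes} yields a stabilisation $(Z^{0})'$ of~$Z^{0}$ together with an acyclic bounded complex~$Z^{-}$ of finitely generated projective $\Rn$\nbd-modules and an isomorphism $Z^{-} \tensor_{\Rn} \R \iso (Z^{0})'$. After stabilising~$Z^{+}$ correspondingly so that $Z^{+} \tensor_{\Rp} \R = (Z^{0})'$, the diagram $\cZ = \big(Z^{-} \rTo (Z^{0})' \lTo Z^{+}\big)$ is an object of $\ch \vb^{h_{-}} \subseteq \ch \vb^{h_{0}}$, and the quasi\nbd-isomorphism $F(\cZ) = \Phi(Z^{+}) \rTo C^{\alpha}$ persists since the added summands of~$Z^{+}$ are contractible.

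The main obstacle is arranging $\cY \rTo \cZ$ to be a cofibration in all three components compatibly. The $+$\nbd-component inclusion is built directly, and the $0$\nbd-component is obtained by base change from $Y^{0} \iso Y^{+} \tensor_{\Rp} \R \rTo Z^{+} \tensor_{\Rp} \R = (Z^{0})'$; however, to secure a compatible $Y^{-} \rTo Z^{-}$ one must invoke a relative version of Theorem~\ref{thm:lifting_acyclic_complexes} applied to the cokernel of $Y^{0} \rTo (Z^{0})'$, ensuring the lifted~$Z^{-}$ accommodates the original~$Y^{-}$. A cleaner alternative is to replace~$\cZ$ by $\cY \oplus \cZ$: the summand inclusion is automatically a cofibration, and the composite $F(\cY \oplus \cZ) = F(\cY) \oplus F(\cZ) \rTo (C,\alpha)$ --- using~$g$ on the first summand and the quasi\nbd-isomorphism $F(\cZ) \rTo (C,\alpha)$ on the second --- remains a quasi\nbd-isomorphism, thereby reducing the approximation property to the essential\nbd-surjectivity content of the previous paragraph.
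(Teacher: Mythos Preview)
Your overall strategy --- \textsc{Waldhausen}'s approximation theorem, with the two cases handled simultaneously --- matches the paper, and your treatment of well-definedness and of the first approximation axiom is correct and essentially identical to the paper's.

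The genuine gap is in your verification of the second approximation axiom, specifically in producing a morphism $\cY \rTo \cZ$ of vector bundles (not merely the $+$\nbd-component). Your ``cleaner alternative''~(b) does not work: the map $(g,h) \colon F(\cY) \oplus F(\cZ) \rTo (C,\alpha)$, given by $g$ on the first summand and a quasi\nbd-isomorphism~$h$ on the second, is \emph{not} a quasi\nbd-isomorphism unless $F(\cY) = \Phi(Y^{+})$ is acyclic. On homology the map is $(H_{*}g,\, H_{*}h) \colon H_{*}Y^{+} \oplus H_{*}Z^{+} \rTo H_{*}C^{\alpha}$; since $H_{*}h$ is an isomorphism this map is surjective with kernel isomorphic to~$H_{*}Y^{+}$, which need not vanish. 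Your alternative~(a), a ``relative version'' of Theorem~\ref{thm:lifting_acyclic_complexes}, is not proved in the paper and would require nontrivial additional work.

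The paper resolves this differently, and the idea is worth knowing. First, the map in the approximation axiom need not be a cofibration (it can be replaced by one via the cylinder functor), so your concern in the last paragraph is misplaced. Having constructed $\cX = (X^{-} \rTo X^{0} \lTo X^{+})$ with $X^{-}$ acyclic and a map $a^{+} \colon Y^{+} \rTo X^{+}$, the paper obtains $a^{0} \colon Y^{0} \rTo X^{0}$ from~$a^{+}$ via the sheaf isomorphisms, and then observes that since $Y^{-}$ is a bounded complex of \emph{finitely generated} $\Rn$\nbd-modules and $X^{0} \iso X^{-} \tensor_{\Rn} \R = \bigcup_{q \geq 0} X^{-} \tensor_{\Rn} \Rnn{q}$, the composite $Y^{-} \rTo Y^{0} \rTo X^{0}$ factors through $X^{-} \tensor_{\Rn} \Rnn{q}$ for some~$q \gg 0$. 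This yields a map $a = (a^{-}, a^{0}, a^{+}) \colon \cY \rTo \cX(q,0)$ to the \emph{twist} of~$\cX$, which lies in $\ch\vb^{h_{-}}$ since $X^{-}$ and~$X^{0}$ are contractible. The twist is the missing ingredient.
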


By symmetry, there are analogous homotopy equivalences
\begin{subequations}
  \begin{align}
    h_{-} \sdot \ch \vb^{h_{0}} \rTo^{\sim} q \sdot \cT{-}  %
    \ , \label{map:h-} \\ %
    \intertext{and, by restriction,} %
    h \sdot \ch \vb^{h_{+}} \rTo^{\sim} q \sdot \cT{-} \ .%
    \label{map:h_alt}
  \end{align}
\end{subequations}

\begin{proof}
  The functor $F$ is well defined. Indeed, the component $Y^{+}$
  of~$\cY$ is a bounded complex of finitely generated projective
  $\Rp$\nbd-modules, hence $Y^{+}$ consists of projective
  $\Rd{0}$\nbd-modules. The hypothesis
  $Y^{+} \tensor_{\Rp} \R \iso Y^{0} \simeq *$ implies that $Y^{+}$ is
  $\Rd{0}$\nbd-finitely dominated by
  Lemma~\ref{lem:R-acyclyc-R0-fd}. Moreover, the associated twisted
  endomorphism
  \[Y^{+} \tensor_{\Rd{0}} \Rd{1} \rTo Y^{+} \tensor_{\Rd{0}} \Rd{0}\]
  is homotopy nilpotent by Corollary~\ref{cor:nilpotent_iff_acyclic}.

  \smallbreak

  We will employ \textsc{Waldhausen}'s approximation theorem
  \cite[Theorem~1.6.7]{MR802796}, combined with the standard
  observation that the map required by property (App~2) does not have
  to be a cofibration (since it can be replaced by one in the presence
  of cylinder functors). --- We can in fact treat cases~\eqref{map:h+}
  and~\eqref{map:h} at the same time as the proofs are almost
  identical.

  \smallbreak To start with, a morphism $a=(a^{-}, a^{0}, a^{+})$ in
  $\ch \vb^{h_{0}}$ is an $h_{+}$\nbd-equi\-va\-lence, by definition, if
  and only if $F(a)=a^{+}$ is a quasi-isomorphism. A morphism
  $b=(b^{-}, b^{0}, b^{+})$ in $\ch \vb^{h_{-}}$ is an
  $h$\nbd-equivalence if and only if $F(b)=b^{+}$ is a
  quasi-isomorphism since $b^{-}$ and~$b^{0}$ are maps between acyclic
  complexes, hence are quasi-isomorphisms in any case. In other words,
  the functor~$F$ satisfies property~(App~1) in both cases under
  consideration.

  \smallbreak

  Let $(Z, \zeta) \in \cT{+}$ and $Y^{+} \in \ch \vb^{h_{0}}$, and let
  $f \colon \Phi(Y^{+}) \rTo (Z, \zeta)$ be a morphism in~$\cT{+}$. We
  can equivalently consider $f$ as an $\Rp$\nbd-linear map of chain
  complexes $Y^{+} \rTo Z^{\zeta}$. By Remark~\ref{rem:nil-cat}, the
  chain complex $Z^{\zeta}$ is quasi-isomorphic to the
  $\Rp$\nbd-finitely dominated complex~$\torus(Z,\zeta)$ so that there
  exists a bounded complex~$E$ of finitely generated projective
  $\Rp$\nbd-modules and a quasi-isomorphism
  $e \colon E \rTo Z^{\zeta}$. We can lift~$f$ up to homotopy to an
  $\Rp$\nbd-linear map $g \colon Y^{+} \rTo E$ so that $eg$ is
  homotopic to~$f$. A choice of homotopy $f \simeq eg$ determines a
  map $h' \colon \cyl(g) \rTo Z^{\zeta}$ such that the composite
  $Y^{+} \rTo_{i_{0}} \cyl(g) \rTo_{h'} Z^{\zeta}$ coincides with~$f$,
  and such that the composite
  $E \rTo^{\simeq}_{i_{1}} \cyl(g) \rTo_{h'} Z^{\zeta}$ coincides
  with~$e$. In particular, $h'$~is a quasi-isomorphism of
  $\Rp$\nbd-module complexes. (Here $i_{0}$ and~$i_{1}$ are the front
  and back inclusion into the mapping cylinder.)

  We can now form a complex~$X^{+}$ by attaching to~$\cyl(g)$ a direct
  sum of contractible complexes of the type $D(\ell,M)$, with $M$ a
  finitely generated projective $\Rp$\nbd-module, so that all chain
  modules of~$X^{+}$ except possibly one are finitely generated
  \textit{free} modules. We let $a^{+}$ denote the composition
  of~$i_{0}$ with the inclusion $\cyl(g) \rTo X^{+}$, and let $h^{+}$
  denote the composition of the projection $X^{+} \rTo \cyl(g)$
  with~$h'$. We have constructed a commutative diagram
  \begin{diagram}
    Y^{+} & \rTo^{a^{+}} & X^{+} \\
    & \rdTo<{f} & \dTo<{\simeq}>{h^{+}} \\
    && Z^{\zeta}
  \end{diagram}
  where $X^{+}$ is a bounded complex of finitely generated projective
  $\Rp$\nbd-modules, and $h^{+}$ is a quasi-isomorphism.

  \smallbreak

  We will now extend $X^{+}$ to a vector bundle
  $\cX = (X^{-}, X^{0}, X^{+})$ and $a^{+}$ to a map of vector bundles
  $a = (a^{-}, a^{0}, a^{+}) \colon \cY \rTo \cX$. This may involve
  enlarging $X^{+}$ further, by taking the direct sum with
  contractible complexes of the form $D(k,\Rp^{k})$ and composing
  $a^{+}$ and $h^{+}$ with the obvious inclusion and projection
  maps. We shall keep the notation $X^{+}$ and $h^{+}$ even for the
  modified data.

  \smallbreak

  Let $X^{0} = X^{+} \tensor_{\Rp} \R$. Recall that $Z^{\zeta}$ is
  quasi-isomorphic to~$X^{+}$, \textit{via} the map~$h^{+}$, and to
  the half-torus~$\torus(Z,\zeta)$, by Remark~\ref{rem:nil-cat}. Both
  $X^{+}$ and~$\torus(Z,\zeta)$ are bounded complexes of finitely
  generated projective $\Rp$\nbd-modules, so these complexes are
  homotopy equivalent. It follows that
  $X^{0} \simeq \torus(Z,\zeta) \tensor_{\Rp} \R$ is contractible, by
  Remark~\ref{rem:nil-cat} again; this implies that
  $[X^{0}] = \sum_{k} (-1)^{k} [X^{0}_{k}] = 0 \in K_{0} (\R)$. As all
  chain modules of $X^{0}$ are free with the possible exception of a
  single module, we conclude that $X^{0}$ consists of stably free
  modules, and hence consists of modules which are stably induced
  from~$\Rn$. We now appeal to
  Theorem~\ref{thm:lifting_acyclic_complexes}: we can modify $X^{+}$
  by taking direct sum with finitely many contractible complexes of
  the form $D(k, \Rp^{j})$, thereby modifying $X^{0}$ in an analogous
  manner, so that $X^{0}$ is isomorphic to $X^{-} \tensor_{\Rn} \R$
  for a bounded acyclic complex~$X^{-}$ of finitely generated
  projective $\Rn$\nbd-modules.

  \smallbreak

  We have thus constructed a vector bundle
  \begin{displaymath}
    \cX = (X^{-} \rTo X^{0} \lTo X^{+})
  \end{displaymath}
  together with a quasi-isomorphism
  $h^{+} \colon X^{+} \rTo Z^{\zeta}$ and a map
  $a^{+} \colon Y^{+} \rTo X^{+}$ such that $h^{+} \circ a^{+} = f$.
  The map $a^{+}$ induces a compatible map
  $a^{0} = \xi^{+}_{\sharp} \circ ( a^{+} \tensor \id) \circ \big(
  \xi^{+}_{\sharp} \big)\inv$,
  see the diagram in Fig.~\ref{fig:diag-corners}.
  \begin{figure}[ht]
    \centering
    \begin{diagram}
      Y^{-} & \rTo^{\upsilon^{-}} & Y^{0} &
      \lTo_{\iso}^{\xi^{+}_{\sharp}} & Y^{+} \tensor_{\Rp} \R \\
      && \dDashto<{a^{0}} && \dTo>{a^{+} \tensor \id} \\
      X^{-} & \rTo^{\xi^{-}} & X^{0} & \lTo_{=}^{\xi^{+}_{\sharp}} &
      X^{+} \tensor_{\Rp} \R
    \end{diagram}
    \caption{Diagram used in proof of
      Theorem~\ref{thm:fibre-terms-are-nil}}
    \label{fig:diag-corners}
  \end{figure}
  For each chain level~$\ell$ the composite map
  \begin{displaymath}
    Y^{-}_{\ell} \rTo^{\upsilon^{-}} Y^{0}_{\ell} \rTo^{a^{0}} X^{0}_{\ell}
    \iso X^{-}_{\ell} \tensor_{\Rn} \R = \bigcup_{q \geq 0}
    X^{-}_{\ell} \tensor_{\Rn} \Rnn{q}
  \end{displaymath}
  factors through some term $X^{-}_{\ell} \tensor_{\Rn} \Rnn{q}$ so
  that, by choosing $q \gg 0$, the map $a^{0} \circ \upsilon^{-}$
  factorises as
  \begin{displaymath}
    Y^{-} \rTo^{a^{-}} X^{-} \tensor_{\Rn} \Rnn{q} \rTo X^{0} \ ,
  \end{displaymath}
  the second map in this composition given by
  $x \tensor r \mapsto \xi^{-}(x) \cdot r$. That is, there exists
  $q \geq 0$ and
  $a^{-} \colon Y^{-} \rTo X^{-} \tensor_{\Rn} \Rnn {q}$ such that
  $a = (a^{-}, a^{0}, a^{+}) \colon \cY \rTo \cX(q,0)$ is a map of
  vector bundles, and such that $h^{+} \circ F(a) = f$.  Since $X^{-}$
  and $X^{0}$ are contractible by construction, $\cX(q,0)$ is an
  object of $\ch \vb^{h_{-}} \subseteq \ch \vb^{h_{0}}$.

  This proves that $F$ satisfies property~(App~2) in addition
  to~(App~1), and the approximation theorem applies.
\end{proof}

\begin{corollary}
  \label{cor:fund_square_cartesian}
  The map~$\alpha$ of~\eqref{eq:map_alpha} is a homotopy
  equivalence. Thus, the fundamental square~\eqref{eq:fund} is
  homotopy cartesian.
\end{corollary}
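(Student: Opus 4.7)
The plan is to leverage Theorem~\ref{thm:fibre-terms-are-nil}: both vertical homotopy fibres of the fundamental square~\eqref{eq:fund} will be identified with $q\sdot \cT{+}$ via homotopy equivalences that intertwine with~$\alpha$, so that $\alpha$ becomes a homotopy equivalence for formal reasons. Concretely, I would invoke \textsc{Waldhausen}'s fibration theorem \cite[Theorem~1.6.4]{MR802796} once for the pair $h \subseteq h_{-}$ of weak equivalences on~$\cC$ and once for $h_{+} \subseteq h_{0}$; the saturation, extension and cylinder axioms hold for all four structures as noted before~\eqref{eq:fund}. This identifies the homotopy fibre of the left vertical map $h\sdot\cC \to h_{-}\sdot\cC$ with $h\sdot\cC^{h_{-}}$, that of the right vertical map $h_{+}\sdot\cC \to h_{0}\sdot\cC$ with $h_{+}\sdot\cC^{h_{0}}$, and shows that $\alpha$ itself is, by its very definition in~\eqref{eq:map_alpha}, the map induced by the inclusion of categories with cofibrations and weak equivalences $\cC^{h_{-}} \hookrightarrow \cC^{h_{0}}$.

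Now Theorem~\ref{thm:fibre-terms-are-nil} supplies the two homotopy equivalences~\eqref{map:h+} and~\eqref{map:h}, both given by the same assignment $F \colon \cY \mapsto \Phi(Y^{+})$; in particular, the map $h\sdot\cC^{h_{-}} \to q\sdot \cT{+}$ is literally the restriction to the smaller subcategory of $h_{+}\sdot\cC^{h_{0}} \to q\sdot \cT{+}$. Hence the triangle with $\alpha$ along the top and the two copies of~$F$ as the other two edges commutes on the nose, and the 2-out-of-3 property for homotopy equivalences forces $\alpha$ to be a homotopy equivalence. The second assertion of the corollary then follows at once from the general principle that a commutative square of pointed spaces is homotopy cartesian precisely when the induced map on vertical homotopy fibres is a homotopy equivalence.

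The main obstacle I anticipate is verifying that the fibration theorem applied to the commutative square~\eqref{eq:fund} really does turn $\alpha$ into the inclusion-induced functor described above, rather than something more complicated --- that is, checking the functorial naturality of the identification of homotopy fibres under a simultaneous change of weak equivalence structures. This is however built into the theorem and amounts to little more than bookkeeping; beyond it the argument is purely formal, the substantive content having already been absorbed into Theorem~\ref{thm:fibre-terms-are-nil}.
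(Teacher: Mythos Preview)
Your proposal is correct and follows essentially the same route as the paper: form the triangle with $\alpha$ and the two instances of~$F$ from Theorem~\ref{thm:fibre-terms-are-nil}, observe it commutes, and apply 2-out-of-3. The only point the paper makes explicit that you gloss over is that the spaces in the fundamental square are \emph{connected}; your ``general principle'' about homotopy cartesian squares of pointed spaces requires this (or an all-basepoints version), so you should mention it.
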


\begin{proof}
  The previous Theorem asserts that in the chain of maps
  \begin{displaymath}
    h \sdot \ch \vb^{h_{-}} \rTo^{\alpha} h_{+} \sdot \ch \vb^{h_{0}}
    \rTo^{F} q \sdot \cT{+}
  \end{displaymath}
  both $F$ and~$F \circ \alpha$ are homotopy equivalences. It follows
  that $\alpha$ is a homotopy equivalence as well. Since the
  fundamental square consists of connected spaces it is homotopy
  cartesian.
\end{proof}

\subsection*{Auxiliary categories}

We let $\mathbb{C}_{+}$ denote the category of bounded chain
complexes of ``vector bundles on $\operatorname{spec}(\Rp)$'', that
is, diagrams of the form
\[Y^{0} \lTo^{\upsilon^{+}} Y^{ + }\]
with $Y^{0}$ and $Y^{ + }$ being bounded complexes of finitely
generated projective over $\R$ and $\Rp$, respectively, subject to the
condition that the associated
adjoint map $Y^{0} \lTo Y^{ + } \tensor_{\Rp} R$ be
an isomorphism. A \textit{morphism} $g = (g^{0}, g^{+})$ from
$Y^{0} \lTo^{\upsilon^{+}} Y^{ + }$ to
$Z^{0} \lTo^{\zeta^{+}} Z^{ + }$ consists of an $\R$\nbd-linear map
$g^{0} \colon Y^{0} \rTo Z^{0}$ and an $\Rp$\nbd-linear map
$g^{+} \colon Y^{+} \rTo Z^{+}$ such that $\zeta^{+} \circ g^{+} =
g^{0} \circ \upsilon^{+}$.

By $\mathbb{D}_{+}$ we denote the full subcategory of~$\mathbb{C}_{+}$
consisting of objects $Y^{0} \lTo^{\upsilon^{+}} Y^{ + }$ such
that all chain modules $Y^{0}_{k}$ are stably induced from~$\Rn$, that
is, such that
\begin{displaymath}
  [Y^{0}_{k}] \in \im \big( K_{0}(\Rn) \rTo K_{0} (\R)
  \big) \ ,
\end{displaymath}
or equivalently, such that $\alpha \big( [Y^{0}_{k}] \big) = 0$ in
$K_{0} (\Rn \downarrow \R)$ for all~$k$.

\begin{lemma}
  \label{lem:D_stably_extends}
  Every object of~$\mathbb{D}_{+}$ stably extends to an object
  of~$\cC$. That is, given an object
  $Y^{0} \lTo^{\upsilon^{+}} Y^{ + }$ of~$\mathbb{D}_{+}$ there exist
  an object
  \begin{displaymath}
    Z^{-} \rTo^{\zeta^{-}} Z^{0} \lTo^{\zeta^{+}} Z^{+} 
  \end{displaymath}
  of~$\cC$ and finitely many numbers $j_{k} \geq 0$ together with a
  commutative diagram
  \begin{diagram}
    Y^{0} \oplus \bigoplus_{k} D(k, \R^{j_{k}}) %
    & \lTo[l>=5em]^{\upsilon^{+} \oplus \inc} %
    & Y^{+} \oplus \bigoplus_{k} D(k, \Rp^{j_{k}}) \\ %
    \dTo<{=} && \dTo<{=} \\ %
    Z^{0} & \lTo^{\zeta^{+}} & Z^{+} \ ,
  \end{diagram}
  where ``$\inc$'' denotes the obvious inclusion map based on the ring
  inclusion $\R \supseteq \Rp$.
\end{lemma}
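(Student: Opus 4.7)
My plan is to reduce this to Proposition~\ref{prop:make_induced} applied to the ring inclusion $\Rn \subseteq \R$, which is well-behaved thanks to the strong grading (Example~\ref{ex:strong_implies_diamond}).

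First, I would examine the complex $Y^{0}$ in isolation. It is a bounded complex of finitely generated projective $\R$-modules, and the membership hypothesis for~$\mathbb{D}_{+}$ says precisely that every chain module $Y^{0}_{k}$ is stably induced from~$\Rn$. Since $\Rn \subseteq \R$ satisfies condition~\eqref{eq:condition} by Example~\ref{ex:strong_implies_diamond}, Proposition~\ref{prop:make_induced} applies and provides a stabilisation of~$Y^{0}$ of the form
\begin{displaymath}
  Z^{0} = Y^{0} \oplus \bigoplus_{k} D(k, \R^{j_{k}})
\end{displaymath}
together with a bounded complex $Z^{-}$ of finitely generated projective $\Rn$-modules and an isomorphism $Z^{-} \tensor_{\Rn} \R \iso Z^{0}$. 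I define $\zeta^{-} \colon Z^{-} \rTo Z^{0}$ to be the composite of the canonical map $Z^{-} \rTo Z^{-} \tensor_{\Rn} \R$ with this isomorphism; its adjoint $\zeta^{-}_{\sharp}$ is an isomorphism by construction.

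Next I would build the $\Rp$-side by matching the stabilisation on~$Y^{0}$: set
\begin{displaymath}
  Z^{+} = Y^{+} \oplus \bigoplus_{k} D(k, \Rp^{j_{k}})
\end{displaymath}
and define $\zeta^{+} = \upsilon^{+} \oplus \inc$, where $\inc \colon D(k, \Rp^{j_{k}}) \rTo D(k, \R^{j_{k}})$ is the levelwise inclusion. Because tensoring with~$\R$ commutes with direct sums and sends $D(k, \Rp^{j_{k}})$ to $D(k, \R^{j_{k}})$, and because the adjoint $\upsilon^{+}_{\sharp}$ of~$\upsilon^{+}$ is an isomorphism by hypothesis on~$Y$, the adjoint $\zeta^{+}_{\sharp}$ of $\zeta^{+}$ is an isomorphism $Z^{+} \tensor_{\Rp} \R \iso Z^{0}$. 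This verifies the sheaf condition on both sides.

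Assembling these ingredients yields the required vector bundle $Z^{-} \rTo^{\zeta^{-}} Z^{0} \lTo^{\zeta^{+}} Z^{+}$ in~$\cC$, and by the very definitions of~$Z^{0}$ and~$Z^{+}$ the required commutative diagram holds on the nose. The main obstacle here is really encapsulated in the proof of Proposition~\ref{prop:make_induced}, namely lifting the differentials of~$Y^{0}$ through the filtered colimit $\R = \bigcup_{k} \Rnn{k}$; once that lifting result is invoked, the construction of~$Z^{+}$ and the verification of the sheaf condition are purely formal.
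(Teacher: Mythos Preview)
Your proof is correct and follows essentially the same approach as the paper: both apply Proposition~\ref{prop:make_induced} to the inclusion $\Rn \subseteq \R$ (which satisfies condition~\eqref{eq:condition} by Example~\ref{ex:strong_implies_diamond}) to produce $Z^{-}$ and the stabilisation~$Z^{0}$, and then define $Z^{+}$ and~$\zeta^{+}$ by mirroring the stabilisation on the $\Rp$-side. Your write-up is in fact slightly more explicit than the paper's in verifying the sheaf condition for~$\zeta^{+}$.
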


\begin{proof}
  We apply Proposition~\ref{prop:make_induced} to the ring
  inclusion $f \colon \Rn \rTo \R$ and the chain complex $D =
  Y^{0}$. We obtain a stabilisation
  \begin{displaymath}
    D' = Y^{0} \oplus \bigoplus_{k} D(k, \R^{j_{k}})
  \end{displaymath}
  of $D = Y^{0}$ and a bounded complex of finitely generated
  projective $\Rn$\nbd-modules $Z^{-} = C'$ such that there is an
  isomorphism $i \colon Z^{-} \tensor_{\Rn} \R \rTo D'$. Write $Z^{0}$
  in place of~$D'$, let $\zeta^{-} \colon Z^{-} \rTo Z^{0}$ be the
  $\Rn$\nbd-linear map adjoint to~$i$, and define
  \begin{displaymath}
    Z^{+} = Y^{+} \oplus \bigoplus_{k} D(k, \Rp^{j_{k}}) \ ;
  \end{displaymath}
  with $\zeta^{+} = \upsilon^{+} \oplus \inc$ the data satisfies all
  the requirements of the Lemma.
\end{proof}

A morphism $g = (g^{0}, g^{+})$ in~$\mathbb{C}_{+}$ is called a
\textit{cofibration} if both $g^{0}$ and~$g^{+}$ are levelwise
injections such that $\coker(g)$ is an object of~$\mathbb{C}_{+}$. We
call $g$ an \textit{$h_{+}$\nbd-equivalence} if $g^{+}$ is a
quasi-isomorphism (and hence so is
$g^{0} \iso g^{+} \tensor_{\Rp} \R$).

\begin{lemma}
  \label{lem:C_D_are_Waldhausen}
  With these definitions, both $\mathbb{C}_{+}$ and $\mathbb{D}_{+}$
  are categories with cofibrations and weak equivalences.
\end{lemma}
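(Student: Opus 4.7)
The plan is to verify the Waldhausen axioms directly for both categories, noting that most of the required properties are inherited from the ambient category of bounded chain complexes of modules. The main substantive task, and the only real obstacle, is to show that the defining structural conditions---the sheaf condition for $\mathbb{C}_{+}$ and the stably-induced condition for $\mathbb{D}_{+}$---survive the formation of pushouts and cokernels. This reduces to one key input: $\R$ is flat as a right $\Rp$\nbd-module, thanks to the strong grading.

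I would begin with $\mathbb{C}_{+}$ and construct pushouts componentwise: given a cofibration $f \colon A \rTo B$ and an arbitrary morphism $g \colon A \rTo C$, set $P^{0} = B^{0} \sqcup_{A^{0}} C^{0}$ and $P^{+} = B^{+} \sqcup_{A^{+}} C^{+}$ as levelwise pushouts of complexes of modules. Because the functor $\nix \tensor_{\Rp} \R$ commutes with pushouts, the sheaf condition is preserved: $P^{+} \tensor_{\Rp} \R \iso B^{0} \sqcup_{A^{0}} C^{0} = P^{0}$. The canonical map $C \rTo P$ is a levelwise injection whose cokernel is isomorphic to $\coker(f)$, already in $\mathbb{C}_{+}$ by hypothesis, so $C \rTo P$ is itself a cofibration. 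The remaining cofibration axioms---that isomorphisms and the unique map out of the zero object are cofibrations---are immediate.

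For $\mathbb{D}_{+}$ the extra point is that the defining condition $[Y^{0}_{k}] \in \im\big(K_{0}(\Rn) \rTo K_{0}(\R)\big)$ must be preserved under pushouts and cokernels. Since every cofibration in $\mathbb{C}_{+}$ is levelwise split in each component (its cokernel lies in $\mathbb{C}_{+}$, hence consists of projectives), the pushout decomposes at each chain level as $P^{0}_{k} \iso C^{0}_{k} \oplus \coker(f)^{0}_{k}$, and so $[P^{0}_{k}] = [C^{0}_{k}] + [\coker(f)^{0}_{k}]$ is a sum of classes already lying in the image of $K_{0}(\Rn) \rTo K_{0}(\R)$. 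The same levelwise splitting disposes of cokernels.

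The weak equivalence axioms are routine: saturation holds because composites of quasi-isomorphisms are quasi-isomorphisms, and the gluing lemma for $h_{+}$\nbd-equivalences follows from the classical gluing lemma for chain complexes applied to the $+$\nbd-components, with the $0$\nbd-components inheriting the quasi-isomorphism property after tensoring with the flat right $\Rp$\nbd-module~$\R$. The verification is therefore systematic rather than deep, with flatness of $\R$ over $\Rp$ carrying the structural weight throughout.
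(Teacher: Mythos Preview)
Your proof is correct, but the paper takes a shorter route for~$\mathbb{C}_{+}$: rather than verifying the axioms by hand, it observes that the forgetful functor $\big( Y^{0} \lTo Y^{+} \big) \mapsto Y^{+}$ and the induction functor $M^{+} \mapsto \big( M^{+} \tensor_{\Rp} \R \lTo M^{+} \big)$ establish an equivalence between~$\mathbb{C}_{+}$ and the known \textsc{Waldhausen} category $\chpr{\Rp}$, so the structure is inherited for free. Your direct componentwise verification works just as well and makes the mechanism more visible, but the paper's shortcut avoids it entirely. For~$\mathbb{D}_{+}$ the two arguments are essentially identical: both reduce to the additivity relation $\alpha\big([\coker g^{0}_{k}]\big) = \alpha\big([Z^{0}_{k}]\big) - \alpha\big([Y^{0}_{k}]\big) = 0$ in~$K_{0}(\Rn \downarrow \R)$, though you phrase it via the levelwise splitting and the paper states it directly for cokernels.

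One small remark: you flag flatness of~$\R$ over~$\Rp$ as the ``key input'', but your actual arguments do not use it. The sheaf condition passes to pushouts because tensor products commute with colimits (right exactness suffices), and the gluing lemma only concerns the $+$\nbd-components by the very definition of $h_{+}$\nbd-equivalence. Your aside about the $0$\nbd-components inheriting quasi-isomorphisms via flatness is true but superfluous here.
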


\begin{proof}
  This is clear for~$\mathbb{C}_{+}$ since this category is equivalent
  to the category of bounded chain complexes of finitely generated
  projective $\Rp$-modules, via the functors
  \begin{displaymath}
    \big( Y^{0} \lTo Y^{+} \big) \quad \mapsto \quad Y^{+} %
    \qquad \text{and} \qquad %
    M^{+} \quad \mapsto \quad %
    \big( M^{+} \tensor_{\Rp} R \lTo M^{+} \big) \ . %
  \end{displaymath}
  It remains to observe that the cokernel of a cofibration~$g$
  in~$\mathbb{D}_{+}$ is automatically an object of~$\mathbb{D}_{+}$
  since, given $g^{0}_{k} \colon Y^{0}_{k} \rTo Z^{0}_{k}$, we have
  the equality
  \begin{displaymath}
    \alpha \big( [\coker g^{0}_{k}] \big) = \alpha \big( [Z^{0}_{k}]
    \big) - \alpha \big( [Y^{0}_{k}] \big) = 0
  \end{displaymath}
  in the group $K_{0} (\Rn \downarrow \R)$.
\end{proof}

We make the analogous symmetric definitions for $\mathbb{C}_{-}$
and~$\mathbb{D}_{-}$. The category $\mathbb{C}_{0}$ is defined to be
the category of bounded chain complexes of finitely generated
projective $\R$-modules, with $\mathbb{D}_{0}$ the full subcategory of
those objects $Y^{0}$ such that all chain modules $Y^{0}_{k}$ are
stably induced from both $\Rn$ and~$\Rp$, that is, such that
\begin{displaymath} %
  [Y^{0}_{k}] \in \im \big( K_{0} \Rn %
  \rTo K_{0} R \big) %
  \,\cap\,%
  \im \big( K_{0} \Rp \rTo K_{0} R \big) %
  \qquad \text{for all \(k\)}\ .
\end{displaymath}
Lemmas~\ref{lem:D_stably_extends} and~\ref{lem:C_D_are_Waldhausen} are
valid \textit{mutatis mutandis} for these categories.

\begin{lemma}
  \label{lem:correct_K0}
  The forgetful functor
  \begin{displaymath}
    \big( Y^{0} \lTo Y^{+} \big) \quad \mapsto \quad Y^{+}
  \end{displaymath}
  induces isomorphisms on algebraic $K$-groups
  \[K_{q} \mathbb{D}_{+} \rTo^{\iso} K_{q} \Rp \qquad (q > 0) \]
  and an injection $K_{0} \mathbb{D}_{+} \rTo^{\subseteq} K_{0} \Rp$.
  --- These statements hold \textit{mutatis mutandis} for the
  analogous maps $K_{q} \mathbb{D}_{-} \rTo K_{q} \Rn$ and
  $K_{q} \mathbb{D}_{0} \rTo K_{q} R$.
\end{lemma}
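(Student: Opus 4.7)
My strategy is to reduce the statement to a question about exact categories by means of the \textsc{Gillet}--\textsc{Waldhausen} theorem, and then to apply \textsc{Grayson}'s cofinality theorem. The proof of Lemma~\ref{lem:C_D_are_Waldhausen} exhibits an equivalence of Waldhausen categories $\mathbb{C}_{+} \simeq \chpr{\Rp}$ sending $(Y^{0} \lTo Y^{+})$ to $Y^{+}$, which intertwines the forgetful functor with the identity. Under this equivalence, $\mathbb{D}_{+}$ corresponds to the full Waldhausen subcategory $\ch \mathcal{P}_{+}$, where $\mathcal{P}_{+} \subseteq \rbPr{\Rp}$ is the additive full subcategory of finitely generated projective $\Rp$\nbd-modules $P$ with $[P \tensor_{\Rp} \R] \in \im (K_{0} \Rn \rTo K_{0} \R)$; this is because the adjoint isomorphism $Y^{0} \iso Y^{+} \tensor_{\Rp} \R$ rewrites the chain-wise stably-induced hypothesis as $Y^{+}_{k} \in \mathcal{P}_{+}$. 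Since $\rbPr{\Rp}$ is split exact and $\im (K_{0} \Rn \rTo K_{0} \R)$ is a subgroup, the subcategory $\mathcal{P}_{+}$ is closed under extensions (equivalently, direct sums) in $\rbPr{\Rp}$, and thus inherits the structure of a (split) exact category compatible with the Waldhausen structure on $\ch \mathcal{P}_{+}$.

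Applying the \textsc{Gillet}--\textsc{Waldhausen} theorem to both $\mathcal{P}_{+}$ and $\rbPr{\Rp}$, I identify the map $K_{q} \mathbb{D}_{+} \rTo K_{q} \Rp$ with the map $K_{q} \mathcal{P}_{+} \rTo K_{q} \rbPr{\Rp}$ induced by inclusion. \textsc{Grayson}'s cofinality theorem delivers exactly the conclusion ``iso for $q > 0$, injection for $q = 0$'' as soon as $\mathcal{P}_{+}$ is cofinal in $\rbPr{\Rp}$, that is, as soon as every $P$ admits a $P'$ with $P \oplus P' \in \mathcal{P}_{+}$. The key observation that makes cofinality immediate is that every finitely generated free $\Rp$\nbd-module lies in $\mathcal{P}_{+}$: indeed, $\Rp^{n} \tensor_{\Rp} \R \iso \R^{n} \iso \Rn^{n} \tensor_{\Rn} \R$ is induced from~$\Rn^{n}$. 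Given an arbitrary $P$, picking a complement $P'$ with $P \oplus P' \iso \Rp^{n}$ gives $P \oplus P' \in \mathcal{P}_{+}$ at once.

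The proofs for $\mathbb{D}_{-}$ and $\mathbb{D}_{0}$ are structurally identical, replacing $\Rp$ by $\Rn$ or by~$\R$ and using the appropriate stably-induced condition; in each case the finitely generated free modules verify the defining condition (for~$\mathbb{D}_{0}$, one needs $\R^{n}$ to be simultaneously induced from~$\Rn^{n}$ and from~$\Rp^{n}$, which is clear). The step one might expect to be an obstacle --- verifying the cofinality hypothesis of \textsc{Grayson}'s theorem --- turns out to be formal: every module in the enveloping category is a direct summand of a free module, and free modules automatically satisfy the defining condition of the smaller subcategory.
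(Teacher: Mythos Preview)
Your proof is correct and follows essentially the same route as the paper: both reduce to an inclusion of exact categories (your $\mathcal{P}_{+} \subseteq \rbPr{\Rp}$ is the paper's $\mathbb{D}' \subseteq \mathbb{C}'$) and then invoke \textsc{Grayson} cofinality, with cofinality verified by complementing an arbitrary projective to a free module. The only cosmetic difference is that you cite \textsc{Gillet}--\textsc{Waldhausen} explicitly to pass from chain complexes to modules, whereas the paper phrases the same reduction as computing via \textsc{Quillen}'s $Q$\nbd-construction on the degree-zero subcategories.
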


\begin{proof}
  As remarked before, the functor $\cY \mapsto Y^{+}$ establishes an
  equivalence of $\mathbb{C}_{+}$ with the category of bounded chain
  complexes of finitely generated projective $\Rp$\nbd-modules, and
  $\mathbb{D}_{+}$ corresponds to the subcategory of complexes~$Y^{+}$
  such that $Y^{+} \tensor_{\Rp} \R$ consists of modules which are
  stably induced from~$\Rn$. Let $\mathbb{D}'$ and $\mathbb{C}'$
  denote the full subcategories of complexes concentrated in chain
  level~$0$. Then the map of $K$\nbd-groups in question can be
  computed using \textsc{Quillen}'s $Q$\nbd-construction, applied to
  the inclusion of exact categories
  $\mathbb{D}' \subseteq \mathbb{C}'$. The result is now an immediate
  consequence of \textsc{Grayson} cofinality
  \cite[Theorem~1.1]{MR559850} since $\mathbb{D}'$ is closed under
  extension in $\mathbb{C}'$; indeed, an object $\cY \in \mathbb{C}'$
  is in~$\mathbb{D}'$ if and only if
  $\alpha \big( [Y^{0}] \big) = 0 \in K_{0} (\Rn \downarrow \R)$, and
  $K$\nbd-theory is additive on short exact sequences. To see that the
  former category is cofinal in the latter it suffices to observe that
  every finitely generated projective module can be complemented to a
  finitely generated free one, which is automatically stably induced.
\end{proof}

\subsection*{The corners of the fundamental square}

We can now identify the corners of the fundamental square as the
algebraic $K$\nbd-theory spaces of the categories $\mathbb{D}^{-}$,
$\mathbb{D}^{0}$ and~$\mathbb{D}^{+}$.

\begin{lemma}
  \label{lem:forget-C-D}
  The forgetful functor $\Phi^{+} \colon \cC \rTo
  \mathbb{D}_{+}$, given by
  \begin{displaymath}
    \cY = \big( Y^{-} \rTo Y^{0} \lTo Y^{+} \big) %
    \quad \mapsto \quad %
    \Phi^{+}(\cY) = \big( Y^{0} \lTo Y^{+} \big) \ ,%
  \end{displaymath}
  induces a homotopy equivalence on
  $\mathcal{S}_{\bullet}$-constructions with respect to
  $h_{+}$-equivalences:
  \begin{displaymath}
    h_{+} \sdot \cC \rTo^{\simeq} h_{+} \sdot \mathbb{D}_{+}
  \end{displaymath}
  The statement holds \textit{mutatis mutandis} for $\mathbb{D}_{0}$
  and~$\mathbb{D}_{-}$ as well.
\end{lemma}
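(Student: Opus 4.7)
The plan is to invoke \textsc{Waldhausen}'s approximation theorem~\cite[Theorem~1.6.7]{MR802796} applied to $\Phi^{+}$, using the fact that both categories have cylinder functors so that (App~2) need not be checked as a cofibration. First I note that $\Phi^{+}$ is well-defined: for $\cY = (Y^{-}, Y^{0}, Y^{+}) \in \cC$ the sheaf condition gives $Y^{0} \iso Y^{-} \tensor_{\Rn} \R$, so each $Y^{0}_{k}$ is actually induced (hence stably induced) from~$\Rn$. Exactness of $\Phi^{+}$ is evident, and (App~1) is immediate from the definitions: $f = (f^{-}, f^{0}, f^{+})$ is an $h_{+}$\nbd-equivalence in~$\cC$ iff $f^{+}$ is a quasi-isomorphism iff $\Phi^{+}(f)$ is an $h_{+}$\nbd-equivalence in $\mathbb{D}_{+}$.

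For (App~2), I would start with $\cY \in \cC$ and a morphism $g = (g^{0}, g^{+}) \colon \Phi^{+}(\cY) \rTo \cV$ in~$\mathbb{D}_{+}$, where $\cV = (Z^{0} \lTo Z^{+})$. By Lemma~\ref{lem:D_stably_extends} I can stably extend $\cV$ to an object $\cV' = (V^{-}, V'^{0}, V'^{+})$ of~$\cC$ whose image under~$\Phi^{+}$ is a stabilisation of~$\cV$; the projection $p \colon \Phi^{+}(\cV') \rTo \cV$ is then an $h_{+}$\nbd-equivalence with canonical section $\iota$ satisfying $p \circ \iota = \id_{\cV}$. Composing with $\iota$ yields $\iota \circ g \colon \Phi^{+}(\cY) \rTo \Phi^{+}(\cV')$ whose $0$\nbd-component is an $\R$\nbd-linear chain map $(\iota g)^{0} \colon Y^{0} \rTo V'^{0} = V^{-} \tensor_{\Rn} \R$.

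Since the ring inclusion $\Rn \subseteq \R$ satisfies condition~\eqref{eq:condition} by Example~\ref{ex:strong_implies_diamond}, and since $Y^{-}$ is a bounded complex of finitely generated $\Rn$\nbd-modules, the stage-by-stage colimit argument of Proposition~\ref{prop:induced_complex} produces, for $q$ sufficiently large, an $\Rn$\nbd-linear chain map $a^{-} \colon Y^{-} \rTo V^{-} \tensor_{\Rn} \Rnn{q}$ making the composition $Y^{-} \rTo Y^{0} \rTo^{(\iota g)^{0}} V'^{0}$ factor through the structure map of $\cV'(q,0)$, that is, $\xi^{-} \circ a^{-} = (\iota g)^{0} \circ \upsilon^{-}$. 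Because twisting by $(q,0)$ only modifies the $-$\nbd-component, we have $\Phi^{+}(\cV'(q,0)) = \Phi^{+}(\cV')$, so the triple $a := (a^{-}, (\iota g)^{0}, (\iota g)^{+})$ is a morphism $\cY \rTo \cV'(q,0)$ in~$\cC$ with $\Phi^{+}(a) = \iota \circ g$. Taking $\cX = \cV'(q,0)$ and $h = p$ then gives $h \circ \Phi^{+}(a) = p \circ \iota \circ g = g$, which verifies~(App~2) and completes the application of the approximation theorem.

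The hard part is the construction of $a^{-}$ as a chain map, but this is precisely the iterated-factorisation argument already carried out in Proposition~\ref{prop:induced_complex} (and used again in the proof of Theorem~\ref{thm:fibre-terms-are-nil}). The symmetric claims for $\mathbb{D}_{-}$ and $\mathbb{D}_{0}$ follow by exchanging the roles of~$\Rn$ and~$\Rp$; for $\mathbb{D}_{0}$ one applies the lifting procedure on both the $-$ and the $+$ side, using the analogue of Lemma~\ref{lem:D_stably_extends} for $\mathbb{D}_{0}$.
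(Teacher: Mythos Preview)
Your proof is correct and follows essentially the same route as the paper's: apply \textsc{Waldhausen}'s approximation theorem, verify (App~1) directly from the definition of $h_{+}$\nbd-equivalences, and for (App~2) stably extend the target via Lemma~\ref{lem:D_stably_extends} and then factor the composite $Y^{-} \rTo Y^{0} \rTo V'^{0} \iso V^{-} \tensor_{\Rn} \R = \bigcup_{q} V^{-} \tensor_{\Rn} \Rnn{q}$ through a finite stage using that $Y^{-}$ is a bounded complex of finitely generated $\Rn$\nbd-modules, yielding the required morphism into the twist $\cV'(q,0)$. One small remark: your invocation of Proposition~\ref{prop:induced_complex} is slightly heavier than needed, since here the chain complex $Y^{-}$ and the chain map are already given and the inclusion $V^{-} \tensor_{\Rn} \Rnn{q} \hookrightarrow V^{-} \tensor_{\Rn} \R$ is an injective chain map, so the levelwise factorisation through a common stage $q$ is automatically a chain map; no iterated differential-adjustment is required.
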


\begin{proof}
  By definition of $h_{+}$\nbd-equivalences the forgetful functor
  respects and detects weak equivalences. Let
  \begin{displaymath}
    \cX = \big( X^{-} \rTo^{\xi^{-}} X^{0} \lTo^{\xi^{+}} X^{+} \big)
  \end{displaymath}
  be an object of~$\cC$, and let
  \begin{displaymath}
    \overleftarrow{\cY} = \big( Y^{0} \lTo^{\upsilon^{+}} Y^{+} \big)
  \end{displaymath}
  be an object of~$\mathbb{D}_{+}$.  Given a morphism
  \begin{displaymath}
    g = (g^{0}, g^{+}) \colon \Phi^{+}(\cX) \rTo \overleftarrow{\cY}
  \end{displaymath}
  we construct the object $\cZ \in \cC$ associated
  with~$\overleftarrow{\cY}$ as described in 
  Lemma~\ref{lem:D_stably_extends}; denote the composition of~$g$ with
  the inclusion $\overleftarrow{\cY} \rTo \Phi^{+}(\cZ)$ by
  $h = (h^{0}, h^{+})$. The composite map
  \begin{displaymath}
    X^{-} \rTo^{\xi^{-}} X^{0} \rTo^{h^{0}} Z^{0} 
  \end{displaymath}
  factors as
  \begin{displaymath}
    X^{-} \rTo^{h^{-}} Z^{-} \tensor_{\Rn} \Rnn{q} \rTo^{\subseteq}
    Z^{-} \tensor_{\Rn} \R \iso Z^{0}
  \end{displaymath}
  for sufficiently large $q \geq 0$; this is because
  $X^{-}$ is a bounded complex of finitely generated
  $\Rn$\nbd-modules, and because
  \[Z^{0} \iso Z^{-} \tensor_{\Rn} \R = Z^{-} \tensor_{\Rn} \bigcup_{k
    \geq 0} \Rnn{k} = \bigcup_{k \geq 0} Z^{-} \tensor_{\Rn} \Rd{k} \
  .\] This results in a map
  \begin{displaymath}
    h = (h^{-}, h^{0}, h^{+}) \colon \cX \rTo \cZ(q,0)
  \end{displaymath}
  in~$\cC$ (we identify the canonically isomorphic complexes
  $\Phi^{+}(\cZ) = Z^{+}$ and
  $\Phi^{+} \big( \cZ(q,0) \big) = Z^{+} \tensor_{\Rp} \R$ here). The
  projection map
  $p \colon \Phi^{+}\big(\cZ(q,0)\big) \rTo \overleftarrow{\cY}$ is an
  $h_{+}$\nbd-equivalence, and satisfies the condition
  $p \circ \Phi^{+}(h) = g$. By \textsc{Waldhausen}'s approximation
  theorem \cite[Theorem~1.6.7]{MR802796}, $\Phi^{+}$ induces a
  homotopy equivalence on $\sdot$\nbd-con\-struc\-tions.
\end{proof}

\section{Establishing the Mayer-Vietoris sequence}
\label{sec:MV}

We will now establish the \textsc{Mayer}-\textsc{Vietoris} sequence of
Theorem~\ref{thm:MV-sequence}. As before let $\R$ be a strongly
$\bZ$-graded ring. The induction functors
\begin{displaymath}
  j^{-}_{*} = (\nix \tensor_{\Rn} R) \qquad \text{and} \qquad %
  j^{+}_{*} = (\nix \tensor_{\Rp} R)
\end{displaymath}
give rise to maps
$\gamma = j^{-}_{*} - j^{+}_{*} \colon K_{q} (\Rn) \oplus K_{q} (\Rp)
\rTo K_{q} (R)$, for $q \geq 0$.

In view of Lemma~\ref{lem:forget-C-D}, the homotopy cartesian
fundamental square of the projective line~(\ref{eq:fund}) yields a
\textsc{Mayer}-\textsc{Vietoris} sequence of algebraic $K$-groups
\begin{multline*}
  \ldots \rTo^{\gamma} K_{q+1} \mathbb{D}_{0} %
  \rTo^{\delta} K_{q} \mathbb{P}^1 %
  \rTo[l>=6em]^{\beta = (\beta_{-}, \beta_{+})} K_{q} \mathbb{D}_{-}
  \oplus K_{q} \mathbb{D}_{+} \\%
  \rTo^{\gamma = \gamma_{-} - \gamma_{+}} K_{q} \mathbb{D}_{0} %
  \rTo^{\delta} \ldots \ ,
\end{multline*}
for $q \geq 0$, ending with a surjective homomorphism
$K_{0} \mathbb{D}_{-} \oplus K_{0} \mathbb{D}_{+} \rTo K_{0}
\mathbb{D}_{0}$.
By Lemma~\ref{lem:correct_K0} this sequence coincides, for $q>0$, with
the one of Theorem~\ref{thm:MV-sequence}, so we only need to look at
its tail end. Using Lemma~\ref{lem:correct_K0} again we construct the
following commutative diagram:
\begin{diagram}
  K_{1} (R) & \rTo & K_{0} (\pp) %
  & \rTo & K_{0} (\Rn) \oplus K_{0} (\Rp) %
  & \rTo^{\bar \gamma} & K_{0} (R) %
  & \rTo & \kay (\Rd{0}) & \rTo 0\\%
  \uTo<{=} && \uTo<{=} && \uTo<{\subseteq} && \uTo<{\subseteq} \\
  K_{1} \mathbb{D}_{0} & \rTo & K_{0} (\pp) %
  & \rTo^{\beta} & K_{0} \mathbb{D}_{-} \oplus K_{0} \mathbb{D}_{+} %
  & \rTo^{\gamma} & K_{0} \mathbb{D}_{0} %
  & \rTo & 0 \\%
\end{diagram}
We know that the bottom row is exact, and argue that this remains true
for the top row. --- Replacing the target of~$\beta$ by a larger group
does not change $\ker(\beta)$, so the top row is exact at
$K_{0}(\pp)$. It is exact at $K_{0} (R)$ and~$\kay (\Rd{0})$ by
definition of the latter group. Thus it remains to verify exactness
at the third entry.

So let $(x,y) \in \ker \bar \gamma$ be given. We can find finitely
generated projective modules $P$ and $Q$ over $\Rn$ and~$\Rp$,
respectively, and numbers $p, q \geq 0$, such that
\begin{displaymath}
  x = [P] - [\Rn^{p}] \in K_{0} (\Rn) \qquad \text{and} \qquad y = [Q]
  - [\Rp^{q}] \in K_{0} (\Rp) \ .
\end{displaymath}
 The condition $\bar\gamma (x,y) = 0$ translates into the equality
 \begin{displaymath}
   [P \tensor_{\Rn} R] - [\R^{p}] = [Q \tensor_{\Rp} R] -
   [\R^{q}]
 \end{displaymath}
in~$K_{0} (R)$. Consequently, there exists $s \geq 0$ such that
\begin{displaymath}
  \big( P \tensor_{\Rn} R \big) \oplus \R^{q} \oplus \R^{s} %
  \iso %
  \big( Q \tensor_{\Rp} R \big) \oplus \R^{p} \oplus \R^{s} \ .
\end{displaymath}
As the right-hand module is induced from~$\Rp$ this shows that
$P \tensor_{\Rn} R$ is stably induced from~$\Rp$ so that
the diagram
\begin{displaymath}
  P \rTo P \tensor_{\Rn} R
\end{displaymath}
defines an object of~$\mathbb{D}_{-}$. Since $\Rn^{p} \rTo \R^{p}$ is
an object of~$\mathbb{D}_{-}$ as well we conclude that
$x \in K_{0} \mathbb{D}_{-}$. By a symmetric argument we can show
$y \in K_{0} \mathbb{D}_{+}$. As $\gamma(x,y) = 0$ (the fourth
vertical map is injective), and as the bottom row is exact, we know
that $(x,y) = \beta(z)$ for some $z \in K_{0}(\pp)$, which shows
exactness of the top row. This completes the proof of
Theorem~\ref{thm:MV-sequence}.\qed

\section{Proof of the fundamental theorem}

\subsection*{The $K$-theory of the projective line revisited}

Recall from Theorem~\ref{thm:HM} that there are
isomorphisms of $K$\nbd-groups
\begin{displaymath}
  \tilde \alpha = \big( \Psi_{-1,0} \ \, \Psi_{0,0} \big) \colon %
  K_{q} \Rd{0} \oplus K_{q} \Rd{0} \rTo K_{q} \pp \ , \ %
  \big( [P], [Q] \big) \mapsto \big[ \Psi_{-1,0} (P) \big] %
  + \big[ \Psi_{0,0} (Q) \big] \ ,
\end{displaymath} 
where $\Psi_{k,\ell}$ stands for the canonical sheaf functors
(Definition~\ref{def:psi}). By pre-com\-pos\-ing $\tilde \alpha$ with the
invertible map
$\left( \begin{smallmatrix} -1&0\\1&1 \end{smallmatrix} \right)$, we
obtain modified isomorphisms
\begin{equation}
  \label{eq:modified_iso}
  \begin{multlined}
    \alpha = \big( -\Psi_{-1,0} + \Psi_{0,0} \quad \Psi_{0,0} \big)
    \colon %
    K_{q} \Rd{0} \oplus K_{q} \Rd{0} \rTo K_{q} \pp \ , \qquad
    \\ %
    \qquad
    \big( [P], [Q] \big) \mapsto -\big[ \Psi_{-1,0} (P) \big] + %
    \big[ \Psi_{0,0} (P) \big] %
    + \big[ \Psi_{0,0} (Q) \big] \ .
  \end{multlined}
\end{equation}
(This map cannot be confused with the map~$\alpha$
from~\eqref{eq:alpha}.) As we have a cylinder functor at our
disposal, the additivity theorem implies that we can model the minus
sign by taking suitable ``homotopy cofibres'' of maps of
functors. Concretely, the isomorphisms~$\alpha$ are induced by the
functor
\begin{equation}
  \begin{multlined}
    A \colon \chp \times \chp \rTo \vb_{0} \ , \\ %
    (C,D) \mapsto \cone \big( \Psi_{-1,0} (C) \rTo \Psi_{0,0} (C)
    \big) %
    \oplus \Psi_{0,0} (D) \qquad\qquad\qquad
  \end{multlined}\label{eq:functor_A}
\end{equation}
and the ensuing homotopy equivalence of $K$\nbd-theory spaces
\begin{displaymath}
   \cone ( \Psi_{-1,0} \rTo \Psi_{0,0}) + \Psi_{0,0} \colon K(\Rd{0})
     \times K(\Rd{0}) \rTo K(\pp) \ ,
\end{displaymath}
where ``$+$'' refers to the $H$-space structure given by direct sum.
Alternatively, we can use the functor
\begin{multline*}
  \qquad\qquad\qquad A' \colon \chp \times \chp \rTo \vb_{0} \ , \\ %
  (C,D) \mapsto \Sigma \Psi_{-1,0} (C) \,\oplus\, \Psi_{0,0} (C)
  \,\oplus\, \Psi_{0,0} (D) \ ;
  \qquad\qquad\qquad
\end{multline*}
the maps induced by~$A$ and~$A'$ are homotopic, by the additivity
theorem.

\medbreak

It will be convenient to have an explicit homotopy inverse
for~$\alpha$. We record the following fact:

\begin{lemma}
  \label{lem:homotopy_inverse_alpha}
  The functor
  \begin{displaymath}
    \Xi \colon \vb_{0} \rTo \chp \times \chp \ , \ \cY \mapsto
    \begin{pmatrix}
      \Sigma \Gamma \cY(1,0) \,\oplus\, \Gamma \cY \,\oplus\, \Gamma
      \cY(1,-1) \\
      \Sigma \Gamma \cY(1,-1) \,\oplus\, \Gamma \cY(1,0)
    \end{pmatrix}
  \end{displaymath}
  induces a homotopy inverse on the level of $K$\nbd-theory spaces;
  here $\Gamma$ is the ``global sections'' functor from
  Definition~\ref{def:cohomology}.
\end{lemma}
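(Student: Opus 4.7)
\emph{Proof plan for Lemma \ref{lem:homotopy_inverse_alpha}.} By Theorem \ref{thm:HM} (and the fact that $\alpha$ differs from the map of that theorem by an invertible integer matrix), the map $\alpha$ is already a homotopy equivalence of $K$\nbd-theory spaces. It therefore suffices to exhibit a homotopy $\Xi \circ \alpha \simeq \id_{K(\Rd{0}) \times K(\Rd{0})}$.

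The first step is to replace $A$ by the variant $A'$ defined in the paragraph preceding the lemma. These functors differ only by a contractible summand coming from the standard cone/suspension comparison, so by the additivity theorem they induce homotopic maps on $K$\nbd-theory. The composite $\Xi \circ A'$ requires computing $\Gamma$ applied to each of the three relevant twists of $A'(C,D) = \Sigma \Psi_{-1,0}(C) \oplus \Psi_{0,0}(C) \oplus \Psi_{0,0}(D)$. Using the natural identification $\Psi_{a,b}(P)(k,\ell) \iso \Psi_{a+k,b+\ell}(P)$ (which holds because $\R$ is strongly graded, so the twisting rings multiply as $\Rnn{a} \tensor_{\Rn} \Rnn{k} \iso \Rnn{a+k}$ and symmetrically on the other side) together with the explicit cohomology formulas of Proposition \ref{prop:Gamma_Psi}, both components of $\Xi(A'(C,D))$ become direct sums of shifts and suspensions of $C$, $D$, $s_{1}C$, and $s_{1}D$.

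Next I would apply the additivity theorem repeatedly: direct sums of exact functors contribute additively on $K$\nbd-theory, and the suspension $\Sigma F$ of an exact functor contributes with a sign via the cofibre sequence $F \rTo \cyl(\id_{F}) \rTo \Sigma F$ (whose middle term is contractible). After cancellation of the $s_{1}C$ and $s_{1}D$ summands against their suspended counterparts, the remaining contributions in the two components are equivalent to the projections onto the first and second factor respectively. Realising each cancellation by a genuine short exact sequence of exact functors yields a zigzag of natural transformations, and the additivity theorem converts this zigzag into the desired homotopy between $\Xi \circ \alpha$ and the identity on $K(\Rd{0}) \times K(\Rd{0})$.

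The main obstacle is purely bookkeeping: one must keep careful track of signs, twist indices, and the order in which additivity is applied, and one must match the direct-sum decompositions arising from Proposition \ref{prop:Gamma_Psi} with natural splittings at the functor level. Conceptually the computation is a direct expansion using only the explicit formulas for $\Gamma \circ \Psi_{k,\ell}$ and the standard Waldhausen $K$\nbd-theory machinery; no new ideas beyond those already introduced in the paper are required.
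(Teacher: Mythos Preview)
Your approach is correct and essentially identical to the paper's: reduce to showing $\Xi \circ A' \simeq \id$, expand both components using Proposition~\ref{prop:Gamma_Psi} and the twist compatibility $\Psi_{a,b}(P)(k,\ell) \iso \Psi_{a+k,b+\ell}(P)$, then cancel via additivity and the fact that suspension models the $H$\nbd-space inverse. One minor caveat: the cancellations involve more than just the $s_{1}C$ and $s_{1}D$ terms (there are also $\Sigma C$, $\Sigma D$, and $\Sigma^{2}C$ contributions in the first component), so when you carry out the bookkeeping be sure to track all nine summands in the first component and all seven in the second.
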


\begin{proof}
  The functor~$A$ is known to induce a homotopy equivalence. Hence in
  view of the above remarks on~$A'$ it is enough to show that the map
  induced by $\Xi \circ A'$ is homotopic to the identity. Recalling
  the natural isomorphisms
  \begin{displaymath}
    \Gamma \Psi_{k,\ell} P \iso \bigoplus_{j=-\ell}^{k} s_{j} P
    \tensor_{\Rd{0}} \Rd{j} \qquad \text{and} \qquad \big( \Psi_{k,\ell}
    P \big)(a,b) \iso \Psi_{k+a, \ell+b} P \ ,
  \end{displaymath}
  from Proposition~\ref{prop:Gamma_Psi}, where
  $s_{j} P = P \tensor_{\Rd{0}} \Rd{j}$ is the $j$th shift of~$P$, one
  calculates readily that the first component of $\Xi \circ A' (C,D)$
  is
  \begin{align*}
    \ & \ 
        \Sigma^2 \Psi_{0,0} (C) \,\oplus\,
        \Sigma \Gamma \Psi_{1,0}(C)   \,\oplus\, \Sigma \Gamma
        \Psi_{1,0} (D) \\ %
      & \ \,\oplus \quad %
        \Sigma \Gamma \Psi_{-1,0}(C) \,\oplus\, \Gamma \Psi_{0,0} (C)
        \,\oplus\, \Gamma Y_{0,0} (D) \\ %
      & \ \,\oplus \quad %
        \Sigma \Gamma \Psi_{0,-1} (C) \,\oplus\, \Gamma \Psi_{1,-1} (C)
        \,\oplus\, \Gamma \Psi_{1,-1} (D) \\
    \iso \quad & \ \Sigma^2 C \,\oplus \, \Sigma C \,\oplus \, \Sigma s_{1} C
                 \,\oplus \, \Sigma D \,\oplus \, \Sigma s_{1} D \\
      & \ \,\oplus \quad C \,\oplus\, D \\
      & \ \,\oplus \quad s_{1} C \,\oplus\, s_{1} D \ ,
  \end{align*}
  while the second component is
  \begin{align*}
    \ & \ 
        \Sigma^2 \Psi_{0,-1} (C) \,\oplus\,
        \Sigma \Gamma \Psi_{1,-1}(C)   \,\oplus\, \Sigma \Gamma
        \Psi_{1,-1} (D) \\ %
      & \ \,\oplus \quad %
        \Sigma \Gamma \Psi_{0,0}(C) \,\oplus\, \Gamma \Psi_{1,0} (C)
        \,\oplus\, \Gamma Y_{1,0} (D) \\ %
    \iso \quad & \ \Sigma s_{1} C
                 \,\oplus \, \Sigma s_{1} D \\
      & \ \,\oplus \quad \Sigma C \,\oplus\, C \,\oplus\, s_{1} C
        \,\oplus\, D \,\oplus\, s_{1} D \ .
  \end{align*}
  Recalling that suspension represents the $H$\nbd-space structure
  inverse on $K$\nbd-theory spaces, this shows that the
  composition~$\Xi \circ A'$ induces a map that is homotopic to the
  identity as required.
\end{proof}

\subsection*{The modified Mayer-Vietoris sequence}

On a much more elementary level, we have automorphisms
\begin{displaymath}
  \eta =
  \begin{pmatrix}
    \id & -i^{-}_{*} p^{+}_{*} \\
    0 & \id
  \end{pmatrix}
  \colon %
  K_{q} (\Rn) \oplus K_{q} (\Rp) \rTo K_{q} (\Rn) \oplus K_{q} (\Rp) %
  \qquad (q \geq 0)
\end{displaymath}
with inverse given by
\begin{displaymath}
  \eta\inv =
  \begin{pmatrix}
    \id & i^{-}_{*} p^{+}_{*} \\
    0 & \id
  \end{pmatrix}
  \colon %
  K_{q} (\Rn) \oplus K_{q} (\Rp) \rTo K_{q} (\Rn) \oplus K_{q} (\Rp) %
  \ .
\end{displaymath}
Both maps are induced by functors, with the minus sign modelled by
suspension (making implicit use of the additivity theorem and the
presence of a cylinder functor again). Explicitly, $\eta$ is induced
by the functor
\begin{multline*}
  \qquad\qquad
  \chpr \Rn \times \chpr \Rp \rTo \chpr \Rn \times \chpr \Rp \ , \\
  (C,D) \mapsto \big( C \oplus \Sigma i_{*}^{-} p_{*}^{+} D,\, D \big)
  \ . \qquad
\end{multline*}
We use the map~$\eta$ and its inverse to construct a modified
\textsc{Mayer}-\textsc{Vietoris} sequence
\begin{equation}
  \label{eq:modified-MV}
  \begin{split}
    \ldots \rTo^{\gamma \eta\inv} K_{q+1}R %
    \rTo^{\alpha\inv\delta} K_{q} \Rd{0} \oplus K_{q} \Rd{0} \qquad \\ %
    \qquad
    \rTo^{\eta\beta\alpha} K_{q} \Rn \oplus K_{q} \Rp %
    \rTo^{\gamma \eta\inv} K_{q} R %
    \rTo^{\alpha\inv\delta} \ldots \ \ , 
  \end{split}
\end{equation}
ending with the exact sequence
\begin{displaymath}
  K_{0} \Rd{0} \oplus K_{0} \Rd{0} \\ %
  \rTo^{\eta\beta\alpha} K_{0} \Rn \oplus K_{0} \Rp %
  \rTo^{\gamma \eta\inv} K_{0} R %
  \rTo \kay \Rd{0} \rTo 0 \ .
\end{displaymath}

Note that any exact sequence of the form $A \rTo^{f} B \rTo^{g} C$
gives rise to a short exact sequence
\begin{displaymath}
  0 \rTo A / \ker(f) \rTo B \rTo \im(g) \rTo 0 \ ;
\end{displaymath}
in this way, sequence~\eqref{eq:modified-MV} can be split into short
exact sequences
\begin{gather*}
  0 \rTo %
  \big( K_{q} \Rn \oplus K_{q} \Rp) %
  \big/ \ker(\gamma \eta\inv) %
  \rTo^{\gamma \eta\inv} K_{q} R %
  \rTo^{\alpha\inv \delta} \im (\alpha\inv \delta) \rTo 0
  \quad (q > 0) \\
  \intertext{and} %
  0 \rTo %
  \big( K_{0} \Rn \oplus K_{0} \Rp) %
  \big/ \ker(\gamma \eta\inv) %
  \rTo^{\gamma \eta\inv} K_{0} R %
  \rTo \kay \Rd{0} \rTo 0 \ .
\end{gather*}
From exactness of the modified \textsc{Mayer}-\textsc{Vietoris}
sequence~\eqref{eq:modified-MV} again we have the equalities
$\ker(\gamma \eta\inv) = \im (\eta\beta\alpha)$ and
$\im (\alpha\inv \delta) = \ker(\eta\beta\alpha)$, so we obtain
short exact sequences
\begin{gather}
  \label{eq:SES1}
  0 \rTo \mathrm{coker} (\eta\beta\alpha)
  \rTo K_{q} R \rTo \ker(\eta\beta\alpha) \rTo 0  \quad (q>0) \\
  \intertext{and} %
  \label{eq:SES2}
  0 \rTo \mathrm{coker} (\eta\beta\alpha)
  \rTo K_{0} R \rTo \kay \Rd{0} \rTo 0 \ .
\end{gather}

\subsection*{The map $\eta\beta\alpha$}

On the level of categories the effect of the map $\beta\alpha$ is
to send $(P,Q) \in \chp \times \chp$ to $(C,D) \in \chpr \Rn \times
\chpr \Rp$ where
\begin{gather*}
  \begin{multlined}
    C = \cone \big( P \tensor_{\Rd{0}} \Rnn {-1} \rTo^{\subseteq} P
    \tensor_{\Rd{0}} \Rn \big) \ \oplus \ \big( Q \tensor_{\Rd{0}} \Rn
    \big) \\ %
    \qquad = i^{-}_{*} \cone (s_{-1} P \rTo P) \oplus i^{-}_{*} Q
  \end{multlined}
  \\
  \intertext{and} %
  \begin{multlined}
    D = \cone \big( P \tensor_{\Rd{0}} \Rp \rTo^{=} P \tensor_{\Rd{0}}
    \Rp \big) \ \oplus \ \big( Q \tensor_{\Rd{0}} \Rp \big) \qquad \\ \simeq
    \ %
    Q \tensor_{\Rd{0}} \Rp \ = \ i^{+}_{*} Q \ ;
  \end{multlined}
\end{gather*}
here $s_{-1}$ denotes the shift functor
\begin{displaymath}
  s_{-1} \colon Q \mapsto Q \tensor_{\Rd{0}} \Rd{-1}
\end{displaymath}
which maps the category of finitely generated projective
$\Rd{0}$\nbd-modules to itself. --- To determine the effect of the
map~$\eta$ on~$(C,D)$, recall first that weakly equivalent functors
induce homotopic maps on $K$-theory spaces. So we can, for example,
replace $D$ by~$i^{+}_{*}Q$. The first component of
$\eta\beta\alpha(P,Q)$ then is, up to homotopy,
\begin{displaymath}
  i^{-}_{*} \cone (s_{-1} P \rTo P) \,\oplus\, i_{*}^{-} Q \,\oplus\,
  \Sigma i^{-}_{*} p^{+}_{*} i^{+}_{*} Q \ .
\end{displaymath}
Since $p^{+}_{*} i^{+}_{*} Q = Q$, and since suspension represents a
homotopy inverse for direct sum \cite[Proposition~1.6.2]{MR802796}, we
can thus model the effect of $\eta\beta\alpha$ by the functor
\begin{displaymath}
  (P,Q) \mapsto \big( i^{-}_{*} \cone (s_{-1} P \rTo P) , \ %
  i^{+}_{*} Q \big) \ .
\end{displaymath}
On the level of $K$\nbd-groups, this means that the effect
of~$\eta\beta\alpha$ is described by the diagonal matrix
\begin{equation}
  \label{eq:diagonal}
  \eta\beta\alpha =
  \begin{pmatrix}
    i^{-}_{*} (\id - s_{-1}) & 0 \\
    0 & i^{+}_{*}
  \end{pmatrix}
  \colon K_{q} \Rd{0} \oplus K_{q} \Rd{0} \rTo K_{q} \Rn \oplus
  K_{q} \Rp \ .
\end{equation}

\subsection*{The kernel of $\eta\beta\alpha$}

In view of~\eqref{eq:diagonal},
$\ker(\eta\beta\alpha) = \ker \big( i^{-}_{*} (s_{-1} - \id)
\big)\oplus \ker (i^{-}_{*})$.
Now the maps $i^{\mp}_{*}$ are split injective (with left inverse
$p^{\mp}_{*}$), thus
\begin{equation}
  \label{eq:identify_ker}
  \ker(\eta\beta\alpha) \iso \ker(\sd_{*}) \oplus \{0\} %
  = \sker+q \Rd{0} \ ,
\end{equation}
where $\sd_{*}$ denotes the shift difference map
$\sd_{*} = \id - s_{-1}  \colon K_{q} \Rd{0} \rTo K_{q} \Rd{0}$
from~\eqref{eq:def_sd} with kernel $\sker+q \Rd{0}$ 
(\hglue0.7pt Definition~\ref{def:shift_kernel}).

\subsection*{The cokernel of $\eta\beta\alpha$}

From the specific representation of~$\eta\beta\alpha$
in~\eqref{eq:diagonal} we read off that
\begin{displaymath}
  \coker(\eta\beta\alpha) = \coker (i^{-}_{*} \circ \sd_{*}) \oplus
  \coker (i^{+}_{*}) \ ,
\end{displaymath}
the second summand being nothing but $\NK+q \Rd{0}$ by definition. To
identify the first summand we compose $i^{-}_{*} \circ \sd_{*}$ with
the splitting isomorphism~\eqref{eq:split-nil}:
\begin{displaymath}
  K_{q} (\Rd{0}) \rTo^{\sd_{*}} K_{q} (\Rd{0}) \rTo^{i^{-}_{*}} K_{q}
  (\Rn) \rTo[l>=4em]^{S = (c, p^{-}_{*})}_{\iso} \NK-q \oplus K_{q}
  (\Rd{0})
\end{displaymath}
(Here $c \colon K_{q} (\Rn) \rTo \NK-q$ is the canonical projection
onto the cokernel of~$i^{-}_{*}$.) Thus
$\coker (i^{-}_{*} \circ \sd_{*}) \iso \coker (S \circ i^{-}_{*} \circ
\sd_{*})$.
Since $c \circ i^{-}_{*} = 0$, and since $p^{-}_{*} \circ i^{-}_{*}$
is the identity, we conclude that this group is isomorphic to the
cokernel of the composition
\begin{displaymath}
  K_{q} \Rd{0} \rTo^{\sd_{*}} %
  K_{q} \Rd{0} \rTo^{\subseteq} %
  \NK-q \Rd{0} %
  \oplus %
  K_{q} \Rd{0} \ ,
\end{displaymath}
which is $\NK-q \oplus \scoker-q \Rd{-}$. In total, this results in an
isomorphism
\begin{equation}
  \label{eq:identify_coker}
  \coker(\eta\beta\alpha) \iso \NK-q \Rd{0} \oplus \scoker-q \Rd{0}
  \oplus \NK+q \Rd{0} \ .
\end{equation}

\medbreak

Putting the identifications~\eqref{eq:identify_ker}
and~\eqref{eq:identify_coker} of the kernel and cokernel of
$\eta\beta\alpha$ into the sequences \eqref{eq:SES1}
and~\eqref{eq:SES2} gives precisely the advertised Fundamental
Theorem~\ref{thm:fundamental}. \qed

\section{The localisation sequence}
\label{sec:loc_seq}

In this section we will establish the long exact ``localisation''
sequence of Theorem~\ref{thm:loc-and-nil}. There are in fact two
``mirror-symmetric'' versions, which we state in full for
completeness.

\begin{theorem}[``Localisation sequence'']
  \label{thm:localisation-sequence}
  Let $\R$ be a strongly $\bZ$\nbd-graded ring. There are long exact
  sequences of algebraic $K$\nbd-groups \addtocounter{equation}{-1}
  \begin{subequations}
    \begin{gather}
      \begin{multlined}
        \ldots \rTo K_{q+1} \R \rTo \nil+q (\Rd{0}) \rTo^{\phi} K_{q}
        \Rp
        \rTo K_{q} R \\[0.5em]
        \rTo \ldots \rTo \nil+0 (\Rd{0}) \rTo^{\phi} K_{0} \Rp \rTo
        K_{0} \R
      \end{multlined}
      \label{eq:loc-seq-plus}
      \\
      \intertext{and} %
      \begin{multlined}
        \ldots \rTo K_{q+1} \R \rTo \nil-q (\Rd{0}) \rTo^{\phi} K_{q}
        \Rn
        \rTo K_{q} R \\[0.5em]
        \rTo \ldots \rTo \nil-0 (\Rd{0}) \rTo^{\phi} K_{0} \Rn \rTo
        K_{0} \R
      \end{multlined}
      \label{eq:loc-seq-minus}
    \end{gather}
  \end{subequations}
  with $\phi$ induced by the forgetful functor $(Z, \zeta) \mapsto Z$
  on the category $\cT{\pm}$. The groups $\nil\pm{q}(\Rd{0})$ are as
  defined in~\eqref{eq:notation_nil+} and~\eqref{eq:notation_nil-}.
\end{theorem}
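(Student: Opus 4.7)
The plan is to apply \textsc{Waldhausen}'s fibration theorem \cite[Theorem~1.6.4]{MR802796} to the \textsc{Waldhausen} category $\ch\vb$ equipped with the two nested subcategories of weak equivalences $h_{+} \subseteq h_{0}$ (the inclusion was already noted at the start of \S\ref{sec:MV}, resting on flatness of~$\R$ as a left $\Rp$\nbd-module). The saturation, cylinder and extension axioms for~$h_{0}$ will be inherited in routine fashion from the underlying categories of chain complexes of modules. The fibration theorem will then deliver a homotopy fibration sequence
\begin{displaymath}
  |h_{+} \sdot \ch\vb^{h_{0}}| \rTo |h_{+} \sdot \ch\vb| \rTo |h_{0} \sdot \ch\vb| \ .
\end{displaymath}

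Next, I would identify the three terms with the three players of the localisation sequence, drawing only on material already at our disposal. By the map~\eqref{map:h+} of Theorem~\ref{thm:fibre-terms-are-nil}, the fibre $|h_{+} \sdot \ch\vb^{h_{0}}|$ is homotopy equivalent to $|q \sdot \cT{+}|$, and so its $q$th $K$\nbd-group is~$\nil+q(\Rd{0})$. By Lemma~\ref{lem:forget-C-D} and the variant for~$\mathbb{D}_{0}$ announced there, the forgetful functors induce homotopy equivalences $|h_{+} \sdot \ch\vb| \simeq |h_{+} \sdot \mathbb{D}_{+}|$ and $|h_{0} \sdot \ch\vb| \simeq |h_{0} \sdot \mathbb{D}_{0}|$; Lemma~\ref{lem:correct_K0} then identifies their $K$\nbd-groups with $K_{q}\Rp$ and~$K_{q}\R$, respectively, in all positive degrees. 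Unravelling the identifications shows the induced map $\nil+q(\Rd{0}) \rTo K_{q}\Rp$ to be given by $(Z,\zeta) \mapsto Z^{\zeta}$, which is precisely the map~$\phi$ from the statement. The resulting long exact sequence in algebraic $K$\nbd-theory thus yields~\eqref{eq:loc-seq-plus} in positive degrees, and the symmetric argument built on the map~\eqref{map:h-} and the corresponding identifications for~$\mathbb{D}_{-}$ yields~\eqref{eq:loc-seq-minus}.

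The main obstacle will be the bookkeeping at the $K_{0}$\nbd-tail, since Lemma~\ref{lem:correct_K0} only provides injections in degree zero. The fibration delivers the exact segment
\begin{displaymath}
  K_{1}\mathbb{D}_{0} \rTo \nil+0(\Rd{0}) \rTo K_{0}\mathbb{D}_{+} \rTo K_{0}\mathbb{D}_{0}
\end{displaymath}
which I would post-compose with the injections $K_{0}\mathbb{D}_{+} \hookrightarrow K_{0}\Rp$ and $K_{0}\mathbb{D}_{0} \hookrightarrow K_{0}\R$. Exactness at $\nil+0(\Rd{0})$ transfers immediately thanks to the said injectivity; exactness at $K_{0}\Rp$ requires the observation that any element of $\ker(K_{0}\Rp \rTo K_{0}\R)$, written in the form $[P] - [\Rp^{n}]$ with $P \tensor_{\Rp} \R$ stably free, automatically represents an element of~$K_{0}\mathbb{D}_{+}$ because stably free $\R$\nbd-modules are certainly stably induced from~$\Rn$. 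With this in hand the advertised sequences terminate exactly as stated.
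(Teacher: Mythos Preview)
Your proposal is correct and follows essentially the same route as the paper: apply \textsc{Waldhausen}'s fibration theorem to the pair of weak equivalences $h_{+} \subseteq h_{0}$ on~$\ch\vb$ (this is precisely the right-hand vertical map of the fundamental square), identify the three terms via Theorem~\ref{thm:fibre-terms-are-nil} and Lemmas~\ref{lem:forget-C-D} and~\ref{lem:correct_K0}, and patch the $K_{0}$\nbd-tail by showing that any element of $\ker(K_{0}\Rp \rTo K_{0}\R)$ already lies in the image of~$K_{0}\mathbb{D}_{+}$. Your streamlined representation $x = [P] - [\Rp^{n}]$ is a minor simplification of the paper's $x = [P] - [Q]$, and your ``unravelling'' for~$\phi$ is what the paper makes explicit via an auxiliary category $\overline{\mathbf{FD}}(\Rp)$ of $\Rp$\nbd-finitely dominated complexes of $\Rd{0}$\nbd-projectives (needed because $(Z,\zeta) \mapsto Z^{\zeta}$ does not land in $\chpr{\Rp}$ on the nose); you should expect to pass through such a category when you carry out the unravelling.
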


\begin{proof}
  We will show that the sequence~\eqref{eq:loc-seq-plus} is exact;
  the argument for sequence~\eqref{eq:loc-seq-minus} is similar.

  We apply \textsc{Waldhausen}'s fibration theorem to the right-hand
  vertical map in the fundamental square~\eqref{eq:fund}. In view of
  Lemma~\ref{lem:forget-C-D} this results in exact sequences of
  $K$-groups
  \begin{displaymath}
    \ldots \rTo K_{q+1} \mathbb{D}_{+} \rTo K_{q+1} \mathbb{D}_{0}
    \rTo %
    \pi_{q} \Omega | h_{+} \vb^{h_{0}}|
    \rTo^{k'} K_{q} \mathbb{D}_{+} \rTo K_{q}
    \mathbb{D}_{0}
  \end{displaymath}
  for $q \geq 0$. In view of Lemma~\ref{lem:correct_K0} and
  Theorem~\ref{thm:fibre-terms-are-nil}, this proves exactness of the
  sequence~\eqref{eq:loc-seq-plus} in the $q \geq 1$ range down to the
  term $K_1 \Rp$. Lemma ~\ref{lem:correct_K0} also states that in the
  commutative diagram in Fig.~\ref{fig:diag_localisation}
  \begin{figure}[ht]
    \centering
    \begin{diagram}
      K_{1} \Rp & \rTo & K_{1} \R & \rTo & 
      \pi_{0} \Omega | h_{+} \vb^{h_{0}}| & \rTo^{k} &
      K_{0} \Rp & \rTo^{j^{+}_{*}} & K_{0} \R \\
      \uTo<{=} && \uTo<{=} && \uTo<{=} && \uTo<{\subseteq}>{f} &&
      \uTo<{\subseteq}>{g} \\
      K_{1} \mathbb{D}_{+} & \rTo & K_{1} \mathbb{D}_{0} & \rTo &
      \pi_{0} \Omega | h_{+} \vb^{h_{0}}| & \rTo^{k'} & K_{0}
      \mathbb{D}_{+} & \rTo^{j'} & K_{0}
      \mathbb{D}_{0} \\
    \end{diagram}
    \caption{Diagram used to establish the tail end of the localisation sequence}
    \label{fig:diag_localisation}
  \end{figure}
  the two vertical maps labelled $f$ and~$g$ are injective. As the
  bottom row is exact, the top row is automatically exact as well
  except possibly at~$K_{0} \Rp$. Let $x \in K_{0} \Rp$ be an element
  with $j^{+}_{*}(x) = 0$. We can write $x = [P] - [Q]$, for finitely
  generated projective $\Rp$\nbd-modules $P$ and~$Q$.  The condition
  $j^{+}_{*} [P] - j^{+}_{*} [Q] = j^{+}_{*}(x) = 0 \in K_{0} \R$,
  that is, $[P \tensor_{\Rp} \R] = [Q \tensor_{\Rp} \R] \in K_{0} \R$,
  means that there exists a number $\ell \geq 0$ together with an
  isomorphism
  \begin{displaymath}
    j^{+}_{*}(P) \oplus \R^{\ell} = \big( P \tensor_{\Rp} \R \big)
    \,\oplus\, \R^{\ell} \iso \big( Q \tensor_{\Rp} \R \big) \,\oplus\,
    \R^{\ell} = j^{+}_{*}(Q) \oplus \R^{\ell} \ .
  \end{displaymath}
  Let $P'$ be a complement of~$P$ so that $P' \oplus P$ is a finitely
  generated free $\Rp$\nbd-module. Then $j^{+}_{*}(P' \oplus P) \oplus
  \R^{\ell}$ is a finitely generated free $\R$\nbd-module, and
  \begin{displaymath}
    j^{+}_{*}(P' \oplus P) \oplus \R^{\ell} = j^{+}_{*}(P') \oplus
    j^{+}_{*}(P) \oplus \R^{\ell} \iso j^{+}_{*}(P') \oplus
    j^{+}_{*}(Q) \oplus \R^{\ell} = j^{+}_{*}(P' \oplus Q) \oplus \R^{\ell}
  \end{displaymath}
  so that $j^{+}_{*}(P' \oplus Q) \oplus \R^{\ell}$ is a finitely
  generated free $\R$ module as well. Consequently, both
  $j^{+}_{*}(P' \oplus P)$ and $j^{+}_{*} (P' \oplus Q)$ stably extend
  to~$\Rn$; thus the two diagrams
  \begin{displaymath}
    p = \Big( j^{+}_{*}(P' \oplus P) \lTo P' \oplus P \Big) %
    \qquad \text{and} \qquad
    q = \Big( j^{+}_{*}(P' \oplus Q) \lTo P' \oplus Q \Big) %
  \end{displaymath}
  are objects of~$\mathbb{D}_{+}$. Let us consider the element
  $z = [p]-[q] \in K_{0} \mathbb{D}_{+}$. We calculate
  \begin{displaymath}
    f(z) = f \big( [p]-[q] \big) = [P' \oplus P] - [P' \oplus Q] =
    [P]-[Q] = x
  \end{displaymath}
  (see Lemma~\ref{lem:correct_K0} for the effect of~$f$). As
  $x \in \ker(j^{+}_{*})$, and as $g$ is an injection, this implies
  $z \in \ker(j')$, and by exactness of the lower horizontal sequence
  we infer that $z = k'(y)$ for some
  $y \in \pi_{0} \Omega | h_{+} \vb^{h_{0}}|$. Then
  $k(y) = fk'(y) = f(z)=x$ so that $x \in \im k$. This proves the top
  row to be exact at $K_{0} \Rp$, and establishes together with
  Theorem~\ref{thm:fibre-terms-are-nil} the tail end of the long exact
  sequence.

  \medbreak

  \newcommand{\bfdr}{\overline{\mathbf{FD}}}

  It remains to identify the map~$\phi$ in the
  sequence~\eqref{eq:loc-seq-plus}. To this end, let $\bfdr(\Rp)$
  denote the category of bounded complexes of $\Rp$\nbd-modules
  which are quasi-iso\-mor\-phic to a bounded complex of finitely
  generated projective $\Rp$\nbd-mod\-ules, and consist of projective
  $\Rd{0}$\nbd-mod\-ules. The inclusion functor
  \[\chpr{\Rp} \rTo \bfdr(\Rp)\] is exact and yields isomorphisms on algebraic
  $K$\nbd-groups (with respect to weak equivalences the
  quasi-isomorphisms, and cofibrations the mono\-mor\-phisms with
  levelwise $\Rd{0}$-projective cokernel) as can be checked with the
  help of \textsc{Waldhausen}'s approximation theorem. This inclusion
  is the right hand vertical map in the square diagram of
  Fig.~\ref{fig:loc-seq}.
  \begin{figure}[ht]
    \centering
    \begin{diagram}
      \ch \vb^{h_{0}} & \rTo & \chpr{\Rp} \\
      \dTo && \dTo<{\subseteq} \\
      \cT{+} & \rDashto[l>=3em] & \bfdr(\Rp)
    \end{diagram}
    \caption{Diagram used in proof of localisation sequence}
    \label{fig:loc-seq}
  \end{figure}
  The top horizontal arrow maps the complex of vector bundles~$\cZ$ to
  its component~$Z^{+}$. The left-hand vertical map is the one
  discussed in Theorem~\ref{thm:fibre-terms-are-nil}, identifying the
  group $\pi_{q} \Omega | h_{+} \vb^{h_{0}}|$ with $\nil+q(\Rd{0})$;
  it is given by the assignment
  \begin{displaymath}
    \cZ = \Big( Z^{-} \rTo Z^{0} \lTo^{\zeta^{+}} Z^{+} \Big) \
    \mapsto \ (Z^{+}, \zeta)
  \end{displaymath}
  with $\zeta$ the composition
  $Z^{+} \tensor_{\Rd{0}} \Rd{1} \iso Z^{+} \tensor_{\Rp} \Rpn{1} \rTo
  Z^{+} \tensor_{\Rp} \Rp \iso Z^{+} \tensor_{\Rd{0}} \Rd{0}$.
  Note that $Z^{+}$ consists of projective $\Rd{0}$\nbd-modules since
  $\R$ is \textit{strongly} $\bZ$\nbd-graded, that $Z^{+}$ is
  $\Rd{0}$\nbd-finitely dominated by Lemma~\ref{lem:R-acyclyc-R0-fd},
  and that $\zeta$ is homotopy nilpotent by
  Corollary~\ref{cor:nilpotent_iff_acyclic}. Using the functor
  $(Z, \zeta) \mapsto Z^{\zeta}$ in the lower horizontal position
  renders the square diagram commutative; the complex $Z^{\zeta}$ is
  indeed an object of~$\bfdr(\Rp)$ by Remark~\ref{rem:nil-cat}. The
  induced map on algebraic $K$\nbd-groups is the map~$\phi$ occurring
  in the sequence~\eqref{eq:loc-seq-plus}. --- The proof is complete.
\end{proof}

\section{The $K$-theory of homotopy nilpotent twisted endomorphisms}

The nil groups $\NK\mp 0 \Rd{0}$ can be interpreted as obstruction
groups for finitely generated projective modules over $\Rn$ or~$\Rp$,
respectively, to be stably induced from~$\Rd{0}$
(Proposition~\ref{prop:stably_induced_module}). For $q>0$ the groups
$\NK\pm q \Rd{0}$ are isomorphic to the (reduced) algebraic
$K$\nbd-groups of the category of nilpotent twisted endomorphisms, as
will be shown in this section. This will also complete the proof of
Theorem~\ref{thm:loc-and-nil}.

\begin{theorem}[Nil groups and $K$-theory of nilpotent endomorphisms]
  \label{thm:NK_is_nil}
  For $q>0$ there are natural isomorphisms of algebraic $K$\nbd-groups
  \addtocounter{equation}{-1}
  \begin{subequations}
    \begin{gather}
      \label{eq:nk+} %
      \NK+q (\Rd{0}) \rTo^{\iso} \ker \Big( o^{-}_{*} \colon
      \nil-{q-1} \Rd{0} \rTo %
      K_{q-1} \Rd{0} \Big) \\
      \intertext{and} %
      \label{eq:nk-} %
      \NK-q (\Rd{0}) \rTo^{\iso} \ker \Big( o^{+}_{*} \colon
      \nil+{q-1} \Rd{0} \rTo %
      K_{q-1} \Rd{0} \Big) \ ,
    \end{gather}
  \end{subequations}
  with the groups $\nil\mp {q-1} \Rd{0}$ from~\eqref{eq:notation_nil+}
  and~\eqref{eq:notation_nil-}, and maps $o^{\mp}_{*}$ as introduced
  in~\eqref{eq:maps-o} induced by the forgetful functors
  $o^{\mp} \colon (Z,\zeta) \mapsto Z$.
\end{theorem}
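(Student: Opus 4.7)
\medbreak

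\noindent\textit{Proof plan.} The strategy is to derive~(\ref{eq:nk+}) and~(\ref{eq:nk-}) from the long exact sequence attached to a different edge of the fundamental square~(\ref{eq:fund}) than the one used for Theorem~\ref{thm:localisation-sequence}, and then to pin down the resulting image inside $\nil\mp{q-1}(\Rd{0})$ by an explicit computation using the homotopy inverse~$\Xi$ from Lemma~\ref{lem:homotopy_inverse_alpha}.

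First, I apply \textsc{Waldhausen}'s fibration theorem to the top horizontal map $h\sdot\cC \rTo h_{+}\sdot\cC$ of~(\ref{eq:fund}). By the symmetric version~(\ref{map:h_alt}) of Theorem~\ref{thm:fibre-terms-are-nil} the homotopy fibre of this map is equivalent to $K(\cT{-})$; combining this with Lemma~\ref{lem:forget-C-D}, Lemma~\ref{lem:correct_K0} and Theorem~\ref{thm:HM} yields, for $q > 0$, a long exact sequence
\begin{displaymath}
  \ldots \rTo K_q(\pp) \rTo^{\beta_{+}} K_q(\Rp) \rTo^{\partial}
  \nil-{q-1}(\Rd{0}) \rTo^{g^{-}} K_{q-1}(\pp) \rTo^{\beta_{+}} K_{q-1}(\Rp)
  \rTo \ldots
\end{displaymath}
in which $\beta_{+}$ is the second component of the \textsc{Mayer}-\textsc{Vietoris} map~$\beta$. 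The matrix computation of $\beta\circ\alpha$ performed in the proof of Theorem~\ref{thm:fundamental} gives $\im(\beta_{+}) = i^{+}_{*}(K_q(\Rd{0}))$, so exactness at $K_q(\Rp)$ forces~$\partial$ to descend to an injection $\NK+q(\Rd{0}) \hookrightarrow \nil-{q-1}(\Rd{0})$ whose image is exactly $\ker(g^{-})$.

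The main task is then to prove the equality $\ker(g^{-}) = \ker(o^{-}_{*})$, which I plan to do by computing $\alpha^{-1}\circ g^{-}$ explicitly. For $(Z,\zeta)\in\cT{-}$, the (symmetric version of the) lifting construction from the proof of Theorem~\ref{thm:fibre-terms-are-nil} produces a vector bundle $\cY = (Y^{-} \rTo Y^{0} \lTo Y^{+})$ with $Y^{-} \simeq Z^\zeta$ and both $Y^{0}$ and~$Y^{+}$ acyclic, so that $g^{-}[Z,\zeta] = [\cY]$ in~$K_{q-1}(\pp)$. Exploiting the defining short exact sequence of~$\Gamma$ from Definition~\ref{def:cohomology}, the vanishing of acyclic complexes of projectives in \textsc{Waldhausen} $K$\nbd-theory, and the strong-grading bimodule isomorphism $\Rnn{1} \iso \Rn \tensor_{\Rd{0}} \Rd{1}$, one computes in $K_{q-1}(\Rd{0})$
\begin{displaymath}
  [\Gamma\cY] = [Z]
  \qand
  [\Gamma\cY(1,0)] = [\Gamma\cY(1,-1)] = s_{1}[Z] \ .
\end{displaymath}
Plugging these classes into the formula for~$\Xi$ from Lemma~\ref{lem:homotopy_inverse_alpha} causes the second coordinate to collapse and produces $\alpha^{-1}(g^{-}[Z,\zeta]) = ([Z],\,0) = (o^{-}_{*}[Z,\zeta],\,0)$; hence $\ker(g^{-}) = \ker(\alpha^{-1}g^{-}) = \ker(o^{-}_{*})$, which proves~(\ref{eq:nk+}).

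The statement~(\ref{eq:nk-}) follows by the mirror argument using the left vertical edge $h\sdot\cC \rTo h_{-}\sdot\cC$ of~(\ref{eq:fund}) and~(\ref{map:h}) in place of~(\ref{map:h_alt}); the analogous bookkeeping gives $\alpha^{-1}(g^{+}[Z,\zeta]) = (s_{1}[Z],\,[Z]-s_{1}[Z])$, and its kernel still coincides with $\ker(o^{+}_{*})$ because $s_{1}$ is an automorphism of~$K_{q-1}(\Rd{0})$. The main technical obstacle is the explicit bookkeeping in the third step: verifying, via restriction of scalars along $\Rd{0} \subseteq \Rn$ together with the bimodule isomorphism for~$\Rnn{1}$ (and its mirror for~$\Rpn{1}$), that the twisted global sections of the lifted vector bundle really reduce to the stated shifts of~$[Z]$ in~$K_{q-1}(\Rd{0})$, so that the formula from Lemma~\ref{lem:homotopy_inverse_alpha} collapses to a pair whose kernel is cut out by the single condition $o^{-}_{*}[Z,\zeta] = 0$.
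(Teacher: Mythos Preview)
Your proposal is correct and follows essentially the same route as the paper: apply \textsc{Waldhausen}'s fibration theorem to the top horizontal edge of the fundamental square, identify the fibre with $\cT{-}$ via~(\ref{map:h_alt}), observe that the image of $\beta_{+}\circ\alpha$ is $i^{+}_{*}(K_q\Rd{0})$, and then compute the composite $\alpha^{-1}\circ g^{-}$ using the explicit homotopy inverse~$\Xi$ to see that it lands in the first coordinate as~$o^{-}_{*}$. The only differences are cosmetic: the paper restricts to the subcategory~$\vb_{0}$ before invoking~$\Xi$ (since $\Xi$ is only defined there and $\Gamma$ is only exact there --- a point you gloss over when appealing to the ``defining short exact sequence of~$\Gamma$''), and for the mirror case the paper replaces~$\alpha$ by the symmetric splitting built from $\Psi_{0,0}$ and~$\Psi_{0,-1}$ so that the answer comes out as $([Z],0)$ rather than your $(s_{1}[Z],\,[Z]-s_{1}[Z])$, but as you correctly note the kernels coincide since $s_{1}$ is invertible.
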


\begin{proof}
  Upon taking the homotopy fibre of the top horizontal map in the
  fundamental square~\eqref{eq:fund} we obtain a long exact sequence
  of $K$-groups containing the snippet
  \begin{displaymath}
    K_{q} \pp \rTo K_{q} \mathbb{D}_{+} \rTo^{\nabla} K_{q-1} \vb^{h_{+}}
    \rTo^{\iota} K_{q-1} \pp \rTo K_{q-1} \mathbb{D}_{+} 
    \ ,
  \end{displaymath}
  with $\iota$ induced by the inclusion functor
  $\ch \vb^{h_{+}} \rTo \ch \vb$; the symbol~$\nabla$ stands for a
  connecting homomorphism in the long exact sequence of homotopy
  groups associated with the fibration. --- Using the isomorphism
  $\alpha \colon K_{q} \Rd{0} \oplus K_{q} \Rd{0} \rTo K_{q} \pp$
  from~\eqref{eq:modified_iso}, and with $K_{k} (\Rp)$ in place
  of~$K_{k} \mathbb{D}_{+}$ (Lemma~\ref{lem:correct_K0}), we obtain
  the exact sequence
  \begin{multline*}
    \quad\qquad %
    K_{q} \Rd{0} \oplus K_{q} \Rd{0} \rTo^{\epsilon} K_{q} \Rn %
    \rTo^{\nabla} K_{q-1} \vb^{h_{+}} \\ %
    \rTo^{\alpha\inv\iota} K_{q-1} \Rd{0} \oplus K_{q-1} \Rd{0} %
    \rTo^{\epsilon} K_{q-1} \Rn \ . %
    \quad\qquad %
  \end{multline*}
  The map $\epsilon$ is the difference of maps induced by the functors
  \begin{displaymath}
    (P,Q) \mapsto \big( P \tensor_{\Rd{0}} \Rp \big) \,\oplus\, %
    \big( Q \tensor_{\Rd{0}} \Rp \big) %
    \qquad \text{and} \qquad %
    (P,Q) \mapsto P \tensor_{\Rd{0}} \Rp \ ,
  \end{displaymath}
  thus $\epsilon$ is the usual split injection induced from
  $i^{+} \colon Q \mapsto Q \tensor_{\Rd{0}} \Rp$ on the second
  summand, and is the zero map on the first. Hence the image of the
  map $\alpha\inv\iota$, which equals the kernel of~$\epsilon$, is
  $K_{q-1} \Rd{0} \oplus \{0\}$, and there results another exact
  sequence
  \begin{displaymath}
    0 \rTo \{0\} \oplus K_{q} \Rd{0} \rTo^{i^{+}_{*}} K_{q} \Rp
    \rTo^{\nabla} K_{q-1} \vb^{h_{+}}
    \rTo^{\mathrm{pr}_{1}\alpha\inv\iota} K_{q-1} \Rd{0} \rTo 0 \ .
  \end{displaymath}
  By Theorem~\ref{thm:fibre-terms-are-nil} we have an isomorphism
  $\omega \colon K_{q-1} \vb^{h_{+}} \rTo^{\iso} K_{q-1} \cT{-}$.

  \begin{equation}
    \label{eq:claim}
    \begin{gathered}
      \text{\textit{Claim:\/} The composition
        $\mathrm{pr}_{1}\alpha\inv\iota \omega\inv$ is induced \qquad\qquad}
      \\
      \text{by the
        forgetful functor $o^{-} \colon (Y, \upsilon) \mapsto Y$.\quad}
    \end{gathered}
  \end{equation}

  Assuming the claim for the moment, we obtain an exact sequence
  \begin{displaymath}
    K_{q} \Rd{0} \rTo^{i^{+}_{*}} K_{q} \Rp \rTo^{\omega\nabla} K_{q-1}
    \nil-{q-1} \Rd{0} \rTo^{o^{-}} K_{q-1} \Rd{0} \ ,
  \end{displaymath}
  giving the isomorphism
  \begin{displaymath}
    \NK+q = \coker i^{+}_{*} = K_{q} \Rp / \ker(\omega\nabla) \iso \im
    (\omega\nabla) = \ker(o^{-}) \ .
  \end{displaymath}
  This establishes~\eqref{eq:nk+}. The isomorphism of~\eqref{eq:nk-}
  is verified using a symmetric argument, employing the isomorphism
  \begin{equation}
    \begin{multlined}
      \qquad \qquad K_{q} \Rd{0} \oplus K_{q} \Rd{0} \rTo K_{q} \pp %
      \ , \\ %
      \big( [P], [Q] \big) \mapsto \big[ \Psi_{0,0} (P) \big] - \big[
      \Psi_{0,-1} (P) \big] %
      + \big[ \Psi_{0,0} (Q) \big] \ , \qquad \qquad
    \end{multlined}
  \end{equation}
  in place of~$\alpha$.

  \smallbreak

  It remains to verify Claim~\eqref{eq:claim}. By
  Lemma~\ref{eq:functor_A} the isomorphism $\alpha\inv$ is induced by
  the functor
  \begin{displaymath}
    \Xi \colon \vb_{0} \rTo \chp \times \chp \ , \ \cY \mapsto
    \begin{pmatrix}
      \Sigma \Gamma \cY(1,0) \,\oplus\, \Gamma \cY \,\oplus\, \Gamma
      \cY(1,-1) \\
      \Sigma \Gamma \cY(1,-1) \,\oplus\, \Gamma \cY(1,0) \ .
    \end{pmatrix}
  \end{displaymath}
  Now consider the diagram in Fig.~\ref{fig:nil}. It is not a
  commutative diagram of functors, but on the level of $K$\nbd-theory
  spaces it is homotopy commutative. Since all vertical arrows marked
  as inclusion maps induce identity maps on $K$\nbd-groups, and since
  $\omega$ induces an isomorphism on $K$\nbd-groups, this establishes
  Claim~\eqref{eq:claim}. --- %
  \begin{figure}[ht]
    \centering
    \begin{diagram}
      \vb^{h_{+}}_{0} & \rTo[l>=2em]^{\iota} & \vb_{0} &
      \rTo[l>=4em]^{\mathrm{pr}_{1} \Xi} & \chp \\ %
      \dTo>{\subseteq}<{a} && \dTo>{\subseteq}<{b} \\
      \vb^{h_{+}} & \rTo[l>=2em]^{\iota} & \vb && \dTo>{\subseteq}<{i}
      \\%
      \dTo<{\omega} &&&& \\ %
      \cT{-} && \rTo^{o^{-}} && \fdr(\Rd{0}) %
    \end{diagram}
    \caption{Diagram used to establish Claim~\eqref{eq:claim}}
    \label{fig:nil}
  \end{figure}
  \newcommand{\holim}{\mathrm{holim}}%
  The vertical arrows $a$ and~$b$ result in homotopy equivalences on
  $K$\nbd-theory spaces with respect to $h$\nbd-equivalences; this is
  analogous to Lemma~\ref{lem:reduce_to_vb0}), and the arguments of
  Lemma~III.1.3 and Corollary~III.1.4 of~\cite{P1} carry over
  verbatim. For the arrow~$i$ see~\eqref{eq:FD}. As to the alleged
  homotopy commutativity, recall that for $\cZ \in \vb_{0}$ we have
  $\invlim{}\!^{1} \cZ(k,\ell) = H^1 \cZ(k,\ell) = 0$ when
  $k+\ell \geq 0$, by definition of the category ~$\vb_{0}$;
  consequently, the map $\Gamma\cZ(k,\ell) \rTo \holim \cZ(k,\ell)$ is
  a quasi-isomorphism when $k+\ell \geq 0$, where ``$\holim$'' stands
  for the homotopy limit or homotopy pullback construction, which is
  the dual of the double mapping cylinder. Furthermore, for
  $\cZ \in \vb^{h_{+}}_{0}$ we have
  $\cZ(k,\ell)^{+} = Z^{+} \tensor_{\Rp} \Rpn{-\ell} \simeq *$ and
  $\cZ(k,\ell)^{0} = Z^{0} \tensor_{\R} \R \simeq *$, which for
  $k+\ell \geq 0$ implies
  \begin{multline*}
    \Gamma\cZ(k,\ell) \simeq \holim \big( Z^{-} \tensor_{\Rn} \Rnn k
    \rTo Z^{0} \tensor_{\R} \R \lTo Z^{+} \tensor_{\Rp} \Rpn {-\ell}
    \big) \\ %
    \simeq \holim \big( Z^{-} \tensor_{\Rn} \Rnn k \rTo * \lTo * \big)
    \simeq Z^{-} \tensor_{\Rn} \Rnn k \ .
  \end{multline*}
  Thus the composition $i \mathrm{pr}_1 \Xi\,\iota$, that is, the
  functor that sends $\cY \in \vb_{0}^{h_{+}}$ to
  \begin{displaymath}
   \Sigma \Gamma \cY(1,0) \,\oplus\, \Gamma \cY \,\oplus\, \Gamma
      \cY(1,-1) \ ,
  \end{displaymath}
  is weakly equivalent to the functor that sends $\cY$ to
  \begin{displaymath}
    \big( \Sigma Y^{-} \tensor_{\Rn} \Rnn 1 \big) %
    \ \oplus \ Y^{-} \ \oplus \ %
    \big( Y^{-} \tensor_{\Rn} \Rnn 1 \big) \ ;
  \end{displaymath}
  by the additivity theorem, the induced map on $K$\nbd-theory spaces
  is homotopic to the map induced by $\cY \mapsto Y^{-}$ since
  suspension models an inverse for the $H$\nbd-space structure. But
  the formula $\cY \mapsto Y^{-}$ describes the effect
  of~$o^{-} \circ \omega$ as well. This finishes the proof.
\end{proof}

\raggedright


\newcommand{\etalchar}[1]{$^{#1}$}

\end{document}